\journal{}
\newtheorem{theorem}{Theorem}
\newtheorem{lemma}[theorem]{Lemma}
\newtheorem{example}[theorem]{Example}
\newtheorem{definition}[theorem]{Definition}
\newtheorem{remark}{Remark}
\newtheoremstyle{algstyle}%
  {10mm}       % measure of space to leave above the theorem. E.g.: 3pt
  {10mm}       % measure of space to leave below the theorem. E.g.: 3pt
  {\tt}   % name of font to use in the body of the theorem
  {0pt}        % measure of space to indent
  {\bfseries}  % name of head font
  {\newline}   % punctuation between head and body
  {10mm}       % space after theorem head
  {\thmname{#1}\thmnumber{ #2}\thmnote{ (#3)}}          
\theoremstyle{algstyle}
\newtheoremstyle{algdashstyle}%
  {10mm}       % measure of space to leave above the theorem. E.g.: 3pt
  {10mm}       % measure of space to leave below the theorem. E.g.: 3pt
  {\tt}   % name of font to use in the body of the theorem
  {0pt}        % measure of space to indent
  {\bfseries}  % name of head font
  {\newline}   % punctuation between head and body
  {10mm}       % space after theorem head
  {\thmname{#1}\thmnumber{ #2}$'$\thmnote{ (#3)}}          % Manually specify head
\theoremstyle{algdashstyle}
\newcommand{\bbmatrix}[1]{%
\begin{bmatrix} #1 \end{bmatrix}%
}
\newcommand{\ppmatrix}[1]{%
\begin{pmatrix} #1 \end{pmatrix}%
}
\newcommand{\V}{\mbox{$\cal V$}} 
\newcommand{\F}{\mbox{$\cal F$}} 
    \newcommand{\0}{{\mathbf 0}}        
\newcommand{\A}[0]{{\cal A}}      						%new   	\cal A
\newcommand{\C}[0]{{\cal C}}      						%new   	\cal C    
\newcommand{\T}[0]{{\cal T}}      						%new   	\cal C    
\newcommand{\D}[0]{{\cal D}}                    	%         \cal D
\newcommand{\e}{\mbox{$\bf e$}} 
\newcommand{\G}[0]{{\cal G}}                       %        \cal G
\newcommand{\K}[0]{{\cal K}}                       %        \cal K
\newcommand{\M}{\mbox{$\cal M$}}
\newcommand{\N}[0]{{\cal N}}    							%new     \cal N
\newcommand{\B}{\mbox{${\cal B}$}}  				%new      \cal B
\newcommand{\Reg}{\mbox{${\cal R}$}} 				%new \bf  R_P
\newcommand{\U}{\mbox{${\bf u}$}}             		%new  \bf u
\begin{document}

\begin{frontmatter}

%% Title, authors and addresses

%% use the tnoteref command within \title for footnotes;
%% use the tnotetext command for the associated footnote;
%% use the fnref command within \author or \address for footnotes;
%% use the fntext command for the associated footnote;
%% use the corref command within \author for corresponding author footnotes;
%% use the cortext command for the associated footnote;
%% use the ead command for the email address,
%% and the form \ead[url] for the home page:
%%
%% \title{Title\tnoteref{label1}}
%% \tnotetext[label1]{}
%% \author{Name\corref{cor1}\fnref{label2}}
%% \ead{email address}
%% \ead[url]{home page}
%% \fntext[label2]{}
%% \cortext[cor1]{}
%% \address{Address\fnref{label3}}
%% \fntext[label3]{}

%\title{Analysis of Linear Dynamical Systems without using State Space Representation}
\title{Higher dimensional electrical circuits and the matroid dual of a nonplanar graph}
%% use optional labels to link authors explicitly to addresses:
%% \author[label1,label2]{<author name>}
%% \address[label1]{<address>}
%% \address[label2]{<address>}

\author[hari]{Hariharan Narayanan}
\ead{hariharan.narayanan@tifr.res.in}
\author[hn]{H. Narayanan \corref{cor2}}
\ead{hn@ee.iitb.ac.in}
\cortext[cor2]{Corresponding author}
\address[hari]{School of Technology and Computer Science, Tata Institute of Fundamental Research}
\address[hn]{Department of Electrical Engineering, Indian Institute of Technology Bombay}

\begin{abstract}
In this paper we describe a physical problem,  based on electromagnetic fields, 
whose topological constraints are higher dimensional versions of Kirchhoff's laws, involving
$2-$ simplicial complexes embedded in $\mathbb{R} ^3$ rather than graphs. However, we show that, for the   skeleton of this complex, involving only triangles  and edges, we can build a matroid dual
 which is a graph. On this graph we build an `ordinary'  electrical 
circuit, solving which we obtain  the solution to our original problem.
Construction of this graph  is through a `sliding' algorithm which simulates sliding on the surfaces 
of the  triangles, moving from one triangle to another which shares an edge with it
but which also is adjacent with respect to the embedding  of the complex in $\mathbb{R} ^3.$
For this purpose, the only information needed is the order in which we encounter
the triangles incident at an edge, when we rotate say clockwise with respect to the 
orientation of the edge.
The dual graph construction is linear time on the size of the $2-$ complex.
%Our work uses a  well known algebraic topological result 
%that if the union of simplices of a complex is contractible then the complex is acyclic,
%but is otherwise self contained.
%Except for this result, which we accept without proof, 
%the paper is entirely self contained and requires only familiarity with linear algebra
%from the reader. 

%A contractible space is precisely one with the homotopy type of a point. It follows that all the homotopy groups of a contractible space are trivial. Therefore any space with a nontrivial homotopy group cannot be contractible. Similarly, since singular homology is a homotopy invariant, the reduced homology groups of a contractible space are all trivial.

%Theorem 19.5. If $f: |k| \rightarrow |L|,$ is a homotopy equivalence, then $f_*,$ is an
%isomorphism. In particular, if $|K|$ is contractible, then K is acyclic.
\end{abstract}

\begin{keyword}
Simplicial complex, Kirchhoff's laws, Matroid dual, Nonplanar graph
%% keywords here, in the form: keyword \sep keyword

%% MSC codes here, in the form: \MSC code \sep code
%% or \MSC[2008] code \sep code (2000 is the default)
\MSC  15A03, 05B35, 05C10, 05C50, 05C62, 05C85, 57N80

\end{keyword}

\end{frontmatter}

%%
%% Start line numbering here if you want
%%
%\linenumbers

%% main text
%\section{Extended Summary}
\section{Introduction}
Kirchhoff's laws for electrical networks state that the net current leaving a node is zero
(KCL) and the algebraic sum of the voltages around a loop is zero (KVL).
This topological model for electrical networks
has proved enormously useful both for theoretical studies and for practical computations.
Interest in these ideas in other areas of research is growing (see, for instance,
\cite{christiano}, \cite{madry}, 
\cite{ST}, 
\cite{spielman1}, \cite{spielman2}). 
%\cite{vanderschaft}, 
%\cite{ST}).
It is therefore pertinent to explore whether there exist variations of this model,
which share essential characteristics with it.
In our opinion these essential characteristics are 
\begin{itemize}
\item that the spaces of vectors which satisfy Kirchhoff's voltage law and
Kirchhoff's current law are complementary orthogonal (Tellegen's Theorem \cite{belevitch}, \cite{book}, \cite{tellegen}, \cite{seshu});
\item that the preprocessing, needed for ease of solution, of constraints arising from these laws and from the device characteristic (eg. Ohm's law) can be done far more efficiently than if they
are treated as merely linear algebraic constraints.
\end{itemize}

Early work in the spirit of this paper, but which leads essentially to a graph based model,
is available in \cite{kron}. The graph based model was studied rigorously
for the first time in \cite{seshu}.
Both the general case where the network is regarded as a pair of complementary orthogonal
spaces with no connection to complexes and the case where there is an underlying graph have been treated
in \cite{book}.
This paper is about other situations where both are satisfied but which do not appear to have been considered
in the literature with the point of view of efficiency of equation formulation
as well as of preprocessing for ease of solution.

Three dimensional versions of Kirchhoff's laws have already been studied, for instance,
 in the case of magnetic circuits (for basic ideas see \cite{deltoro}) and
in the case of electromagnetic fields (for a  general and comprehensive description see \cite{branin1}, \cite{branin2}). However these problems
can be reduced to the case of graphs by a process of `cell duality'. Suppose we decompose
a large tetrahedron which encloses the region of interest into smaller tetrahedra
which intersect each other only in mutual faces.
 The relationship between these smaller tetrahedra can be captured by replacing each of these latter
by a node and joining tetrahedra which share a triangle by an edge.
This resulting graph could be called the cell dual of the original three dimensional complex.

Two dimensional versions of Kirchhoff's laws, which is the subject of this paper,  have features which appear essentially
different from the above. 
%These too have been considered in the literature \cite{vanderschaft1}, \cite{vanderschaft2}. 
In order to describe the work in this paper and also to clearly differentiate it
from that available in the literature,
we begin by describing a possible generalization.
%and later improve on it by imposing mild additional
%conditions.
An $n-$complex (see Section \ref{prelim}) is made up of a series of $(j,j-1)$ skeletons, $0<j\leq n,$
each of which is made up of oriented $j-$cells and $(j-1)-$cells. The incidence relationship of this skeleton is captured by a 
$j-$coboundary matrix
with columns and rows corresponding to $j-$cells and  $(j-1)-$cells respectively. (A $(1,0)$ skeleton, for instance,
is a graph and its $1-$coboundary matrix is the incidence matrix of the graph.) 
%In addition, the product of successive
%coboundary matrices reduces to a zero matrix. 
With each $(j,j-1)$ skeleton one can associate a pair of complementary
orthogonal spaces. These are simply the row space of the $j-$coboundary matrix, i.e., the $j-$coboundaries,  and the space of
vectors orthogonal to the rows of the matrix, i.e., the $j-$cycles. If $A$ is the coboundary matrix, these are 
the space of vectors $y^T = \lambda ^TA $ and the space $x^TA^T=0.$ 
In \cite{vanderschaft1}, \cite{vanderschaft2}, higher dimensional electrical networks are defined
based on such skeletons, in the process unifying physical situations involving heat as well as electromagnetic fields.
Here, the first characteristic mentioned
above, namely complementary orthogonality, is obviously satisfied, but, in general, not the second characteristic of ease of writing and of preprocessing equations. 
%In \cite{vanderschaft1}, \cite{vanderschaft2}, higher dimensional electrical networks are defined
%based on such skeletons, but since their concerns are not algorithmic they 
%In the case of graphs, of course, both are satisfied
%and we get the rich topological theory of electrical networks. 

In this paper, we consider a new situation where both characteristics are satisfied.
This involves $2-$complexes embedded in $\mathbb{R}^3,$ and, more generally, $(n-1)-$complexes embedded in $\mathbb{R}^n.$
The generalizations
of Kirchhoff's laws pertaining to these complexes that we use, 
are in the form of solution spaces of homogeneous `physical' equations.   
The complementary orthogonality is not obvious  but has to be derived.
Our approach reveals the connection of these ideas to contractibility of the underlying
space, complete unimodularity of the relevant vector spaces etc.
In addition, one of the main results of this 
paper is that we can build, in linear time on the size of the problem, 
an electrical (matroid) dual of the  relevant $2-$ complex that turns out to be  graph based.
The solution of this graph based electrical network yields the solution to the 
original $2-$complex based electrical network.

To motivate our study, we consider the problem of computing the magnetic intensity $H$ and the magnetic flux density $B$
in a three dimensional region when known current source loops exist in specified physical
locations. [Equivalent problems arise in situations involving  
electric field intensity $E$ and current density $J,$ and static problems involving temperature and heat.]

The relevant Maxwell's equations in the integral form are:
$$ \oint_C H.dl = \int_SJ.ds;\ \ \ \ \oint B.ds =0.$$
Here the surface $S$ is bounded by the contour $C.$ A clockwise traversal 
around it in, say, the  plane of the paper, would mean that the $ds$ vector 
on the right side of the first equation is directed into the plane.
The path integral, $ \int_P H.dl$  along a directed path $P,$ is called the magnetomotive force (mmf)
across the path.
The first equation states that the net mmf  around the 
contour is equal to the net current  passing through any surface bounded by the
contour.
The second equation states that the net flux leaving any closed surface is zero.
The material property is captured by a relation 
$$ B=\mu H.$$
This relates the two vectors through the `permeability' $\mu$ which could vary from point to point.
Suppose we have a cylinder with a cross sectional area $A,$ length $l$
and uniform permeability $\mu$ then the net flux $\phi$ passing through the cylinder 
in the direction of the axis is related to the mmf $m'$ along the axis by the equation
$$ m' = [\frac{l}{A\mu}]\phi;$$
The quantity $\frac{l}{A\mu}$ is called the reluctance of the cylinder in the direction of the axis.
%\ \ \ \ \mbox{mmf = reluctance} \times \phi.$$

In our problem, the medium is assumed to be composed of high permeability material embedded with 
thin layers of varying low permeability. Such a  situation might arise, 
for instance, when magnetic material develops cracks due to degradation.
This problem can be solved by solving partial differential equations over three dimensional
regions but that method provides poor insight while being computationally intensive.
We however, choose to model this essentially as a two dimensional complex embedded in $3-$space.
%as a three dimensional complex in which the interiors of the $3-$cells
%have zero reluctance and zero conductance and carry neither mmf
%nor electric current.
%On the other hand, the interiors of the $2-$cells carry no electric current
%but may carry mmf. Lastly the $1-$skeleton may carry electric current but
% flux and mmf are zero owing to dimensionality considerations.

We  describe here the general discretized version of this  problem and later 
in Section \ref{sec:simple}, discuss an elementary instance.
In the general discretized version, we may imagine a bounded tetrahedral region in $\mathbb{R}^3,$
 being decomposed into smaller tetrahedra. Except for some previously specified facets (triangles) of these tetrahedra, the permeability
everywhere may be taken to be infinite. 
As an idealization, interiors of the tetrahedra
have zero reluctance and zero conductance and carry neither mmf
nor electric current,
the triangular facets may be taken to have 
zero thickness but can have a positive mmf across (normal) to them and therefore a positive reluctance
along the normal, 
the interior of the triangles may be taken to have no currents while
some of the boundaries of triangles may carry loops of currents,
and except for the triangles specified, the reluctance everywhere may be taken to be zero.

The constraints are 
\begin{itemize}
\item the net flux leaving a closed surface  (i.e., the boundary of a bounded region) is zero;
\item the net mmf,  through the set of triangles incident at an edge, taken say clockwise around the edge
with respect to its direction, is equal to the current through the edge in that direction (see Figure 
\ref{fig:trianglesedge1}); this constraint arises from the Maxwell equation 
$ \oint_C H.dl = \int_SJ.ds,$ where there is nonzero contribution to the integral in the left hand  side
only when the contour crosses a triangle.

\begin{figure}
 \label{fig:trianglesedge1}
\centering
 \includegraphics[width=2in]{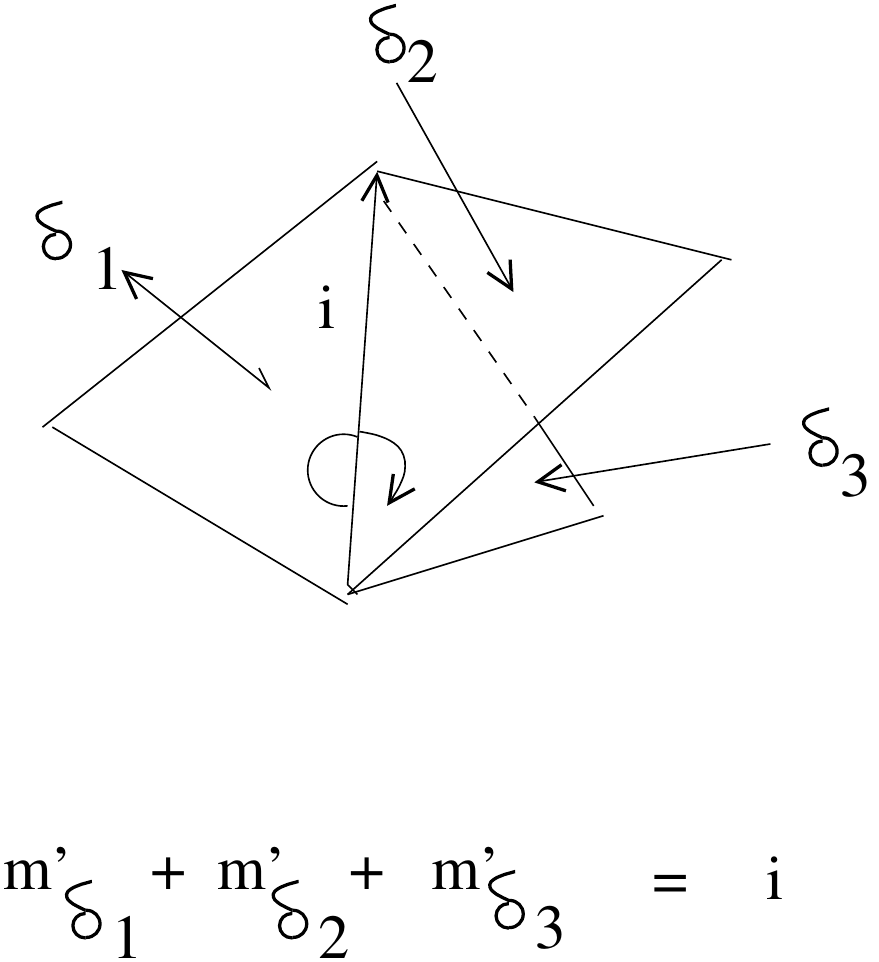}
 \caption{Triangles around an edge}
 %\label{fig:triangles_edge}
\end{figure}
\item for each triangle, the mmf $m'$ through triangle = flux $\phi $ through triangle $\times $ reluctance of the triangle.
\end{itemize}

The first two constraints are topological, analogous to Kirchhoff's laws.
We show in this paper, that  in the case of a $2-$ complex in the interior of a bounded tetrahedral region,
that the two sets of constraints, when the right side is zero, yield a pair of complementary orthogonal spaces.
This corresponds, in the case of $1-$dimensional electrical circuits on graphs, 
to voltage and current spaces being complementary orthogonal.

The outline of the paper follows.

Section \ref{sec:simple}
illustrates the physical problem through a simple example in which the triangles of interest lie
on the surface of a cube with one of them carrying a current in its boundary.

Section \ref{prelim} is on preliminary definitions on simplicial complexes, chains, cycles 
and coboundaries, boundaries of chains, coboundary
matrices  associated with the complexes etc.
It is shown that the fact that boundary of a boundary is a zero chain is equivalent
to product of $(j-1)-$coboundary matrix and $j-$coboundary matrix being the zero matrix.
It is shown that  $j-$coboundaries and $j-$cycles of a complex
  form complementary orthogonal spaces. 

Section \ref{sec:2complex}
deals with the special case of a $2-$complex embedded in $\mathbb{R}^3.$
Here, it is shown that the $2-$coboundary and $2-$cycle spaces are 
regular (completely unimodular) and,
by the use of a basic contractible space theorem, that the row space of 
the $2-$coboundary matrix is complementary orthogonal to the column space of the 
$3-$coboundary matrix of the complex. 
%This section and the previous preliminaries section are written with the intention of making the paper accessible to readers
	%unfamiliar with Algebraic Toplogy.

Section \ref{sec:electrical} is on electrical $2-$networks defined on $2-$complexes.
We prove a generalization of the celebrated Tellegen's theorem of electrical networks 
for electrical $2-$networks.
We also explicitly state the equations for such networks and show how to solve them
in the case of linear networks. In particular this shows how the physical problem
stated in the introduction can be solved.
%Subsection \ref{def:flux_mmf}
%Subsection \ref{subsec:tellegen}
%Subsection \ref{subsec:cycleeqns}

Section \ref{sec:triangle} is on the notion of a triangle adjacency graph $tag(\C)$ for the $2-$complex $\C.$
This graph has two vertices $v_+,v_-$ for each triangle of $\C.$ If the triangle were on the $x-y$
plane in $\mathbb{R}^3,$ one of these would be above and the other below at a distance $\epsilon .$
Two vertices are joined by an edge in $tag(\C),$ if the corresponding triangles of $\C$ have a common edge
and one can slide from one to another along triangles. For instance, if triangles $\delta_1, \delta_2, \delta_3$ 
of $\C$ were as in Figure \ref{fig:trianglesedge1}, then in $tag(\C),$ 
$v_+(\delta_1) $ would be connected by an edge to $v_-(\delta_2).$
The graph  $tag(\C)$  can be constructed in time linear in size
of $\C.$
It is shown that two vertices of $tag(\C)$ can be connected by a path iff
physically, the two vertices are in the same connected region of $\mathbb{R}^3\setminus\C.$
The graph $\G_{comptag(\C)}$ is built on connected components of vertices of $tag(\C).$
There is an edge  corresponding to each triangle of $\C$ directed from the component
of $tag(\C)$ containing $v_+$ to the one containing $v_-$ of the triangle.
Finally, it is shown that the rows of the incidence matrix of $\G_{comptag(\C)}$
are $2-$cycles of $\C.$

Section \ref{sec:celldual} is on the cell dual of a $3-$complex $\C_{\T}$ which is the decomposition
of a tetrahedron $\T$ into smaller tetrahedra $\tau_i$ whose interiors do not intersect.
The triangles which are the faces of these tetrahedra are either in the boundary of
$\T,$ in which case they belong to only one of the  $\tau_i$ 
or are common to exactly two of the $\tau_i.$ It is assumed that the given $2-$complex $\C$ lies in the interior
 of $\T$ and has its triangles as a subset of the triangles of $\C_{\T}.$ The cell 
dual is the graph $\G_{\T}$ which 
is obtained by replacing each 
$\tau_i$ by a node and joining two nodes if they represent tetrahedra which share a common triangle.
A region graph $\G_{region(\C)}$ corresponding to connected regions of $\T\setminus\C$ 
is shown to be obtained from $\G_{\T}$ by contracting edges which correspond to
triangles which do not belong to $\C.$
The row space of the incidence matrix of this region graph is shown to be the $2-$cycle space of $\C.$

Section \ref{sec:region=tag} is on the proof of the fact that when $\C$ is connected
$\G_{region(\C)}$ is identical to $\G_{comptag(\C)}.$ Since the latter can be constructed in time 
linear on the size of $\C,$ from the results of the previous section we have a convenient 
and easily constructed representation for the $2-$cycle space of $\C.$
This is sufficient for writing a linearly independent set of equations 
for the linear electrical $2-$network.

Section \ref{sec:matroid} discusses the matroid duality of the complex $\C$ and the graph
$\G_{region(\C)}$ and also sketches, given a non planar graph, how to enlarge it so that 
it has a matroid dual which is a $2-$complex.

Section \ref{sec:dualnetwork} describes how to build the dual network to a given $2-$network so as
to infer the solution of the $2-$network from that of the dual, and also 
indicates how to translate results from graph based networks to $2-$networks by considering the 
case of the Kirchhoff's tree formula.

Section \ref{sec:gen} sketches how to generalize the ideas of the paper from $2-$complexes
embedded in $\mathbb{R}^3$ to $(n-1)-$complexes embedded in $\mathbb{R}^n.$

Section \ref{sec:conclusion}
is on conclusions.
%Section \ref{}
\section{A simple instance of the problem}
\label{sec:simple}
We now illustrate the general ideas through a simple example.

In Figure \ref{fig:cube}, except for the thin layers 
of the faces, both the inside and outside of the cube are of high permeability ($\infty ,$
for simplicity).  
%The cube is centred at the origin. 
\begin{figure}
\centering
 \includegraphics[width=1.5in]{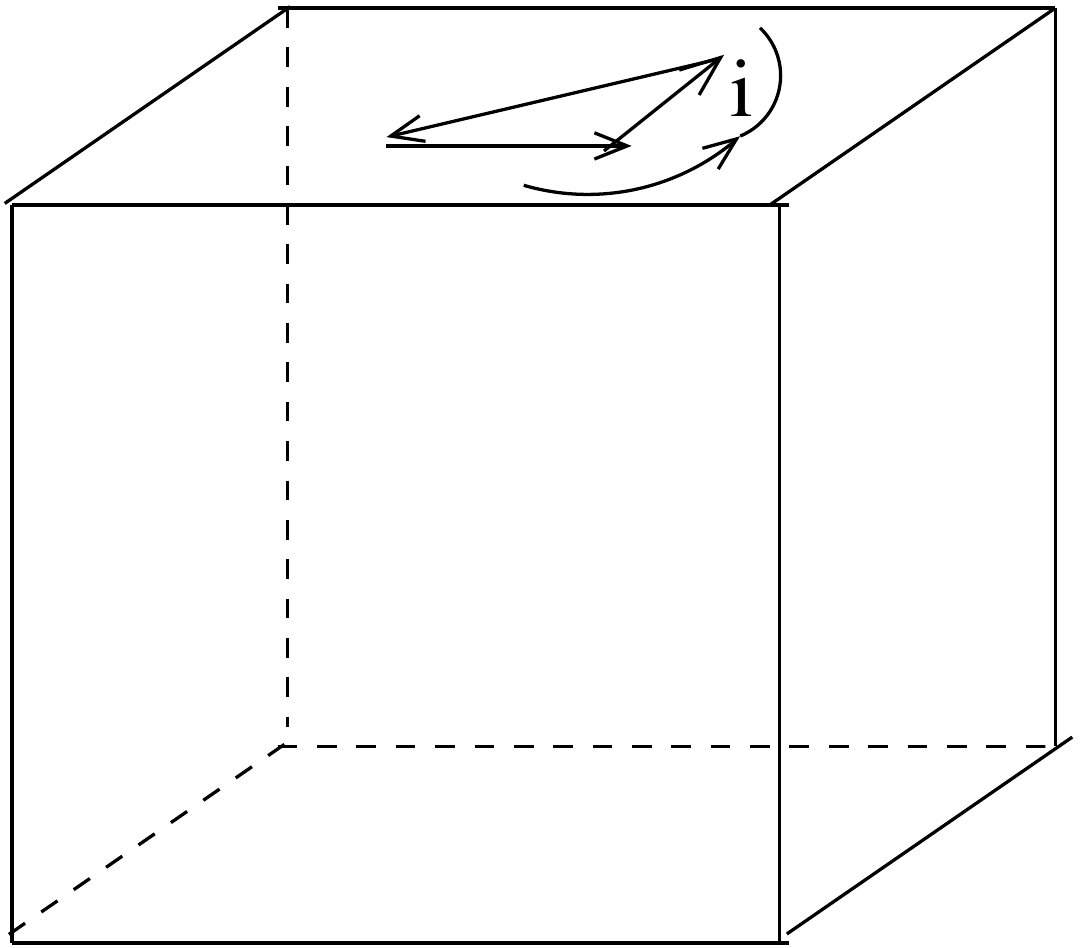}
 \caption{Cube with faces of low permeability}
 \label{fig:cube}
\end{figure}
We  divide the faces into triangles. 
One of the triangles would have a loop carrying current $i,$
in the direction of the orientation of the triangle,
on the upper face of the cube. 
The triangles should be small enough that one can take
the thickness to be constant  and the permeability to be  uniform in its interior without causing unacceptable error.
After computing the reluctance of the triangles, we will idealize them to have zero thickness.

The solution procedure is as follows.
\begin{enumerate}
\item Orient all the triangles consistent with  the outward normal of the cube (an orientation of a triangle
is an ordering of its vertices - see the beginning of Section \ref{prelim} and Figure \ref{fig:triangle_orientation}).  
\item For the $k^{th}$ triangle, compute the reluctance $r_k = \frac{d_k}{\mu _kA_k},$
where $\mu _k$ is the permeability in the thin triangular slab, $A_k$ is the area
and $d_k,$ the thickness. Let $H_k, B_k$ be the average magnetic field intensity and the 
average flux density normal to the surface of the triangle.
We write
$$H _k d_k= B_kA_k\times \frac{d_k}{\mu_k A_k} \ \ \mbox{i.e.,} $$ 
\begin{align}
\label{reluctance}
m'_k = \phi _k \times r_k \ \ \mbox{(mmf= flux times reluctance)}. 
\end{align}
\item With the orientation specified we have constraints for the fluxes and mmfs associated with triangles
($k^{th}-$ flux, $k^{th}-$ mmf with $k^{th}-$ triangle) as follows.
If $\phi $ denotes flux, we must have net flux leaving every closed surface equal to zero.
In the present case there is only one closed surface.
\begin{align}
\label{flux}
\sum_k \phi _k & =  0\ \ \mbox{i.e.,}\\
\bbmatrix{1 &\cdots & 1}\bbmatrix{\phi _1\\\vdots\\\phi_n} &= 0.
\end{align}
The constraint on the mmf vector is that the net mmf around an edge (i.e., the sum of the mmfs associated with the triangles
incident at an edge) is the current through the edge in the direction consistent 
with the direction of rotation.
This constraint can be expressed in terms of the rows  of the $2-$coboundary matrix
which has columns corresponding to triangles and rows corresponding to edges.
In the row for edge $e,$ the entry for triangle $\delta $
is $0$ if it is not incident on the edge,
is $+1$ if it is incident and agreeing with
 the orientation of the edge and 
is $-1$ if it is incident but oppositely oriented
to the orientation of the edge.

In the present case, every edge is incident on exactly two triangles and if we were to orient the triangles according to
the outward normal of the cube, in one it would agree with the orientation and,  in the other, it would oppose it.
Let  $A^{(2)}$ denote the $2-$ coboundary matrix and $m_k,$ the $k^{th}$ mmf. Further, let the first three rows
$A^{(2)}_{1,.},A^{(2)}_{2,.},A^{(2)}_{3,.}$ correspond to edges of the triangle carrying the current $i,$
edges oriented in the direction of the current
and let the remaining rows be denoted $A^{(2)}_{rem,.}.$
\begin{align}
\label{mmf}
\bbmatrix{A^{(2)}_{1,.}\\A^{(2)}_{2,.}\\A^{(2)}_{3,.}\\---------\\A^{(2)}_{rem,.}}\bbmatrix{m_1'\\\vdots\\m_n'}= \bbmatrix{i\\i\\i\\----\\{\bf 0}}.
\end{align}
\item Solve simultaneously Equations \ref{reluctance}, \ref{flux} and \ref{mmf}.
\end{enumerate}

The above appears as a straight forward linear algebraic problem but with singular equations.
The first step is to build an equivalent nonsingular set of equations. 
If, however, we use linear algebraic methods at this stage, it would be (relatively speaking) computationally expensive.
We will show how to do this 
combinatorially in linear time and also show that, after this, the problem reduces to solving a conventional 
electrical circuit based on graphs. This is one of the contributions of this paper. 
After that we would be left with a set of sparse linear equations
for which very efficient practical procedures exist in the literature.
\section{Preliminaries}
\label{prelim}
\begin{figure}
\centering
 \includegraphics[width=1.5in]{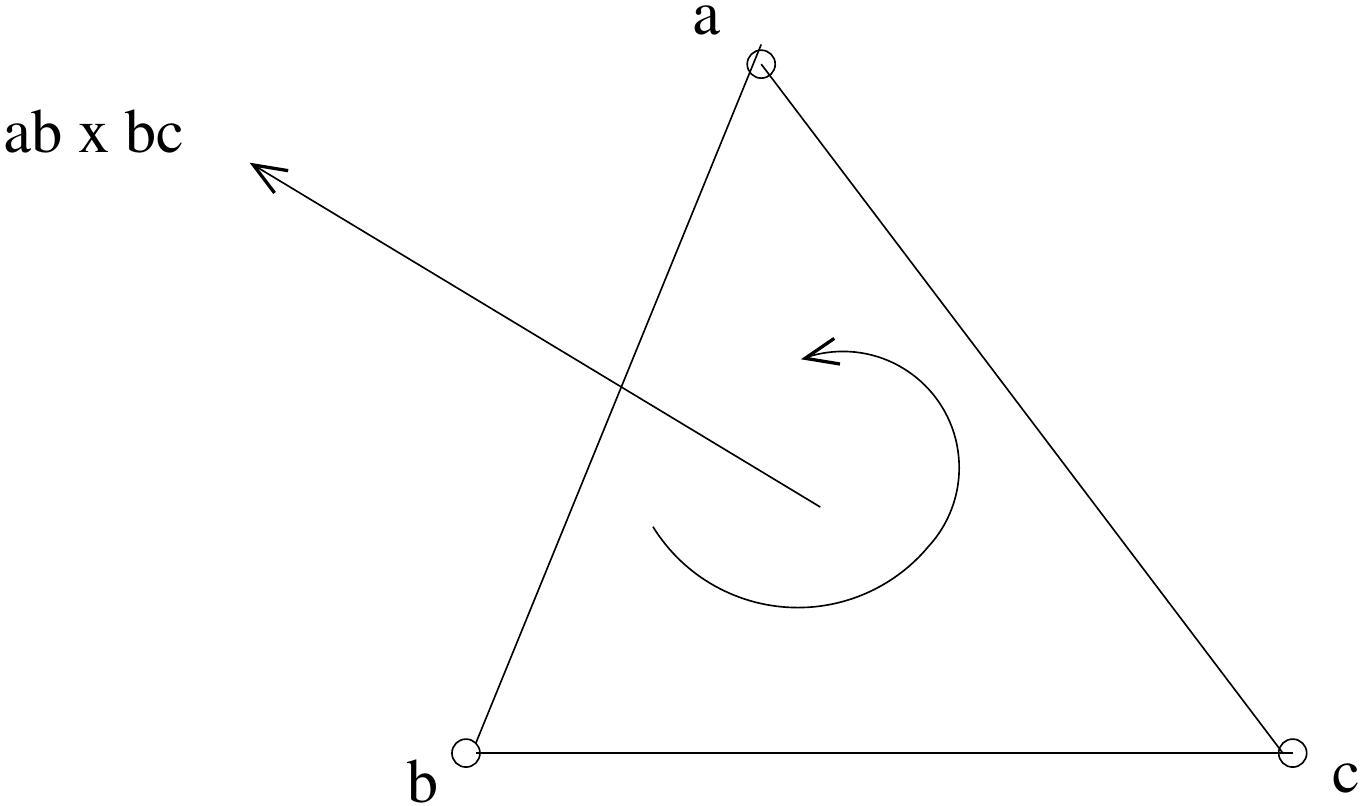}
 \caption{Triangle $abc$ oriented consistent with the vector product $ab\times bc$ }
 \label{fig:triangle_orientation}
\end{figure}
This section follows \cite{whitney} (more recent references are \cite{hatcher}, \cite{munkres}).

Let $a_0, \cdots ,a_n$ be vectors in $\mathbb{R} ^n.$ 
An $n-$simplex $a_0 \cdots a_n$
is the convex hull of the vectors $a_0, \cdots ,a_n,$ i.e., the set of all points 
$\sum_0^n\lambda _ia_i ,$ where $\lambda_i \geq 0, \sum_0^n\lambda _i=1.$
A face of the $n-$simplex $a_0 \cdots a_n$
is the convex hull of any subset of the vectors $a_0, \cdots ,a_n.$
An oriented $n-$cell 
$c$ is a simplex $a_0 \cdots a_n$
with vertices $a_0, \cdots ,a_n$ and with a specifed ordering of vertices $[a_0, \cdots ,a_n].$
Two orderings $[a_0, \cdots ,a_n], [b_0, \cdots ,b_n] $ of the vertices are treated as equivalent 
if $[b_0, \cdots ,b_n] $ is an even permutation of $[a_0, \cdots ,a_n]$ and opposite if an odd permutation.
We will usually call a $2-$cell, a triangle.
If $abc$ is  an oriented triangle embedded in $\mathbb{R}^3,$ we say the orientation $[abc]$ is consistent with the direction of the
vector cross product $ab \times bc$ (see Figure \ref{fig:triangle_orientation}).

Suppose $S:= \{c_1, \cdots , c_m\}$ is a set of oriented $n-$cells.
Let $c_j, c_j', j=1, \cdots ,m$ be oppositely oriented. Let $S':= \{c_1', \cdots , c_m'\}.$ We can think of a new orientation 
for the cells to be a one to one map $\sigma:S\rightarrow S\cup S'$ 
with $\sigma(c_j)= c_j \ \mbox{or} \ c_j'.$
\begin{example}
An oriented $0-$cell is a point. An oriented $1-$cell is a directed edge. An oriented $2-$cell  is an oriented
triangle. An oriented $3-$cell is an oriented tetrahedron (see Figure \ref{fig:regular}).
The $1-$cells $ab,ba$ are oppositely oriented. The $2-$cells $abc,cba$ are oppositely oriented and similarly the 
$3-$cells $abcd, dcba.$ The $3-$cells $abcd,badc$ denote the same oriented cell since
$\ppmatrix{a&b&c&d\\b&a&d&c}$ is an even permutation.
\end{example}
%\begin{figure}
%\centering
% \includegraphics[width=4in]{}
% \caption{Oriented cells}
% \label{fig:cells}
%\end{figure}

%\end{example}
Let $c_1, \cdots , c_m$ be oriented $n-$cells. An $m-$chain is a formal sum $\sum_1^m \alpha_ic_i, \alpha_i \in \mathbb{R}.$ When $c_i$ is an $n-$cell with $n\geq 1,$ We take $-c_i$ to be 
the $n-$cell with the same vertices but with orientation opposite to that of $c_i.$
The formal sum is the `zero chain' iff each of the $\alpha_i$ is zero.
If $\sum_1^n \alpha_ic_i,\sum_1^n \beta_ic_i,$ are two $m-$chains, we define their sum to be 
$\sum_1^n (\alpha_i+\beta_i)c_i.$
We define $\lambda(\sum_1^n \alpha_ic_i)$ to be $\sum_1^n \lambda \alpha_ic_i.$
 
Given an oriented $n-$cell $a_0 \cdots a_n,$ the boundary $\partial(a_0 \cdots a_n)$
is defined to be  the $(n-1)-$chain 
$$\underline{a}_0a_1\cdots a_n + (-1)^1a_0 \underline{a}_1\cdots a_n+ \cdots + (-1)^ja_0 \cdots \underline{a}_j\cdots a_n + \cdots + (-1)^na_0 \cdots \underline{a}_n,$$
where $a_0 \cdots \underline{a}_j\cdots a_n$ denotes $a_0 \cdots a_{j-1} a_{j+1}\cdots a_n.$
When $a_0$ is a $0-$cell, we take $\partial(a_0):= 0.$ 

For any $m-$chain $\sum_1^m \alpha_ic_i,$ we take $\partial (\sum_1^m \alpha_ic_i):= \sum_1^m \alpha_i\partial (c_i).$
It follows from the definition that $\partial\partial(a_0 \cdots a_m)$
is the zero $(m-2)-$chain, for $m\geq 2.$ For $m<2,$ we take it to be trivially zero.
For any $m-$chain,  $\sum_1^m \alpha_ic_i,m\geq 2,$ we therefore have $\partial\partial (\sum_1^m \alpha_ic_i)$
as the zero $(m-2)-$chain and for $m<2,$ we take it to be trivially zero. 
\begin{example}
$\partial\partial(a_0 a_1 a_2)= \partial(a_1a_2-a_0a_2+a_0a_1)=(a_1-a_2)-(a_0-a_2)+(a_0-a_1)=0.$
\end{example}
Cochains are defined to be linear functionals acting on chains.
We will deal with chains and cochains uniformly using the notion of a vector.
\begin{definition}
\label{def:vector}
A vector is a function $h:S\rightarrow \F,$ from a finite set $S$ to a field $\F.$ We say 
that the vector is on $S$ over $\F.$ In the present paper 
the field  would be  $\mathbb{R}.$ The support of $h,$ denoted $supp(h),$ is the 
subset of $S,$ where $h$ takes nonzero values. 
Addition $h_1+h_2$ of vectors $h_1,h_2$ on $S$ over $\F$ is defined by $(h_1+h_2)(e):=h_1(e)+h_2(e), \ e\in S;$
scalar multiplication $\lambda h_1,\ \lambda \in \F,$ of $h_1$  is defined by $(\lambda h_1)(e):=\lambda (h_1(e)), \ e\in S.$
A vector space on $S$ over $\F$ is a collection of vectors $S$ over $\F$ closed under addition and scalar multiplication.
 
\end{definition}
In the case of chains we can identify $\sum_1^m \alpha_ic_i$ with the vector $h$ such that 
$h(c_i)= \alpha_i.$ If $c_i'$ is oppositely oriented to $c_i$  we take $h(c_i')= -\alpha_i.$
Thus if $S:=\{c_1, \cdots ,c_m\},$ $\alpha c_1+\beta c_2 + \gamma c_3$ can be identified with
$\begin{matrix}
\  &c_1&c_2&c_3&c_4&\cdots &c_m&\  
\\(&\alpha &\beta & \gamma&0&\cdots&0&)
\end{matrix}
$ .

%An $n-$complex $\C,$ is a collection of oriented $n-$cells, $(n-1)-$cells, $\cdots $, $0-$cells,
%such that 
\begin{definition}
An $n-$complex $\C,$ is a collection of $n-$cells, $(n-1)-$cells, $\cdots $, $0-$cells,
such that every face of a cell in $\C$ is also in $\C$ and the intersection of any two cells 
of $\C$ is a face of each of them.
Thus, the cells in the boundary of each $j-$cell, $0<j\leq n,$ are also present in the complex.
Further, a pair of distinct $k-$ cells, $0<k\leq n$ either has no intersection
or intersects in a $j-$cell, $j\leq k-1,$ present in $\C .$
We will denote the collection of oriented $k-$cells of $\C$ by $S^{(k)}(\C)$ or, when clear
from the context, by $S^{(k)}.$
\end{definition}
%Finally, a $(k-1)-$cell of the complex can be contained in atmost two $k-$cells of the complex. 
%An oriented $n-$complex  is an $n-$complex  with an orientation given to each of its cells.
We only deal with oriented complexes and will omit the prefix `oriented' while referring to them.

To define higher dimensional electrical circuits, we need the definition of a $(k,k-1)$ skeleton of 
a complex.
\begin{definition}
The $(k,k-1)$ skeleton, $1\leq k\leq n,$ of an $n-$complex $\C$
is the pair $(S^{(k)}(\C),S^{(k-1)}(\C))$
with the boundary relationship between the $k-$cells  in $S^{(k)}$ and the $(k-1)-$cells
in $S^{(k-1)}$ agreeing with that of the complex $\C.$
\end{definition}
The most natural way of defining new complexes from old is through the notion of a subcomplex.
\begin{definition}
Let $\C$ be an $n-$complex and let $k\leq n.$ Let $T\subseteq S^{(k)}(\C).$ The $k-$subcomplex 
$\C_k$ of $\C$ on $T$ has $S^{(k)}(\C_k)= T$ and for $j\leq k$ has 
$S^{(j-1)}(\C)\supseteq S^{(j-1)}(\C_k) $ 
with the
 boundaries of $j-$cells of $\C_k$ 
agreeing with the boundaries of these cells in $\C.$  
%with $S^{(j-1)}(\C)\supseteq S^{(j-1)}(\C_k) .$ 
%and $S^{(j-1)}(\C_k) $ 
%containing the boundary cells
%of $S^{(j)}(\C_k).$ 
\end{definition}
The `connectedness' of complexes, defined below, has a role in algorithms developed in the present paper.
\begin{definition}
\label{def:connect}
An $n-$complex $\C$ is said to be $n-$connected (or connected in brief), if given any pair of $(n-1)-$cells in the complex, $c_1, c_{end},$
there exists a sequence $c_1,\cdots , c_k, c_{end}$ of $(n-1)-$cells such that  $c_i,c_{i+1}, 1\leq i< k,$
are faces of a  common $n-$cell  and so are $c_k, c_{end}.$
A graph is a $1-$complex.
% and is said to be connected if given any pair of vertices, $c_1, c_{end},$
%there exists a sequence $c_1,\cdots , c_k, c_{end}$ of vertices such that  $c_i,c_{i+1}, 1\leq i\leq k,$
%have an edge between them  and so do $c_k, c_{end}.$
A $1-$subcomplex of a graph is called a subgraph.
The connected components of a graph are its maximally connected subgraphs.
\end{definition}
\begin{definition}
A (path) connected region  in $\mathbb{R}^n$ is a set of points in $\mathbb{R}^n$ with the
property that given any two points in the set, it contains a continuous path between them.
\end{definition}

A convenient way of representing an $n-$complex $\C,$ is through a sequence of coboundary matrices
$A^{(1)}(\C), \cdots , A^{(n)}(\C).$

\begin{definition}
\label{def:coboundary}
The matrix $A^{(k)}(\C), 1\leq k\leq n,$ has its columns indexed by the $k-$cells of $\C,$ say
$c^{(k)}_1, \cdots , c^{(k)}_r,$ and rows indexed by the $(k-1)-$cells, say 
$c^{(k-1)}_1, \cdots , c^{(k-1)}_m.$ 
The $(i,j)^{th}$ entry of this matrix would be zero, if $c^{(k-1)}_i$ does not lie in the 
When clear from the context we will write $A^{(k)}$ in place of $A^{(k)}(\C).$ 
boundary of $c^{(k)}_j,$  $+1, $ if it occurs with a positive sign in the boundary and 
$-1,$ if it occurs with a negative sign in the boundary.
$k-$cells can be degenerate in the  sense that their boundary is zero or made up
of  $p-$chains where $p\leq k-2.$
In such a case the corresponding column of $A^{(k)}(\C)$ would be the zero column.
\end{definition}
When clear from the context we will write $A^{(k)}$ in place of $A^{(k)}(\C).$ 

It is evident that the column $c^{(k)}_j$  represents the boundary vector 
$\partial(c^{(k)}_j):= \sum \beta_ic^{(k-1)}_i$ 
with the $(i,j)^{th}$ entry being $\beta _i.$
\begin{example}
Let $\C$ be a $3-$complex and let $abc$ be an oriented $2-$cell and let $ab,cb,ac$
be oriented $1-$cells in $\C.$ Then, in $A^{(2)}(\C),$
the column $abc$ will have $+1$ in the row $ab,$
 $-1$ in the row $cb,$
 $-1$ in the row $ac$
and in all other rows will have $0.$
\end{example}
Consider the $k-$chain 
$\sum_1^n x_jc_j^{(k)}.$ The boundary  $\partial (\sum_1^n x_jc_j^{(k)})= \sum_1^n x_j\partial (c_j^{(k)}).$
Therefore this vector is represented in terms of $(k-1)-$cells of the complex by
$\bbmatrix{A^{(k)}(\C)}\bbmatrix{\bf{x}}.$
Since the boundary of any boundary vector is the zero chain, we must have
$\bbmatrix{A^{(k-1)}(\C)}\bbmatrix{A^{(k)}(\C)}\bbmatrix{\bf{x}}=\bbmatrix{\bf{0}},$
for any vector $\bf{x}.$
It follows that $\bbmatrix{A^{(k-1)}(\C)}\bbmatrix{A^{(k)}(\C)}=\bbmatrix{\bf{0}},k\geq 2,$
i.e., every row vector of $\bbmatrix{A^{(k-1)}(\C)}$ is orthogonal to every column vector
of $\bbmatrix{A^{(k)}(\C)}.$ On the other hand, if $\bbmatrix{A^{(k-1)}(\C)}\bbmatrix{A^{(k)}(\C)}=\bbmatrix{\bf{0}},$ it is clear that $\bbmatrix{A^{(k-1)}(\C)}\bbmatrix{A^{(k)}(\C)}\bbmatrix{\bf{x}}=\bbmatrix{\bf{0}},$
which implies that  boundary of any boundary vector is zero. Thus the statement
that the boundary of the boundary of a $k-$chain is the zero $(k-2)-$vector is
equivalent to $\bbmatrix{A^{(k-1)}(\C)}\bbmatrix{A^{(k)}(\C)}=\bbmatrix{\bf{0}}.$
Note that this only implies that the row space of 
$\bbmatrix{A^{(k-1)}(\C)}$ and the column space of $\bbmatrix{A^{(k)}(\C)}$
are orthogonal and not that they are complementary orthogonal.
The situation relevant to this paper is where they are actually complementary orthogonal
 (i.e., where the `$(k-1)^{th}$ homology group' is zero).

Let $S^{(k)}, 0\leq k\leq n$ denote the set  of $k-$cells of the $n-$complex
$\C.$ Given a vector $y:S^{(k)}\rightarrow \mathbb{R},$ we define the coboundary $z$
of $y$ by 
$\bf{z}^T:= \bf{y}^T\bbmatrix{A^{(k)}}$
 (treating $z,y$ as  row vectors $\bf{z}^T,\bf{y}^T$).

Since $\bf{y}^T(\bbmatrix{A^{(k)}}\bbmatrix{\bf{x}})=(\bf{y}^T\bbmatrix{A^{(k)}})\bbmatrix{\bf{x}},$
it is clear that the `action' of  $y$ on the $(k-1)-$chain
represented by the boundary vector $\bbmatrix{A^{(k)}}\bbmatrix{\bf{x}}$
is the same as the `action' of the  coboundary of $y$  on the $k-$chain 
$\sum_1^n x_jc_j^{(k)}$ represented by the vector ${\bf{x}}.$
We call a  vector belonging to the row space of $\bbmatrix{A^{(k)}(\C)}$ 
a $k-$coboundary of $\C$ 
and a $k-$chain $\sum_1^n x_jc_j^{(k)},$ a $k-$cycle of $\C,$
if its boundary is zero, i.e., satisfies
$\bbmatrix{A^{(k)}(\C)}\bbmatrix{\bf{x}}=\bbmatrix{\bf{0}}.$

We summarize the above discussion in the following theorem.
\begin{theorem}
\label{cycle_cob}
For any $n-$complex $\C,0<k\leq n,$ 
\begin{enumerate}
\item the boundary of a $k-$chain $x$ is $\bbmatrix{A^{(k)}}\bbmatrix{\bf{x}};$
\item every boundary of a  $k-$chain
is a $(k-1)-$cycle;  
\item $\bbmatrix{A^{(k-1)}(\C)}\bbmatrix{A^{(k)}(\C)}=\bbmatrix{\bf{0}};$
\item the space of $k-$coboundaries and the space of
$k-$cycles are complementary orthogonal.
\end{enumerate}
\end{theorem}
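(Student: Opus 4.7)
The plan is to read off all four parts essentially from the discussion preceding the theorem, which has already done the substantive bookkeeping; the theorem serves to consolidate these observations. I would handle the four items in order, since each is used in proving the next.

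For part (1), I would start from the definition. Writing the $k$-chain as $x = \sum_{j=1}^r x_j c_j^{(k)}$, linearity of $\partial$ gives $\partial x = \sum_j x_j \partial(c_j^{(k)})$. By Definition \ref{def:coboundary}, the $j$-th column of $A^{(k)}$ is precisely the coordinate vector of $\partial(c_j^{(k)})$ in the basis $c_1^{(k-1)},\ldots,c_m^{(k-1)}$, so $\partial x$ has coordinate vector $A^{(k)}\bbmatrix{\bf{x}}$, which is (1).

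For (2), I would invoke the computation $\partial\partial(a_0\cdots a_k)=0$ (given in the excerpt's Example) together with linearity: for every basis $k$-cell, the boundary is a $(k-1)$-cycle, and so the same holds for every $k$-chain by taking linear combinations. Part (3) is then a matrix reformulation of (2): applying part (1) twice tells us that for every coordinate vector $\bf{x}$, we have $A^{(k-1)} A^{(k)} \bbmatrix{\bf{x}} = {\bf 0}$. Taking $\bf{x}$ to range over the standard unit vectors forces the product $A^{(k-1)} A^{(k)}$ itself to be the zero matrix.

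For (4), the key point (and the only possibly confusing one) is to notice that this item is \emph{not} the delicate cross-dimensional statement flagged in the paragraph just before the theorem, which compares the row space of $A^{(k-1)}$ with the column space of $A^{(k)}$ and would require vanishing of the $(k-1)$-th homology. Instead, part (4) compares the row space of $A^{(k)}$ (the $k$-coboundaries, by definition $\{{\bf y}^T A^{(k)}\}$) with the null space of $A^{(k)}$ (the $k$-cycles, by definition $\{{\bf x}:A^{(k)}{\bf x}={\bf 0}\}$). This is just the standard fundamental-theorem-of-linear-algebra fact that for any real matrix $M$, the row space and the null space are complementary orthogonal subspaces of the ambient coordinate space: orthogonality is immediate from $({\bf y}^T M){\bf x} = {\bf y}^T(M{\bf x}) = 0$, and complementarity follows from the rank--nullity identity $\dim(\text{row space}) + \dim(\text{null space}) = $ number of columns.

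Thus the only conceptual content is the identification $\partial\partial=0 \Leftrightarrow A^{(k-1)}A^{(k)}=0$ in (3); everything else is definitional or a standard linear-algebraic fact. I expect no real obstacle, but the main place to be careful is not to conflate the complementary orthogonality asserted in (4), which is internal to a single coboundary matrix, with the cross-dimensional orthogonality that the paper will establish separately (in the special embedded setting of Section \ref{sec:2complex}) via contractibility arguments.
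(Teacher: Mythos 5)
Your proposal is correct and follows essentially the same route as the paper, which explicitly states that the theorem merely ``summarizes the above discussion'': part (1) is the linearity-plus-definition computation, parts (2)--(3) come from $\partial\partial=0$ applied to the columns of $A^{(k)}$, and part (4) is the complementary orthogonality of the row space and null space of the single matrix $A^{(k)}$. Your explicit warning not to confuse (4) with the cross-dimensional orthogonality of the row space of $A^{(k-1)}$ and the column space of $A^{(k)}$ matches the caveat the paper itself makes immediately before the theorem.
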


\section{$2-$complexes in $\mathbb{R}^3$} 
\label{sec:2complex}
\begin{figure}
\centering
 \includegraphics[width=1.5in]{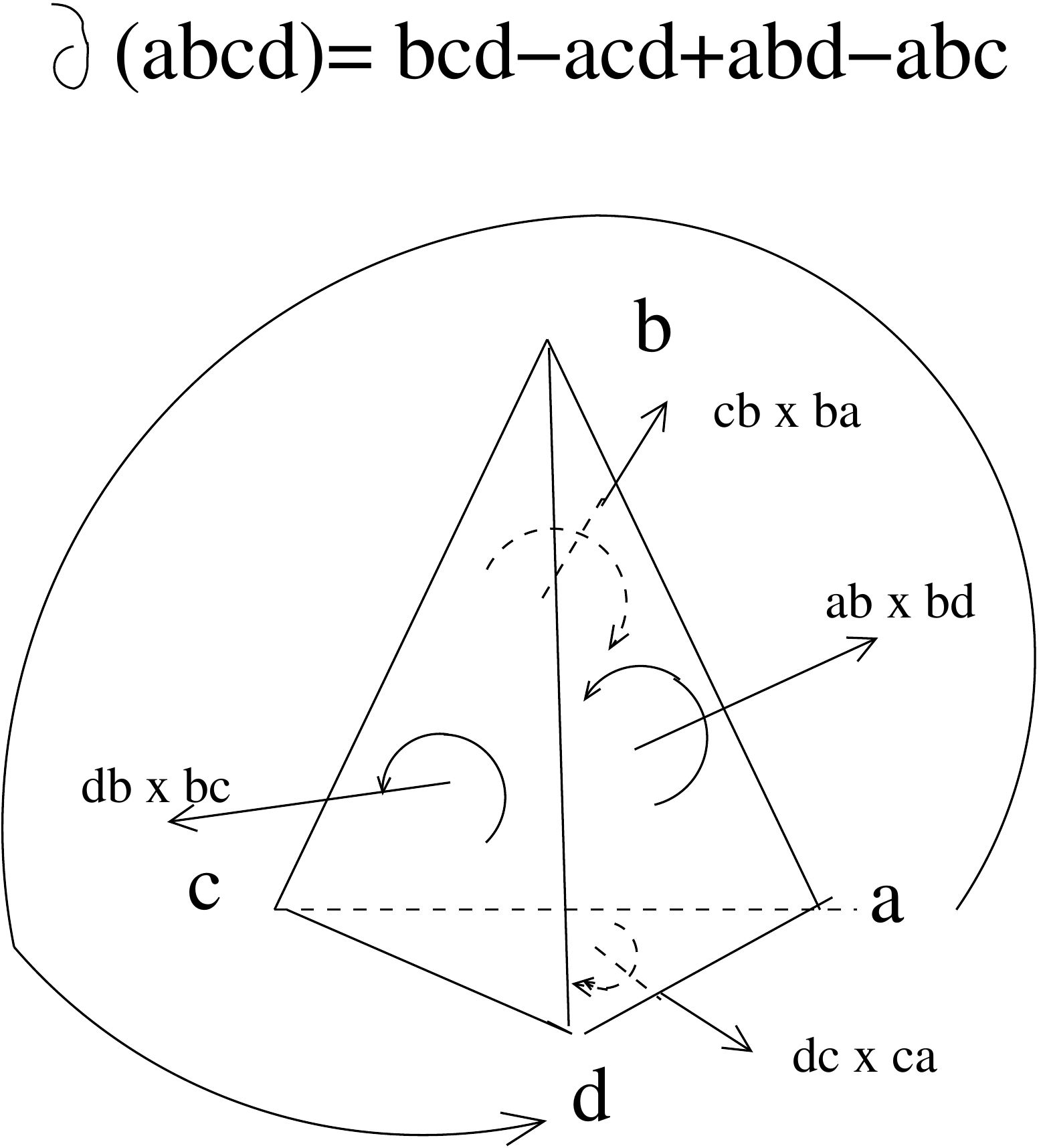}
 \caption{Regular orientation of a tetrahedron}
 \label{fig:regular}
\end{figure}

In this paper our interest is in $(2,1)$ skeletons of $3-$complexes which are embedded 
in $\mathbb{R}^3$ since we build our higher dimensional electrical networks on such 
skeletons. We show in this section that the $2-$coboundaries and $2-$cycles associated with these skeletons have
certain properties, for instance complete unimodularity,  which are useful for our purpose. \\
%i.e., the $3-$simplices are convex hulls of $3-$vectors.

We are given a $2-$complex $\C$ embedded in $\mathbb{R}^3,$ i.e., the vertices of triangles
($2-$cells) of $\C$ are vectors in $\mathbb{R}^3.$
We consider the region of interest to be a tetrahedron $\T$
which contains $\C$ in its interior.
%and further is such that its boundary does not intersect 
The tetrahedron $\T$ is decomposed into a 
set of tetrahedra whose interiors do not intersect 
and which together with their faces constitute the $3-$complex $\C_{\T}.$
Additionally, the decomposition is such that
the set of triangles $S^{(2)}(\C)$ of $\C$ is contained in the set of triangles
$S^{(2)}(\C_{\T})$ of $\C_{\T}.$
We begin by assigning an orientation to $\T.$
Using the orientation of $\T,$ as the reference, we can orient all the tetrahedra 
of $\C_{\T}$
consistently so that if two of them intersect in a triangle, the boundaries of the 
two tetrahedra assign opposite orientations to the triangle.
We could, without loss of generality, assume this orientation of the triangle
to be consistent with the outward 
normal with respect to the tetrahedron.
Let us call this orientation, a regular orientation for the tetrahedra
of $\C_{\T}$ (see Figure \ref{fig:regular}).\\

We will work with regions which are unions of tetrahedra
of $\C_{\T},$
which we will call regions of $\C_{\T}.$
Let $P$ be a subset of regularly oriented tetrahedra of  $\C_{\T}.$
The chain $\sum_{\tau_i\in P}\tau_i$ can be identified with the 
region $\Reg_P:= \bigcup_{\tau_i\in P}\tau_i.$
We then have 
\begin{theorem}
\label{boundary_of_region}
The boundary of $\Reg_P,$ i.e., the formal sum of the oriented boundary triangles of $\Reg_P,$ is equal to the $2-$cycle $\sum_{\tau_i\in P}\partial(\tau_i).$
\end{theorem}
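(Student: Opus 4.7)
The plan is to compute $\sum_{\tau_i \in P}\partial(\tau_i)$ directly as a formal sum of oriented triangles and show that the interior triangles cancel out in pairs, leaving precisely the oriented boundary of $\Reg_P$. That it is a $2$-cycle is then immediate from Theorem \ref{cycle_cob}, since $\sum_{\tau_i \in P}\tau_i$ is a $3$-chain and the boundary of any $k$-chain is a $(k-1)$-cycle.

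First I would classify every triangle $\delta \in S^{(2)}(\C_{\T})$ according to how it sits in $\C_{\T}$: either (a) $\delta$ lies on the boundary of $\T$, in which case $\delta$ is a face of exactly one tetrahedron of $\C_{\T}$; or (b) $\delta$ lies in the interior of $\T$, in which case, by the decomposition property of $\C_{\T}$, $\delta$ is a face of exactly two tetrahedra of $\C_{\T}$. Now expand the chain $\sum_{\tau_i \in P}\partial(\tau_i)$: each oriented triangle appears once for every tetrahedron of $P$ that has it as a face, with the sign dictated by that tetrahedron's boundary.

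Next I would use the regular orientation. For an interior triangle $\delta$ with adjacent tetrahedra $\tau, \tau'$, the defining property of the regular orientation is that $\partial(\tau)$ and $\partial(\tau')$ induce \emph{opposite} orientations on $\delta$. Three cases arise. If both $\tau, \tau' \in P$, the two contributions cancel and $\delta$ drops out, which is correct because such a $\delta$ is interior to $\Reg_P$ and not on its boundary. If neither is in $P$, $\delta$ does not appear, which again matches since $\delta$ is disjoint from $\Reg_P$. If exactly one, say $\tau \in P$, $\delta$ appears exactly once, with the orientation induced by $\partial(\tau)$; since the regular orientation is the outward-normal orientation with respect to $\tau$, this is precisely the outward-normal orientation of $\delta$ with respect to $\Reg_P$. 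For a boundary triangle $\delta$ of $\T$, let $\tau$ be its unique containing tetrahedron; then $\delta$ appears in the sum iff $\tau \in P$, again with the outward-normal orientation with respect to $\Reg_P$.

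Putting these cases together, the surviving terms in $\sum_{\tau_i \in P}\partial(\tau_i)$ are exactly the triangles of $\C_{\T}$ that bound $\Reg_P$, each carrying the orientation consistent with the outward normal of $\Reg_P$; this by definition is the boundary chain of $\Reg_P$. Finally, by Theorem \ref{cycle_cob}(2), this chain is a $2$-cycle, completing the proof.

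The main obstacle is the sign bookkeeping in the regular-orientation step: one must be careful to verify that the ``opposite orientations induced by adjacent tetrahedra'' property of the regular orientation really does produce cancellation for shared interior triangles of $\Reg_P$ and produces the outward-normal orientation for triangles shared with tetrahedra outside $P$. Once that bookkeeping is pinned down, the rest is straightforward.
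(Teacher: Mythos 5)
Your proof is correct and follows essentially the same route as the paper's: cancellation of the shared interior triangles via the regular orientation, identification of the surviving terms as the outward-oriented boundary triangles of $\Reg_P$, and the $2$-cycle property from $\partial\partial=0$ (Theorem \ref{cycle_cob}). Your explicit case analysis (boundary-of-$\T$ versus interior triangles, and the three subcases for membership in $P$) is a slightly more detailed bookkeeping of the same argument.
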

\begin{proof}
When 
tetrahedra $\tau_i,\tau_j, i\ne j$ intersect in a triangle, because of the regular
orientation of tetrahedra, this common triangle
is oppositely oriented in $\tau_i,\tau_j,$ and so cancels out in $\sum_{\tau_i\in P}\partial(\tau_i).$
The only terms that remain are those triangles which occur only once as the boundary of a tetrahedron in $P.$ These are precisely the triangles at the boundary of $\Reg_P$
and their orientation would be consistent with the outward normal of the tetrahedron
in question and therefore also with the outward normal with respect to the region
$\Reg_P.$
Since we have 
$\partial(\sum_{\tau_i\in P}\partial(\tau_i))= \sum_{\tau_i\in P}\partial\partial(\tau_i)=0,$
it follows that $\sum_{\tau_i\in P}\partial(\tau_i)$ is a $2-$cycle
of $\C_{\T}.$
Thus the formal sum of the oriented boundary triangles of the region $\Reg_P$ is equal to the 
$2-$cycle $\sum_{\tau_i\in P}\partial(\tau_i).$ 
\end{proof}
We now state a basic result in algebraic topology (Theorem 19.5 of \cite{munkres})
%due to Hurewicz \ref{hatcher},
after a preliminary definition.
\begin{definition}
Let $\U$ denote the closed interval $[0,1].$
Let ${\C}$ be an $n-$complex in $\mathbb{R}^m, m\geq n,$ and let $\hat{\C}$ be the union of all $n-$cells of ${\C}.$
We say $\hat{\C}$  is contractible if there exists a continuous 
map $h: \hat{\C}\times \U\rightarrow \hat{\C},$  such that \\
for all $y\in \hat{\C},$
% belonging to the union of $n-$cells of $\C,$
$h(y,0)=y,$ 
\\
for all $y\in \hat{\C}$ and for some $x\in \hat{\C},$  $h(y,1)=x$  and, further,
\\
for this $x,$  $h(x,z)=x, $  for all $z\in I.$ 
\end{definition}
\begin{theorem}
\label{Hurewicz}
If the union of  $n-$cells of an $n-$complex $\C$
is contractible, then \\
for $0<j\leq n,$ every $(j-1)-$cycle  
of the complex is the boundary of a $j-$chain.
\end{theorem}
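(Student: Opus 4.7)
The plan is to reduce to the vanishing of simplicial homology $H_{j-1}(\C)$ and to derive that vanishing from contractibility via singular homology. My first step is to observe that ``every $(j-1)$-cycle is the boundary of a $j$-chain'' is precisely the statement $H_{j-1}(\C)=0$ (understood in the reduced sense for $j=1$, since the paper's convention $\partial(a_0)=0$ forces every $0$-chain to be a $0$-cycle, so one really wants the augmentation kernel in that case). Thus the goal reduces to showing $H_{j-1}(\C)=0$ for $1\le j\le n$.

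Next I would invoke the standard isomorphism between the simplicial homology of $\C$ and the singular homology of its underlying polyhedron $\hat{\C}$ (as in Munkres, Chapter~4), which is proved by constructing mutually chain-homotopy-inverse chain maps between the simplicial and singular chain complexes. This lets me work in the singular setting, where continuous maps have natural induced maps on chains.

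Then I would use homotopy invariance of singular homology. The contraction $h$ is a homotopy between the identity map $\hat{\C}\to\hat{\C}$ and the constant map at $x$; hence both induce the same map on $H_k(\hat{\C})$. The constant map factors through a one-point space, so it induces the zero map on $H_k$ for $k\ge 1$ and on reduced $\tilde{H}_0$. This forces $H_k(\hat{\C})=0$ in all dimensions of interest, and transferring back through the simplicial-singular isomorphism yields the theorem.

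The hardest step is really the simplicial-singular comparison, which is the technical heart of the argument and is normally cited as a classical result rather than reproved. A more self-contained alternative would be to construct an explicit chain null-homotopy $D:C_j(\C)\to C_{j+1}(\C)$ satisfying $\partial D + D\partial = \mathrm{id}-\epsilon$ (with $\epsilon$ the augmentation), by iteratively coning each simplex along a simplicial approximation of $h$; any cycle $z$ would then satisfy $z=\partial D(z)$, exhibiting it as a boundary. The obstacle on that direct route is that $h$ is generally not simplicial, so one must first pass to sufficiently fine subdivisions and apply simplicial approximation before the cone-wise construction can be carried out simplex by simplex.
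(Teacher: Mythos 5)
The paper does not actually prove this theorem; it is stated as a classical result cited from Munkres (Theorem 19.5), and your argument --- reduce to $H_{j-1}(\C)=0$, pass to singular homology via the simplicial--singular comparison, and apply homotopy invariance to the contraction $h$ --- is precisely the standard proof underlying that citation, so it is correct and consistent with what the paper intends. Your remark that the $j=1$ case must be read in reduced homology (since the paper's convention $\partial(a_0)=0$ makes every $0$-chain a $0$-cycle, whereas boundaries of $1$-chains have augmentation zero) is a genuine and worthwhile precision that the paper's statement glosses over.
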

If an $n-$ complex has the property that for $0<j\leq n,$ every $(j-1)-$cycle
of the complex is the boundary of a $j-$chain, then it is said to be acyclic.

Since every convex set in $\mathbb{R}^n$ is contractible it follows that the 
tetrahedron $\T$ in $\mathbb{R}^3$ is contractible.
We thus have  the following result about $2-$cycles and boundaries of $3-$chains.
\begin{theorem}
\label{Hurewicz2}
A $2-$chain $x$ of $\C_{\T}$ is a 
$2-$cycle of $\C_{\T}$ iff it is the boundary of a $3-$chain of $\C_{\T}.$ 
Equivalently, the row space of the $2-$coboundary matrix $A^{(2)}(\C_{\T})$ and the column space of 
the $3-$coboundary matrix $A^{(3)}(\C_{\T})$ are complementary orthogonal.
Hence the column space of
the $3-$coboundary matrix $A^{(3)}(\C_{\T})$ is the $2-$cycle space of $\C_{\T}.$
\end{theorem}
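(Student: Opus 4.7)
The plan is to deduce Theorem~\ref{Hurewicz2} as a direct corollary of the preceding results, with contractibility of $\T$ doing essentially all the geometric work. First I would observe that the union of the $3$-cells of $\C_\T$ is, by the decomposition hypothesis, the entire tetrahedron $\T$. Since $\T$ is a convex subset of $\mathbb{R}^3$, a straight-line homotopy $h(y,t) = (1-t)y + tx_0$ (for any fixed $x_0 \in \T$) is continuous and witnesses contractibility. Thus the hypothesis of Theorem~\ref{Hurewicz} is satisfied for the $3$-complex $\C_\T$.

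Next I would apply Theorem~\ref{Hurewicz} with $n=3$ and $j=3$: every $2$-cycle of $\C_\T$ is the boundary of a $3$-chain of $\C_\T$. For the converse direction, I would invoke Theorem~\ref{cycle_cob}(2) (equivalently $A^{(2)}A^{(3)} = \mathbf{0}$ from part~(3)): the boundary of any $3$-chain is automatically a $2$-cycle. Together these give the first claim of the theorem: $2$-chain $x$ is a $2$-cycle iff $x = \partial(y)$ for some $3$-chain $y$.

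To pass to matrix language, I would recall from Definition~\ref{def:coboundary} and Theorem~\ref{cycle_cob}(1) that the $j$-th column of $A^{(3)}(\C_\T)$ is precisely the coordinate vector of $\partial(\tau_j)$ in the basis of oriented $2$-cells. Hence the column space of $A^{(3)}(\C_\T)$ is exactly the set of boundaries of $3$-chains, which by the previous step is the $2$-cycle space of $\C_\T$. Finally, Theorem~\ref{cycle_cob}(4) (with $k=2$) says that the row space of $A^{(2)}(\C_\T)$ and the $2$-cycle space of $\C_\T$ are complementary orthogonal; substituting the identification just obtained yields that the row space of $A^{(2)}(\C_\T)$ and the column space of $A^{(3)}(\C_\T)$ are complementary orthogonal.

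There is no serious obstacle here, since the hard analytic content is already packaged in Theorem~\ref{Hurewicz}. The only mildly delicate point I would be careful about is verifying that the \emph{union of the top-dimensional cells} of $\C_\T$ genuinely equals $\T$, so that contractibility of $\T$ transfers to contractibility of the relevant subspace $\hat{\C}_\T$; this is immediate from the assumption that the tetrahedra $\tau_i$ decompose $\T$ with non-overlapping interiors and that $\C_\T$ is the complex generated by them and their faces.
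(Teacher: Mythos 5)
Your proposal is correct and follows essentially the same route as the paper, which likewise obtains the result by noting that the tetrahedron $\T$ is convex, hence contractible, invoking Theorem~\ref{Hurewicz} for the forward direction, and using Theorem~\ref{cycle_cob} for the converse and for the passage to complementary orthogonality of the matrix spaces. Your write-up is in fact somewhat more explicit than the paper's (which states the deduction in a single line and relegates an alternative inductive argument to a remark), but the substance is identical.
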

\begin{remark}
A direct proof of Theorem \ref{Hurewicz2} through induction on the number of regions
into which the tetrahedron $\T$ is divided is routine. However the link to 
Theorem \ref{Hurewicz} appears insightful. See Section \ref{sec:gen}
for an enlargement of the ideas involved.
\end{remark}
\begin{definition}
Let $\V$ be a vector space on $S$ over $\mathbb{R}.$
We say that a vector $x\in \V$ has minimal support (see Definition \ref{def:vector})
iff the support is nonempty and no other $z\in \V$  has its support properly
contained in that of $x.$
\end{definition}

We now have the following useful result about $\C_{\T}.$
\begin{theorem}
\label{minimal_2cycle}
Let the tetrahedra of $\C_{\T}$ be regularly oriented.
Let $x$ be a $2-$cycle of $\C_{\T}$ with minimal support. Then,
\begin{enumerate}
\item $x$ is the boundary of a $3-$chain $\tilde{y} := \sum \alpha_i\tau_i,$ 
where all the nonzero $\alpha_i$ are equal to a constant;
 \item 
$x$ is the multiple of a $0, +1,-1$ vector and 
the support of $x$ is the set of boundary triangles of the union of tetrahedra in the 
support of  $\tilde{y}. $
\end{enumerate}
\end{theorem}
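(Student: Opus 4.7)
The plan is to identify, among all $3$-chains with boundary equal to $x$, the canonical one, and to show it is a uniform scalar multiple of an indicator $3$-chain. By Theorem \ref{Hurewicz2} there exists $\tilde{y} = \sum_i \alpha_i \tau_i$ with $\partial \tilde{y} = x$. Set $P := \{i : \alpha_i \neq 0\}$ and $\tilde{y}' := \sum_{i \in P} \tau_i$. The aim is to show that $\tilde{y} = \lambda \tilde{y}'$ for some nonzero scalar $\lambda$; then both claims of the theorem follow by invoking Theorem \ref{boundary_of_region} applied to $\tilde{y}'$, which presents $\partial \tilde{y}'$ as a $0, \pm 1$ valued $2$-cycle supported exactly on the boundary triangles of $\Reg_P = \bigcup_{i \in P} \tau_i$.

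First I would record the auxiliary fact that $\C_{\T}$ carries no nonzero $3$-cycle. If $\sum_i \gamma_i \tau_i$ is a $3$-cycle and $\tau_k$ has a face $\delta$ on $\partial \T$, then $\delta$ lies in the boundary of no other tetrahedron, so the coefficient $\pm \gamma_k$ of $\delta$ in the boundary must vanish; the same reasoning applied inductively along the adjacency graph of tetrahedra of $\C_{\T}$, which is connected because $\C_{\T}$ simplicially decomposes the connected region $\T$, forces every $\gamma_i = 0$. This yields uniqueness of $\tilde{y}$ above. Next I would verify the support inclusion $\mathrm{supp}(\partial \tilde{y}') \subseteq \mathrm{supp}(x)$ by a short case check: at an internal triangle shared by $\tau_i, \tau_j$, the coefficient in $\partial \tilde{y}$ is $\pm(\alpha_i - \alpha_j)$, which is nonzero whenever exactly one of $i,j$ lies in $P$ (the very condition for the triangle to lie in $\mathrm{supp}(\partial \tilde{y}')$); at a triangle on $\partial \T$ belonging to $\tau_i$ the coefficient in $\partial \tilde{y}$ is $\pm \alpha_i$, nonzero whenever $i \in P$. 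Furthermore $\partial \tilde{y}' \neq 0$, since $\tilde{y}'$ is a nonzero chain and $3$-cycles are trivial.

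Now I would apply minimality: any $2$-cycle $z$ with $\mathrm{supp}(z) \subseteq \mathrm{supp}(x)$ is a scalar multiple of $x$, because picking $\mu$ so that $z - \mu x$ vanishes at some chosen coordinate of $\mathrm{supp}(x)$ produces a $2$-cycle with support strictly contained in $\mathrm{supp}(x)$, which by minimality must be zero. Applied to $z = \partial \tilde{y}'$, this yields $\partial \tilde{y}' = \lambda x$ with $\lambda \neq 0$, hence $\partial(\tilde{y} - \lambda \tilde{y}') = 0$, and by uniqueness $\tilde{y} = \lambda \tilde{y}'$. Thus every nonzero $\alpha_i$ equals $\lambda$, establishing (1); substituting back, $x = \lambda \partial \tilde{y}'$ is $\lambda$ times a $0, \pm 1$ vector, and Theorem \ref{boundary_of_region} identifies its support as the boundary triangles of $\Reg_P = \bigcup_{\tau_i \in \mathrm{supp}(\tilde{y})} \tau_i$, proving (2). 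The main obstacle is the triviality of the $3$-cycle space of $\C_{\T}$, which is used both for uniqueness of $\tilde{y}$ and for the final cancellation; the excerpt does not prove it explicitly, so the peel-from-the-boundary induction above, relying on the existence of boundary tetrahedra and on connectivity of the tetrahedral adjacency graph, is the step that most needs care.
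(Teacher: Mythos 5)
Your proof is correct, and it reaches the conclusion by a genuinely different route from the paper's. The paper restricts the $3-$chain $y$ (with $\partial y = x$) to its \emph{top level set} $\{\tau : y(\tau)=\max y\}$, shows that the boundary of this restriction has support inside $supp(x)$ (at each boundary triangle of the corresponding region the coefficient of $\partial y$ is $\max y - y(\tau_{out})\neq 0$), and then concludes rather tersely that $y$ equals this restriction, without explicitly invoking either the scalar-multiple consequence of minimality or uniqueness of the $3-$chain. You instead take the $0,1$ indicator $\tilde y'$ of the \emph{entire} support $P$, prove the same support inclusion by the simpler observation that a boundary triangle of $\Reg_P$ separates a tetrahedron with nonzero coefficient from one with zero coefficient, and then supply two ingredients the paper leaves implicit: that a $2-$cycle supported inside the minimal support of $x$ must be a scalar multiple of $x$, and that $\C_{\T}$ carries no nonzero $3-$cycle (your boundary-peeling argument; the same fact also follows from Lemmas \ref{lem:connectcelldual}, \ref{lem:incidencerank} and \ref{lem:incidence2} of the later sections, since the $3-$coboundary matrix of $\C_{\T}$ is the transpose of the reduced incidence matrix of the connected graph $\G_{\T}$ and hence has linearly independent columns). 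What your route buys is a fully explicit justification of the step the paper compresses into ``we conclude that $y=\tilde y$''; what it costs is the extra lemma on triviality of $3-$cycles, whose proof does rest on the connectivity of the tetrahedral adjacency graph that you correctly flag as the delicate point. One notational slip, not a gap: you first set $\partial\tilde y' = \lambda x$ but then write $\partial(\tilde y - \lambda\tilde y')=0$ and later $x=\lambda\,\partial\tilde y'$; with your first convention the cancellation should read $\partial(\lambda\tilde y - \tilde y')=0$, giving $\tilde y = \lambda^{-1}\tilde y'$, and the argument is unaffected.
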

\begin{proof}
1. By Theorem \ref{Hurewicz2}, we know that $x=\partial y$ for some $3-$chain of $\C_{\T}.$
Let the support of $y$ be $P$
and, without loss of generality, let the maximum entry of $y$ be positive and equal to, say, $k.$
Consider the vector $\tilde{y}$ defined by
$\tilde{y}(\tau)=y(\tau),$ 
if $y(\tau)=k,$ and 
$\tilde{y}(\tau)=0,$ otherwise.

We claim that the
support of the $2-$cycle $\partial \tilde{y}$ is contained in the support of
 $\partial {y}.$
To see this, we first observe that the support $\tilde{P}$ of $\tilde{y}$ is a set of regularly oriented tetrahedra in $\C_{\T},$
 whose union is the region $\Reg_{\tilde{P}}.$ 
%and a triangle $\delta $ occurs in the support of
%$\partial \tilde{y}$ iff  it is a boundary triangle
%of $\Reg_{\tilde{T}}.$
Suppose a triangle $\delta $ occurs in the support of
$\partial \tilde{y}.$ Then by Theorem \ref{boundary_of_region}, it is a boundary triangle 
of $\Reg_{\tilde{P}}.$
Therefore it occurs as a facet of only one tetrahedron, say $\tau_{in}$ in $\tilde{P}.$ 
It can meet atmost one other tetrahedron  
in $P\setminus\tilde{P}.$ 
If it meets none,  $\delta $ will continue to occur  in the support of $x=\partial y.$
Suppose it meets one other tetrahedron say $\tau_{out}$
in $P\setminus\tilde{P}.$ 
Now $k=y(\tau_{in})> |y(\tau_{out})|.$ So $\partial y(\delta) = k\pm y(\tau_{out})\ne 0.$
In other words, $\delta $ occurs in the support of $x=\partial y.$  
But we know  that $x$ is a $2-$cycle of $\C_{\T}$ with minimal support.
We conclude that $y=  \tilde{y}.$ Now $\tilde{y}=k\hat{y},$ where $\hat{y}$
is a $0,+1$ vector and $x= k\partial \hat{y}.$\\

2.
Since $\hat{y}$
is a $0,+1$ vector, in $\partial \hat{y},$ the boundary triangles of 
$\Reg_{\tilde{P}}$  will acquire value $0,\pm 1,$ and the other triangles
which are common to two tetrahedra in $\tilde{P}$ will occur with 
opposite orientations in those two 
and will cancel out. 
Thus $\partial \hat{y}$  is a $0,\pm 1$ vector which makes
$x$ a multiple of a $0,\pm 1$ vector and it is clear that the support of 
$x$ is the set of boundary triangles of $\Reg_{\tilde{P}}.$
\end{proof}
\begin{definition}
A vector space $\V$ on $S$ over $\mathbb{R}$  is said to be regular or completely 
unimodular iff every minimal support vector is a multiple of a 
$0,\pm 1$ vector.  
\end{definition}

In the following theorem, we present a few well known and useful facts about regular vector spaces.
But first we need a few definitions.
\begin{definition}
A matrix is said to be totally unimodular iff all its subdeterminants are 
$0,\pm 1.$
\end{definition}
\begin{definition}
A matrix whose rows form a basis for a vector space $\V$ is called 
a representative matrix for $\V.$
If, after column permutation, it can acquire the form 
$\bbmatrix{I&|&K},$ where $I$ denotes an  identity matrix of the appropriate order, we say it is a standard representative matrix for $\V.$
The standard representative matrix is said to be with respect to the column subset 
$B,$ if this latter is the set of  columns of an identity matrix.
\end{definition}
%It is clear that a set of columns of a representative matrix of $\V$ is independent
%iff the corresponding set of columns is independent in any other
%representative matrix of $\V.$
\begin{definition}
Let $S$ be a finite set and $\mathcal{I}$ be a family of subsets of  $S$
such that maximal members of $\mathcal{I}$ contained in any subset of $S$ have the same size.
Then $\M:= (S,\mathcal{I})$ is called a matroid on $S.$
Members of $\mathcal{I}$ are called independent sets of $\M.$
A maximal independent set of $\M$  contained in $S$ is called a base of $\M.$
Complements of bases of $\M$ are called cobases of $\M.$
\end{definition}
The most natural example of a matroid is the pair $(S,\mathcal{I}),$ where $S$ 
is the set of columns of a matrix $A,$ and $\mathcal{I}$ is the family of independent
subsets of columns of $A.$
We denote this matroid by $\M(A).$

It is clear that a set of columns of a representative matrix of $\V$ is independent
iff the corresponding set of columns is independent in any other
representative matrix of $\V.$

\begin{definition}
\label{def:vectormatroid}
Let $\V$ be a vector space on $S$ over $\mathbb{R}.$ Let columns of a representative matrix $A$
of $\V$ be identified with elements of $S.$ 
The matroid $\M(A)$ (which is independent of the representative matrix $A$ chosen for $\V$)
is called the matroid $\M(\V)$ associated with $\V.$

%The family of independent subsets
%of $S$ satisfies the conditions for a matroid and
%columns of a representative matrix of $\V$ (equivalently of
%corresponding subsets of $S$) 
%is called the matroid $\M(\V)$ associated with $\V.$

%The independent subsets of $S$ are called independent subsets of $\M(\V)$
%and the maximal independent subsets are called bases of $\M(\V).$
%Complements of bases are called cobases.
\end{definition}
%------------------------------------------------------------------------------------
%The following result is fundamental in matroid theory.
%\begin{theorem}
%Let $\M$ be a matroid on $S$ and let $T\subseteq S.$
%\begin{enumerate}
%\item Let  ${\mathcal{I}}\circ T$ be the family of members of ${\mathcal{I}}$
%contained in $T.$ Then $\M\circ T\equiv (T, {\mathcal{I}}\circ T)$ is a matroid  called the restriction of $\M$ to$T.$
%
%\item Let  ${\mathcal{I}}\times T$ be the family of all members $X$ of ${\mathcal{I}}$
%contained in $T$ with the property that $X\cup \B_{S-T}$ is a member of ${\mathcal{I}}$
%whenever $\B_{S-T}$ is a base of $\M\circ (S-T).$
%Then  $\M\circ T\equiv (T, {\mathcal{I}}\circ T)$ is a matroid called the contraction of $\M$ to$T.$
%
%\item
% Let ${\mathcal{I}}^*$ be the family of subsets of $S$
%which are subsets of cobases of $\M.$ 
%Then $(S, {\mathcal{I}}^*)$ is a matroid.
%\end{enumerate}
%\end{theorem}
%If $\M\equiv (S,\mathcal{I})$ is a matroid The matroid $(S, {\mathcal{I}}^*)$ is denoted by $\M^*$
%and called the matroid dual of $\M.$
%----------------------------------------------------------------------------------
The following elementary result can be proved using ideas from
matroid theory or by translating these ideas to vector spaces.
We omit the proof.
\begin{lemma}
\label{base_circuit}
If $x$ is a minimal support vector in $\V,$ then there exists a base
of $\M(\V),$ which intersects $supp(x)$ in a single element.
\end{lemma}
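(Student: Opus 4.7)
The plan is to work with a fixed representative matrix $A$ for $\V$, so that the rows of $A$ form a basis of $\V$ of size $r := \dim \V$, and to translate the support-minimality of $x$ into a rank condition on column submatrices of $A$. The base we seek will then be produced by a one-step extension-of-independent-set argument inside a carefully chosen column set.

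The key preliminary observation is a rank-support duality: for any $U \subseteq S$, if $A_U$ denotes the column submatrix of $A$ indexed by $U$, then $A_U$ has rank $r$ if and only if no nonzero vector of $\V$ has support contained in $S \setminus U$. Indeed, every nonzero $v \in \V$ is of the form $v = \lambda^T A$ with $\lambda \neq 0$ (because the rows of $A$ are linearly independent), and $supp(v) \subseteq S \setminus U$ is equivalent to $\lambda^T A_U = 0$. Hence the existence of such a $v$ is precisely the linear dependence of the rows of $A_U$, i.e., the failure of $A_U$ to have rank $r$.

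With this in hand, we fix any $e \in T := supp(x)$ and set $U := (S \setminus T) \cup \{e\}$, so that $S \setminus U = T \setminus \{e\}$. Minimality of $supp(x)$ forbids any nonzero vector of $\V$ from being supported in $T \setminus \{e\}$, so the duality yields that $A_U$ has full rank $r$. Since $x(e) \neq 0$, the column of $A$ indexed by $e$ is nonzero, so $\{e\}$ is independent in $\M(\V)$; we then extend $\{e\}$ to a maximal independent subset $B \subseteq U$, which by the rank bound must satisfy $|B| = r$ and hence be a base of $\M(\V)$. By construction $e \in B$ and $B \setminus \{e\} \subseteq S \setminus T$, giving $B \cap supp(x) = \{e\}$, as required. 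The only subtlety to keep straight is the column-row asymmetry, namely that $\V$ lives in the row space of $A$ while $\M(\V)$ is defined from column independence, which is precisely what the rank-support duality above is designed to bridge.
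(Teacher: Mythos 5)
Your proof is correct and complete. The paper explicitly omits its own proof of this lemma, remarking only that it ``can be proved using ideas from matroid theory or by translating these ideas to vector spaces''; your rank--support duality argument is exactly that vector-space translation, and each step checks out --- the injectivity of $\lambda \mapsto \lambda^T A$ (rows of $A$ independent), the conclusion that $A_U$ has rank $r$ because support-minimality of $x$ rules out nonzero vectors of $\V$ supported in $T\setminus\{e\}$, the nonvanishing of column $e$ from $x(e)\neq 0$, and the extension of $\{e\}$ to a maximal independent subset of the columns indexed by $U$, which necessarily has size $r$ and is therefore a base meeting $supp(x)$ only in $e$.
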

\begin{definition}
\label{def:minor}
%Let $\C_1,\C_2$ be $n-$complexes with 
Let $x,y$ be vectors on $S$ over a field $\F.$ Then the dot product 
$<x,y> := \sum_{e\in S}x(e)y(e).$ We say $x,y$ are orthogonal iff $<x,y> =0.$
Let $\K$ be a collection of vectors on $S.$ 
$\K^{\perp}:= \{y:<x,y>=0, x \in \K\}.$
%with the zero vector as a member.
Let $T\subseteq S.$ Then $\K \circ T:= \{x_T:x_T=y|_T, y \in \K\}$
is called the restriction of $\K$ to $T$
and\\
 $\K \times T:= \{x_T:x_T=y|_T, y \in \K, y(e)=0, e\notin T\}$
is called the contraction of $\K$ to $T.$
\end{definition}
\begin{theorem}
\label{minimal_support}
Let $\V$ be a vector space on $S$ over $\mathbb{R}.$
Let $Q_B$ be a standard representative matrix with respect to a base $B$
of $\M(\V).$ Let $e\in B$ and let $x_e$ be the row of $Q_B$ with
$x_e(e)=1$ and $x_e(e')=0, e'\in B\setminus\e.$ 
Then,
\begin{enumerate}
\item $x_e$ has minimal support in $\V;$
\item the collection of minimal support vectors in $\V$ spans $\V.$ Therefore if a vector is orthogonal to all minimal support vectors of $\V,$
then it is orthogonal to all vectors in $\V;$
\item if $\V$ is regular, then $Q_B$ has $0, \pm 1$ entries;
\item if $\V$ is regular,  then $Q_B$ is totally unimodular;
\item if $\V$ is regular, so is $\V^{\perp}.$
\end{enumerate}
\end{theorem}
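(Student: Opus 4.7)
For part 1, I would suppose $z \in \V$ satisfies $\emptyset \ne supp(z) \subseteq supp(x_e)$. Since the rows $\{x_f : f \in B\}$ form a basis of $\V$, write $z = \sum_{f \in B} \alpha_f x_f$. For any $e' \in B \setminus \{e\}$ the defining property $x_f(e') = \delta_{fe'}$ gives $z(e') = \alpha_{e'}$; but $e' \notin supp(x_e) \supseteq supp(z)$, so $\alpha_{e'} = 0$. Hence $z = \alpha_e x_e$, and proper containment of supports would force $\alpha_e = 0$ and $z = 0$, contradicting nonempty support. Part 2 follows immediately: the basic rows $\{x_e : e \in B\}$ are all of minimal support by part 1 and form a basis of $\V$, so the minimal support vectors span $\V$ and any vector orthogonal to all of them is orthogonal to a spanning set. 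For part 3, regularity writes $x_e = \lambda_e \hat{x}_e$ with $\hat{x}_e$ a $0, \pm 1$ vector; evaluating at $e$ yields $1 = \lambda_e \hat{x}_e(e)$ with $\hat{x}_e(e) \in \{-1,0,1\}$, forcing $\lambda_e = \pm 1$, so $x_e$ itself has $0, \pm 1$ entries, and hence so does $Q_B$.

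For part 4, the crucial observation is that parts 1--3 apply verbatim with $B$ replaced by any base $B'$ of $\M(\V)$, so whenever $\V$ is regular, every standard representative matrix $Q_{B'}$ has $0, \pm 1$ entries. Let $N$ be a nonsingular $k \times k$ submatrix of $Q_B$ with row indices $R \subseteq B$ and column indices $C \subseteq S$. Laplace expansion along the identity columns of $Q_B$ lying in $B \setminus R$ reduces the goal $|\det N| = 1$ to the case $C \cap B = \emptyset$. By the basis exchange property in $\M(\V)$, the nonsingularity of $N$ forces $B' := (B \setminus R) \cup C$ to be a base of $\M(\V)$. Reordering rows of $Q_B$ as $(B \setminus R, R)$ and columns of the submatrix $[Q_B]_{B'}$ as $(B \setminus R, C)$, the matrix $[Q_B]_{B'}$ becomes block upper triangular with identity and $N$ on the diagonal, so $\det([Q_B]_{B'}) = \det(N)$. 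From $Q_{B'} = [Q_B]_{B'}^{-1} Q_B$ one reads $[Q_{B'}]_B = [Q_B]_{B'}^{-1}$, which has $0, \pm 1$ entries because $Q_{B'}$ does; in particular the block corresponding to $N^{-1}$ is integer-valued. Since $\det N$ and $\det(N^{-1})$ are then both integers with product $1$, we conclude $\det N = \pm 1$.

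For part 5, write $Q_B = [I \mid K]$ in standard form and take $Q' := [-K^T \mid I]$ as a candidate standard representative of $\V^\perp$ with respect to $S \setminus B$. Orthogonality $Q_B (Q')^T = -K + K = 0$ and the rank count $|S| - |B| = \dim \V^\perp$ confirm $Q'$ is a representative matrix. Part 4 makes $Q_B$ totally unimodular, so every subdeterminant of $K$, and hence of $-K^T$, lies in $\{0, \pm 1\}$; in particular $Q'$ has $0, \pm 1$ entries. The identical construction, applied with any base $B''$ of $\M(\V^\perp)$ (whose complement $S \setminus B''$ is automatically a base of $\M(\V)$, so one may invoke part 4 there), shows that every standard representative $Q_{B''}$ of $\V^\perp$ has $0, \pm 1$ entries. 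Now, given a minimal support vector $v$ of $\V^\perp$, Lemma \ref{base_circuit} furnishes a base $B''$ of $\M(\V^\perp)$ meeting $supp(v)$ in a single element $e''$; repeating the part 1 argument inside $\V^\perp$ with base $B''$ identifies $v$ as a scalar multiple of the row of $Q_{B''}$ indexed by $e''$, which is a $0, \pm 1$ vector. Hence $\V^\perp$ is regular.

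The main obstacle is part 4: bridging from ``every standard representative has $0, \pm 1$ entries'' to genuine total unimodularity requires the matroid exchange step together with the block-triangular factorization of $[Q_B]_{B'}$ that lets one read $\det N$ off a basic submatrix determinant and $N^{-1}$ off the inverse. Once part 4 is available, parts 3 and 5 amount to bookkeeping with the $[I \mid K]$ form and an appeal to Lemma \ref{base_circuit}.
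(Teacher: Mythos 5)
Your proposal is correct and follows essentially the same route as the paper: the identity-column coordinate argument for parts 1--3, the reduction of a general nonsingular submatrix to a full basic submatrix followed by the ``two integer $0,\pm 1$ matrices whose determinants multiply to $1$'' step for part 4, and the $\bbmatrix{I&|&K}\mapsto\bbmatrix{-K^T&|&I}$ duality together with Lemma \ref{base_circuit} for part 5. You are somewhat more explicit than the paper in justifying that $B'=(B\setminus R)\cup C$ is a base via matroid exchange and in spelling out why a minimal support vector of $\V^{\perp}$ is a multiple of a row of some $Q_{B''}$, but these are elaborations of the same argument rather than a different one.
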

\begin{proof}
1. Let $Q_B$ be an $r\times n$ matrix. Let $y$ be a vector in $\V$ such that $supp(y)$ is a proper subset of $supp(x_e).$
Now $\bf{y}= \bf{z}^TQ_B$ for some vector $z.$
Since $B$ is the set of  columns of an identity matrix, 
we must have $y(e')=z(e'), e'\in B.$ 
We have $supp(x_e)\cap B= \{e\}\supseteq supp(y)\cap B.$
So $z(e')=0, e'\ne e, e'\in B.$ It follows that if $y(e)\ne 0,$
then $y = z(e)x_e$ so that $supp(y)= supp(x)$ and if $y(e)= 0,$ then $y$  must be the zero vector.
Hence $x_e$ has minimal support in $\V.$

2. This follows since the rows of $Q_B$ form a basis for $\V$ and
have minimal support.
 
3. Since $\V$ is regular and $x_e$ has minimal support in $\V,$
we must have $x_e$ as a multiple of a $0, \pm 1$ vector.
But $x_e(e)=1.$
So $x_e$ must be a $0, \pm 1$ vector.

4. We will first show that every $r\times r$ submatrix has determinant $0,\pm 1.$
If the matrix is singular the determinant is zero. So let us suppose
that a submatrix $M$ is nonsingular with columns $B'.$
%Since $Q_B$ has $0, \pm 1$ entries, $det(M)$ is an integer.
Consider the standard representative matrix $Q_{B'}.$
Let $N$ be the submatrix of $Q_{B'}$ with columns $B.$  
It is clear that $Q_B= MQ_{B'}$ and $NQ_B= Q_{B'},$
so that $MN=I,$ and $det(M)\times det(N)=1.$
Since $Q_B, Q_{B'}$ have $0, \pm 1$ entries, $det(M), det(N)$ are integers.
It follows that $det(M)$ is $ \pm 1.$

Next consider a $k\times k$ submatrix $P$ of $Q_B$ where $k\leq r.$
We can extend the column set of $Q_B$ corresponding to columns of $P$ using only columns
in $B$ so that we get a  nonsingular $r\times r$ submatrix $M'$.
Since the columns in $B$ are columns of the identity matrix,
we must have 
$\pm 1= det(M')=\pm det(P).$
We conclude that $ det(P)$ is $\pm 1.$

5. It is clear that $\bbmatrix{I&|&K}$ is a representative matrix
of $\V$ iff  $\bbmatrix{-K^T&|&I}$ is a representative matrix for $\V^{\perp}.$
%Since, by Lemma \ref{} every minimal support vector is a multiple of a row of some
%standard representative matrix and we have shown above that standard representative
%matrices are $0,\pm 1, is a representative matrix for $\V^{\perp}.$
Since, by Lemma \ref{base_circuit}, every minimal support vector is a multiple of a row of some
standard representative matrix, and,  we have shown above that, standard representative
matrices of $\V$ have $0,\pm 1$ entries, the result follows.
\begin{remark}
If the $j-$coboundary space is regular, it may not in general
be true that the $j-$coboundary matrix is totally unimodular.
For instance, the matrix\\
$$\bbmatrix{1&0&0&0&1\\1&-1&0&0&0\\0&1&1&0&0\\0&0&1&-1&0\\0&0&0&1&1}$$
is a submatrix of the  $2-$coboundary matrix of a $2-$complex embedded in $\mathbb{R}^3.$
But, as can be verified, it has determinant $2.$
For a discussion of these ideas, relating total unimodularity
to orientability, see \cite{dey}.
\end{remark}

%Now if $e\in B \cap supp(y),$ we must have 
%$y(e)= z(e)
%Since columns $B$  
\end{proof}
We have the following well known and fundamental results whose proofs, however, are simple.
\begin{theorem}
\label{perp_restrict_contract}
Let $\V$ be a vector space on a finite set $S$ and let $T\subseteq S.$  Then
\begin{enumerate}
\item $\V^{\perp\perp}=\V;$
\item $(\V\circ T)^{\perp}= \V^{\perp} \times T;$
\item $(\V\times T)^{\perp}=\V^{\perp} \circ T.$
\end{enumerate}
\end{theorem}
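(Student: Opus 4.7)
The plan is to handle (1) first by a dimension count, then prove (2) by double inclusion, and finally deduce (3) from (2) using (1).

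For (1), I would observe that $\V \subseteq \V^{\perp\perp}$ is automatic from the definition of orthogonality (every $x \in \V$ satisfies $\langle x, y \rangle = 0$ for every $y \in \V^\perp$). The equality then follows from dimension counting: using the rank-nullity theorem applied to the linear map $\mathbb{R}^S \to \V^*$ sending $y$ to the functional $x \mapsto \langle x, y \rangle$, one gets $\dim \V^\perp = |S| - \dim \V$. Applying this twice yields $\dim \V^{\perp\perp} = \dim \V$, so the inclusion is an equality.

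The main work is (2). For the inclusion $\V^\perp \times T \subseteq (\V\circ T)^\perp$, take $y \in \V^\perp \times T$. By Definition \ref{def:minor}, $y$ is the restriction to $T$ of some $\hat{y}\in \V^\perp$ that vanishes off $T$. For any $x_T = x|_T \in \V\circ T$ with $x\in \V$, we compute
\[
\langle x_T, y\rangle = \sum_{e \in T} x(e)\hat{y}(e) = \sum_{e\in S} x(e)\hat{y}(e) = \langle x, \hat{y}\rangle = 0,
\]
where the second equality uses $\hat{y}(e)=0$ for $e\notin T$. For the reverse inclusion, take $y \in (\V\circ T)^\perp$ and define $\hat{y}$ on $S$ by $\hat{y}|_T := y$ and $\hat{y}(e):=0$ for $e\notin T$. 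For any $x \in \V$, its restriction $x|_T$ lies in $\V\circ T$, so
\[
\langle \hat{y}, x\rangle = \sum_{e\in T} y(e)x(e) = \langle y, x|_T\rangle = 0.
\]
Thus $\hat{y} \in \V^\perp$, and since $\hat{y}$ vanishes off $T$ and restricts to $y$ on $T$, we have $y \in \V^\perp \times T$.

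For (3), I would apply (2) to the space $\V^\perp$ and the same $T$, yielding $(\V^\perp \circ T)^\perp = \V^{\perp\perp} \times T = \V \times T$, where the last equality uses (1). Taking perps of both sides and applying (1) once more on the left gives $\V^\perp \circ T = (\V\times T)^\perp$, which is (3). There is essentially no obstacle here; the only subtlety is bookkeeping the distinction between a vector defined on $T$ and its zero-extension to $S$, which is implicit in the definitions of $\circ T$ and $\times T$ and must be kept straight throughout.
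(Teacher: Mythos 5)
Your proof is correct. The paper itself states this theorem as ``well known and fundamental'' and omits the proof entirely, so there is nothing to compare against; your argument --- the automatic inclusion plus dimension count for (1), double inclusion via zero-extension for (2), and the formal deduction of (3) from (1) and (2) applied to $\V^{\perp}$ --- is the standard one and handles the $T$-versus-$S$ bookkeeping carefully.
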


%-----------------------------------------------------------------------------------------
%\begin{theorem}
%Let $\V$ be a vector space on a finite set $S$ and let $T\subseteq S.$  Then
%\begin{enumerate}
%\item $\M(\V^{\perp})=(\M(\V))^*.$
%\item $\M(\V^{\perp})=(\M(\V))^*.$
%\end{enumerate}
%
%\end{theorem}

%----------------------------------------------------------------------------------

Let $\C$ be a $2-$complex embedded in $\mathbb{R}^3,$ $\T$ be a tetrahedron whose interior 
contains $\C,$ and let $\C_{\T}$ be defined as earlier
with the set of triangles $S:= S^{(2)}(\C)$ contained 
in the set of triangles  $S^{(2)}(\C_{\T}).$
We have 
\begin{lemma}
\label{cycles_cob_complex_rest}
Let $\V_{\T}$ be the space of $2-$coboundaries of $\C_{\T}.$
Then 
\begin{enumerate}
\item $\V_{\T}\circ S$ is the space of $2-$coboundaries of $\C;$
\item $\V_{\T}^{\perp}$ is the space of $2-$cycles of $\C_{\T}$
and $\V_{\T}^{\perp}\times S$ is the space of $2-$cycles of $\C.$
\end{enumerate}
\end{lemma}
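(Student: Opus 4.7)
The plan is to identify each piece of the lemma with a clean invocation of results already established in the paper, namely Theorem \ref{cycle_cob} (cobounday/cycle duality) and Theorem \ref{perp_restrict_contract} (restriction/contraction duality).

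\textbf{Part 1.} For the first assertion, I would argue directly at the level of matrices. The 2-coboundaries of $\C_\T$ form the row space of $A^{(2)}(\C_\T)$; restricting to the columns indexed by $S = S^{(2)}(\C)$ produces the submatrix $A^{(2)}(\C_\T)|_S$, whose row space is exactly $\V_\T \circ S$ by the definition of restriction. The key observation is that because $\C$ is itself a complex, every edge that bounds a triangle in $S$ already lies among the 1-cells of $\C$. Consequently, for any edge $e$ of $\C_\T$ that is not an edge of $\C$, no triangle in $S$ has $e$ on its boundary, so the corresponding row of $A^{(2)}(\C_\T)|_S$ is identically zero. Deleting these zero rows leaves precisely the matrix $A^{(2)}(\C)$, whose row space is the space of 2-coboundaries of $\C$. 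Hence $\V_\T \circ S$ equals the 2-coboundary space of $\C$.

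\textbf{Part 2.} The identification of $\V_\T^{\perp}$ with the space of 2-cycles of $\C_\T$ is immediate from Theorem \ref{cycle_cob}(4) applied to the 3-complex $\C_\T$: the space of 2-coboundaries and the space of 2-cycles are complementary orthogonal. For the remaining identity, I would invoke Theorem \ref{perp_restrict_contract}(2), which gives
\[
(\V_\T \circ S)^{\perp} \;=\; \V_\T^{\perp} \times S.
\]
By Part 1, the left-hand side is the orthogonal complement of the 2-coboundary space of $\C$, and by Theorem \ref{cycle_cob}(4) applied now to $\C$, this is precisely the space of 2-cycles of $\C$.

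The argument is essentially bookkeeping and carries no real obstruction; the only subtle point is the matrix-level observation in Part 1, namely that $\C$ being closed under taking faces forces the ``spurious'' rows of $A^{(2)}(\C_\T)|_S$ to vanish, so that restriction of coboundaries in $\C_\T$ honestly produces coboundaries in $\C$ rather than merely a superset. Once that is in hand, the duality $(\V \circ T)^{\perp} = \V^{\perp} \times T$ converts Part 1 into Part 2 with no additional work.
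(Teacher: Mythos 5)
Your proposal is correct and follows essentially the same route as the paper: Part 1 is the identical matrix-level observation that deleting the columns of $A^{(2)}(\C_{\T})$ outside $S$ leaves $A^{(2)}(\C)$ up to zero rows (you merely spell out why those extra rows vanish, which the paper leaves implicit), and Part 2 is the same combination of Theorem \ref{cycle_cob} with the identity $(\V_{\T}\circ S)^{\perp}=\V_{\T}^{\perp}\times S$ from Theorem \ref{perp_restrict_contract}.
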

\begin{proof}
1. We have,
$\V_{\T}$ as the row space of $A^{(2)}(\C_{\T}),$
and $\V_{\T}\circ S$ as the row space of the matrix obtained 
by deleting the columns of $A^{(2)}(\C_{\T})$
that are not in $S.$ But, except for zero rows, this matrix is the same as the coboundary matrix 
$A^{(2)}(\C),$
 of $\C.$
The result follows.

2. The spaces of $2-$coboundaries and of $2-$cycles of $\C_{\T}$ 
are complementary orthogonal by Theorem \ref{cycle_cob}.
Therefore $\V_{\T}^{\perp}$ is the space of $2-$cycles of $\C_{\T}$
and $(\V_{\T}\circ S)^{\perp}$ is the space of $2-$cycles of $\C.$
By Theorem \ref{perp_restrict_contract},
we therefore have that $\V_{\T}^{\perp}\times S$ is the space of $2-$cycles of $\C.$
\end{proof}
Let  $\V_{\T}$ be the space of $2-$coboundaries of $\C_{\T}.$
%In the case of the $2-$subcomplex $\C_2$ of $\C_{\T},$  
By Theorem \ref{minimal_2cycle},
we know that every minimal support $2-$cycle of  $\C_{\T}$ is a multiple of a 
$0,\pm 1$ vector which latter is the boundary vector of the union of a subset of regularly
oriented tetrahedra of $\C_{\T}.$
Let $\K_{\T}$ be the collection of boundary vectors of regions in $\mathbb{R}^3$ 
which are unions of tetrahedra of $\C_{\T}.$
By Theorem \ref{minimal_support}, such vectors span the space $\V_{\T}^{\perp}$ of $2-$cycles of $\C_{\T}.$
Let $\K\subseteq \K_{\T},$ be the collection of  
minimal support vectors with support contained in $S=S^{(2)}(\C).$ 
Clearly $\K$ spans 
the subspace of vectors of $\V_{\T}^{\perp}$  whose support is contained in $S,$
and therefore $\K\times S$
spans the  space $\V_{\T}^{\perp}\times S.$
Suppose a vector on $S$ is orthogonal to 
$\K\times S.$
%the restriction to $S$ of all 
%minimal support vectors with support contained in $S.$
Then it must also be orthogonal to $\V_{\T}^{\perp}\times S, $
i.e., (by Lemma \ref{cycles_cob_complex_rest})
to all $2-$cycles of $\C,$ i.e., must be a $2-$coboundary of 
$\C.$
Thus we have
\begin{lemma}
\label{cycle_orth}
Let the tetrahedra of $\C_{\T}$ be regularly oriented.
Let $\K$ be the collection of boundary vectors of regions in $\mathbb{R}^3$
which are unions of tetrahedra of $\C_{\T},$
and whose support is contained in the set of triangles $S=S^{(2)}(\C).$
If a vector on $S$ is orthogonal to all vectors of
$\K\times S,$ 
then it is a $2-$coboundary of
$\C.$
%orthogonal to all $2-$cycles of $\C.$
\end{lemma}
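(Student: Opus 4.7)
The plan is to deduce the lemma by identifying $\K\times S$ as a spanning set of the $2$-cycle space of $\C$, and then invoking the complementary orthogonality of cycles and coboundaries. I would break the argument into three short steps, each relying on a previously established result, so that the lemma drops out without further calculation.

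First, I would show that $\K_{\T}$ spans $\V_{\T}^{\perp}$, the $2$-cycle space of $\C_{\T}$. By Theorem \ref{minimal_2cycle}, every minimal-support $2$-cycle of $\C_{\T}$ is, up to a scalar, the boundary vector of a region $\Reg_P$ that is a union of regularly oriented tetrahedra, and hence lies in $\K_{\T}$. By Theorem \ref{minimal_support}(2), the minimal-support vectors of $\V_{\T}^{\perp}$ already span $\V_{\T}^{\perp}$, so $\K_{\T}$ does as well.

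Next, I would restrict to the sub-collection $\K\subseteq \K_{\T}$ of those boundary vectors whose support is contained in $S=S^{(2)}(\C)$. The subspace of $\V_{\T}^{\perp}$ consisting of vectors supported in $S$ coincides with $\V_{\T}^{\perp}\times S$ (viewed on the ambient index set), and it is itself regular because $\V_{\T}^{\perp}$ is regular by Theorem \ref{minimal_support}(5) together with Theorem \ref{minimal_2cycle}. Its minimal-support vectors — which are exactly the elements of $\K$ — therefore span it, so $\K\times S$ spans $\V_{\T}^{\perp}\times S$. By Lemma \ref{cycles_cob_complex_rest}(2), this latter space is precisely the $2$-cycle space of $\C$.

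Finally, any vector on $S$ orthogonal to every element of $\K\times S$ is orthogonal to its span, i.e., to every $2$-cycle of $\C$; by Theorem \ref{cycle_cob}(4), the $2$-coboundary and $2$-cycle spaces of $\C$ are complementary orthogonal, which forces the vector to be a $2$-coboundary of $\C$, completing the proof. The only point requiring a bit of care — and the closest thing to an obstacle — is the second step: one must justify that restricting the family of minimal-support boundary vectors of $\C_{\T}$ to those supported in $S$ still yields a spanning set of the support-restricted subspace $\V_{\T}^{\perp}\times S$. This is not automatic in a general vector space, but it is guaranteed here by the regularity of $\V_{\T}^{\perp}$, which I would invoke explicitly.
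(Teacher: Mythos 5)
Your proposal is correct and follows essentially the same route as the paper: identify $\K\times S$ as a spanning set of the $2$-cycle space of $\C$ via Theorems \ref{minimal_2cycle} and \ref{minimal_support} together with Lemma \ref{cycles_cob_complex_rest}, then conclude by the complementary orthogonality of $2$-cycles and $2$-coboundaries (Theorem \ref{cycle_cob}). The one point where you diverge slightly is the appeal to regularity in the second step, which is in fact not needed: Theorem \ref{minimal_support}(2) holds for an arbitrary vector space, and a minimal-support vector of the subspace of $\V_{\T}^{\perp}$ consisting of vectors supported in $S$ is automatically a minimal-support vector of $\V_{\T}^{\perp}$ itself (any nonzero vector of $\V_{\T}^{\perp}$ with strictly smaller support would again be supported in $S$), hence a multiple of an element of $\K$.
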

%\begin{proof}
%If a vector is orthogonal to all vectors in $\K$ 
%\end{proof}
\section{Electrical $2-$networks}
\label{sec:electrical}
%\begin{definition}
In this section we define electrical $2-$networks, prove a generalized Tellegen's Theorem
and describe procedures of solution for them.
\subsection{Flux and current adjusted mmf}
\label{def:flux_mmf}
Let $B$ and $H$ be the magnetic flux density
and magnetic intensity vectors defined on $\mathbb{R}^3.$
Let $\delta$ be an oriented triangle in $\mathbb{R}^3.$
Then the flux $\phi_{\delta}$ through the triangle, in the direction of the normal 
consistent with the orientation of the triangle (see Figure \ref{fig:triangle_orientation}), is equal to $ \int _{\delta}B.ds.$
The mmf $m_{\delta}'$ is the mmf associated with the  triangle in the direction of the normal.
This is the average value of $H\Delta l$ over the triangle, i.e., the value
of $\frac{\int _{\delta}H\Delta l.ds}{Area\  of\  \delta}.$ 
The triangles we work with are zero thickness idealizations of physical triangles with nonzero thickness $\Delta l.$
% which varies over points in the physical triangle.
%Let us say that a contour $C$ is traversed around an edge in a direction consistent with the orientation of the edge
From Maxwell's equations, we have the relation          
$\oint_CH.dl = i,$ where $i$ is the current through an edge $e$ and $C$ is a contour around
the edge traversed in a direction consistent with the orientation of the edge.
(If $O$ is a point in $e,$ $A,B,$ are points encountered successively as we go around the contour,
the vector product $OA\times OB$ is in the direction of $e.$)
Suppose as in Figure \ref{fig:trianglesedge1}, we have triangles $\delta_1, \delta_2, \delta_3$
incident at and around an edge $e.$ Let the triangle orientation be such that in each case
it agrees with the orientation of $e$
and let a current $i_{\delta_j}$ circulate  around each triangle $\delta_j$ in the direction
of orientation of the triangle. 
Let us go around  $e,$ along a contour $C$ which intersects all the triangles incident 
at $e, $ in a direction consistent with the orientation of $e.$
%(If $O$ is a point in $e,$ $A,B,$ are points encountered successively as we go around the contour,
%the vector product $OA\times OB$ is in the direction of $e.$)
The only nonzero contribution, to the integral $\oint_CH.dl,$ 
will be where the contour intersects the triangles. This is because
we are assuming that except for the triangles, the medium has infinite permeabilty and 
flux everywhere is finite. 
So the equation $\oint_CH.dl = i,$ reduces to
$m_{\delta_1}'+m_{\delta_2}'+m_{\delta_3}'=i_{\delta_1}+i_{\delta_2}+i_{\delta_3}=i.$
This equation can be rewritten as 
$m_{\delta_1}+m_{\delta_2}+m_{\delta_3}=0,$
where $m_{\delta_j}:= m_{\delta_j}'-i_{\delta_j}.$
We call the quantity $m_{\delta_j}:= m_{\delta_j}'-i_{\delta_j},$
the current adjusted mmf associated with ${\delta_j}.$

%\end{definition}

We state the following constraints in relation to the $3-$complex $\C_{\T}$
and the $2-$complex $\C$, both  embedded in $\mathbb{R}^3.$
\\

{\it{Constraint $K_1$(Generalized KVL)}

The net outward flux $\phi_S$ through $2-$cells of $\C$ which are boundaries of regions
of tetrahedra of $\C_{\T}$ is zero.
Equivalently, the flux vector $\phi_S$ is orthogonal to $2-$cycles of $\C$ which are boundaries of regions of tetrahedra of $\C_{\T}.$
\\

{\it{Constraint $K_2$}(Generalized KCL)}

If a set of triangles $S_e$ of $\C$ are incident at an edge $e$ and oriented to agree
with the orientation of $e,$
then $\sum_{\delta_i\in S_e}m_{\delta_i}=0,$ where $m_{\delta_i}$ is the `current adjusted
mmf' defined above associated with $\delta_i.$
Equivalently, the current adjusted
mmf vector $m_S$ is orthogonal to the rows of the coboundary matrix $A^{(2)}(\C)$
and is therefore a $2-$cycle of $\C.$ 
\\}

\subsection{Generalized Tellegen's Theorem}
\label{subsec:tellegen}
We are now in a position to define an electrical $2-$network $\N.$
\begin{definition}
\label{def:2network}
An electrical $2-$network $\N$ is a pair $(\C_S,\D_S),$
where\\
$\C_S$ is a $2-$complex embedded in $\mathbb{R}^3$ with $S$ as its $2-$cells,\\
$\D_S, $ the device characteristic, is a collection of ordered pairs $(\phi_S,m_S)$ of vectors on $S$ over $\mathbb{R}.$
A solution of $\N$ is an ordered pair $(\phi_S,m_S)$ 
satisfying the following constraints
\begin{enumerate}
\item $\phi_S $ satisfies Constraint $K_1;$
\item $m_S $ satisfies Constraint $K_2;$
\item  $(\phi_S,m_S)\in \D_S.$
\end{enumerate}

\end{definition}

We now have the following `generalized Tellegen's Theorem' for an electrical $2-$network $\N.$
\begin{theorem}
\label{gtellegen}
Let $\N:= (\C_S,\D_S),$
be an electrical $2-$network.
Let $\V^{\phi}_S, \V^{m}_S$ be the spaces of vectors which satisfy 
constraints $K_1,K_2$ respectively for $\N.$
Then $\V^{\phi}_S, \V^{m}_S$ are complementary orthogonal, being respectively
the $2-$coboundary and $2-$cycle spaces of $\C.$
\end{theorem}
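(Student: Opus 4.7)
The plan is to match each of the two constraint spaces to one of the two complementary orthogonal spaces provided by Theorem \ref{cycle_cob}, and then simply invoke that theorem to conclude complementarity. The only subtlety is that Constraint $K_1$ is stated via orthogonality to a \emph{subset} of the $2$-cycles of $\C$, namely those that arise as region boundaries in $\C_{\T}$, whereas the cycle/coboundary duality is formulated using all $2$-cycles; bridging this gap is where Lemma \ref{cycle_orth} must do the work.

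First I would handle $\V^{m}_S$. Constraint $K_2$ says precisely that $m_S$ is orthogonal to every row of $A^{(2)}(\C)$, equivalently $A^{(2)}(\C)\, m_S = 0$. By Theorem \ref{cycle_cob}(1), this is exactly the condition that $m_S$ is a $2$-cycle of $\C$. So $\V^{m}_S$ is the $2$-cycle space of $\C$, with no further argument needed.

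Next I would handle $\V^{\phi}_S$, which is the less trivial inclusion. The easy direction: every $2$-coboundary of $\C$ is orthogonal to \emph{every} $2$-cycle of $\C$ by Theorem \ref{cycle_cob}(4), and in particular to the boundary vectors of unions of tetrahedra of $\C_{\T}$ whose support lies in $S$; hence every $2$-coboundary of $\C$ satisfies $K_1$. For the reverse direction, suppose $\phi_S$ satisfies $K_1$, i.e.\ is orthogonal to every such region-boundary $2$-cycle. This is exactly the hypothesis of Lemma \ref{cycle_orth} (with $\K$ the described collection of region-boundary vectors supported in $S$), which concludes that $\phi_S$ is a $2$-coboundary of $\C$. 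Thus $\V^{\phi}_S$ coincides with the $2$-coboundary space of $\C$.

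Finally, having identified $\V^{\phi}_S$ as the $2$-coboundary space and $\V^{m}_S$ as the $2$-cycle space of $\C$, Theorem \ref{cycle_cob}(4) immediately gives that these two spaces are complementary orthogonal on $S$, completing the proof.

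The main obstacle is really the reverse inclusion in the $\phi_S$ step: the statement of $K_1$ uses only a restricted family of $2$-cycles (the ones coming from tetrahedral regions of the ambient $\C_{\T}$), so without Lemma \ref{cycle_orth} it would not be clear that orthogonality to this restricted family is as strong as orthogonality to all $2$-cycles of $\C$. All the serious topological content, including the contractibility of $\T$ and the regularity (complete unimodularity) of the relevant spaces, has already been absorbed into Lemma \ref{cycle_orth} via Theorems \ref{Hurewicz2}, \ref{minimal_2cycle}, \ref{minimal_support} and \ref{perp_restrict_contract}, so the remaining proof is a short bookkeeping argument.
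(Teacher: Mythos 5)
Your proof is correct and follows essentially the same route as the paper: identify $\V^{m}_S$ as the $2$-cycle space directly from $K_2$, use Lemma \ref{cycle_orth} to upgrade orthogonality to region-boundary cycles into membership in the $2$-coboundary space for $K_1$, and conclude by Theorem \ref{cycle_cob}. Your write-up is in fact slightly more explicit than the paper's in spelling out the easy direction of the $K_1$ equivalence.
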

\begin{proof}
Constraint $K_1$ states that the flux vector $\phi_S$ is orthogonal to $2-$cycles of $\C$ which are boundaries of regions of tetrahedra of $\C_{\T}.$
This, by Lemma \ref{cycle_orth}, is equivalent to saying that it is orthogonal to all  $2-$cycles of $\C$ and therefore is a $2-$coboundary of $\C.$
Thus $\V^{\phi}_S$ is the space of $2-$coboundaries of $\C.$

Constraint $K_2$ states that the current adjusted mms vector $m_S$ is a $2-$cycle of $\C.$
Thus $\V^{m}_S$ is the space of $2-$cycles of $\C.$

Since by Theorem \ref{cycle_cob}, the spaces of $2-$cycles and of $2-$coboundaries of $\C$
are complementary orthogonal, the result follows.

\end{proof}
Let $A:= A^{(2)}(\C),$
be the $2-$coboundary matrix of $\C.$
Using Theorem \ref{gtellegen}, we can restate the constraints of the network $\N$ as follows.
\begin{equation}
\label{network_constraints_2}
\begin{matrix}
A\ {\bf m_S}&=&{\bf 0}.\\
A^T\ {\bf y}&=&{\bf\phi_S}\\
(\phi_S,m_S)&\in &\D_S.
\end{matrix}
\end{equation}
In the special case where the flux is linearly related to mmf, we may write
\begin{equation}
G\ {\bf\phi_S}= [{\bf m_S}+{\bf I_S}],
\end{equation}
where $G$ is a positive diagonal matrix with its $(j,j)$ entry being the 
reluctance of the triangle $\delta_j,$ $m_S$ is the current adjusted mmf vector and
$\phi_S$ is the flux vector associated with $S=S^{(2)}(\C).$
\subsection{Solution of an electrical $2-$network}
\label{subsec:solution}
We now discuss the procedure for solving Equation \ref{network_constraints_2},
when the device characteristic is linear.

Let $\hat{A}$ be composed of a maximal linearly independent set of rows of the $2-$coboundary matrix $A$ of $\C.$
We have, writing $\hat{A}{\bf m_S}={\bf 0}$ as $\hat{A} [{\bf m_S}+{\bf I_S}]=\hat{A}{\bf I_S}$
and $A^T\ {\bf y}={\bf\phi_S}$ as $\hat{A}^T\ {\bf z},$
\begin{equation}
\label{network_constraints_3}
\begin{matrix}
 \hat{A}\ [{\bf m_S}+{\bf I_S}]&=&\hat{A}\ {\bf I_S},i.e.,\\
\hat{A}\ G\ {\bf\phi_S}&=&\hat{A}\ {\bf I_S}, i.e., \\
\end{matrix}
\end{equation}
\begin{equation}
\label{network_constraints_3}
\begin{matrix}
\hat{A}\ G\ \hat{A}^T\ {\bf z}&=&\hat{A}\ {\bf I_S}. 
\end{matrix}
\end{equation}

%A^T\ {\bf y}&=&{\bf\phi_S}\\
%(\phi_S,m_S)&\in &\D_S.

When $G$ is positive diagonal, since the rows of $\hat{A}$ are linearly
independent, the matrix $\hat{A}G\hat{A}^T$ is positive definite and so invertible
so that a unique solution for $z$ and therefore also for the variables $\phi_S,m_S,$ is guaranteed. If $\C$ is large the matrix $\hat{A}G\hat{A}^T$ would be sparse so that existing efficient techniques for solution of
such equations can be adopted. As opposed to conventional graph based electrical
networks, the main extra computational effort is in computing
$\hat{A}$ from $A.$ In the case of graph based circuits this computation 
is trivial since we simply have to drop one row per connected component of 
the graph. Here it appears as though we require linear algebraic computations
which though efficient are not trivial.
There are other advantages to dealing with graph based circuits - the equations
can be `preprocessed' in near linear time to make them more suitable 
for, say, parallel processing. Such convenient structures are obtained, for instance, through `multiport decomposition' and `topological transformation'
(\cite{book}). The algorithms required are graph based.
It is not immediately clear if anything similar can be done in the present situation.

We will show, in subsequent pages, that the present network can actually be reduced
to a graph based `dual network'. To appreciate what would be the procedure 
after such conversion let us discuss a cycle-based rather than coboundary-based
(as above) equations for the network.

\subsection{Cycle based equations for the network $\N$}
\label{subsec:cycleeqns}
Let $P$ be a representative matrix of the $2-$cycle space of $\C.$
Using Theorem \ref{gtellegen}, we can restate the constraints of the network $\N$ as follows.
\begin{equation}
\label{network_constraints_4}
\begin{matrix}
P\ {\bf \phi_S}&=&{\bf 0}.\\
P^T\ {\bf q}&=&{\bf m_S}\\
(\phi_S,m_S)&\in &\D_S.
\end{matrix}
\end{equation}
In the special case where the flux is linearly related to mmf, we may write
\begin{equation}
{\bf\phi_S}= R\ [{\bf m_S}+{\bf I_S}].
\end{equation}
Let us, for  handling more general but analogous situations,
replace this equation by
\begin{equation}
{\bf\phi_S}+{\bf E_S}= R\ [{\bf m_S}+{\bf I_S}],
\end{equation}
where ${\bf E_S}$ is an additional source term.
We have, 
%writing $\hat{A}{\bf m_S}=0$ as $\hat{A} [{\bf m_S}+{\bf I_S}]=\hat{A}{\bf I_S}$
%and $A^T\ {\bf y}={\bf\phi_S}$ as $\hat{A}^T\ {\bf z},$
%\hspace{4.5cm}
\begin{equation}
%\label{network_constraints_3}
\begin{matrix}
 P\ {\bf \phi_S}&=&{\bf 0},i.e.,\\
 P\ [{\bf \phi_S}+{\bf E_S}]&=&P\ {\bf  E_S},i.e.,\\
P\ R\ [ {\bf m_S}+{\bf I_S}]&=&P\ {\bf  E_S}, i.e., \\
P\ R\ {\bf m_S}&=&- P\ R\ {\bf I_S}+ P\ {\bf  E_S}, i.e.,
%P\ R\ P^T\ {\bf q}&=&- P\ R\ {\bf I_S}, i.e.,
\end{matrix}
\end{equation}

\begin{equation}
\label{network_constraints_5}
\begin{matrix}
P\ R\ P^T\ {\bf q}&=&- P\ R\ {\bf I_S} + P\ {\bf  E_S}.
\end{matrix}
\end{equation}
When $R$ is positive diagonal, since the rows of $P$ are linearly
independent, the matrix $P R P^T$ is positive definite and so invertible
so that a unique solution is guaranteed. 
We will show that the matrix $P$ can be chosen as the reduced incidence matrix
(dropping one row per connected component from the incidence matrix) of a graph
which is the `matroid dual' of $\C.$
Thus Equation \ref{network_constraints_4} will look exactly like the constraints
of a graph based network.
We will build this latter graph in linear time from $\C .$
\section{Triangle adjacency graph $tag(\C)$ and the graph $\G_{tag(\C)}$ }
\label{sec:triangle}
\begin{figure}
\centering
 \includegraphics[width=1.5in]{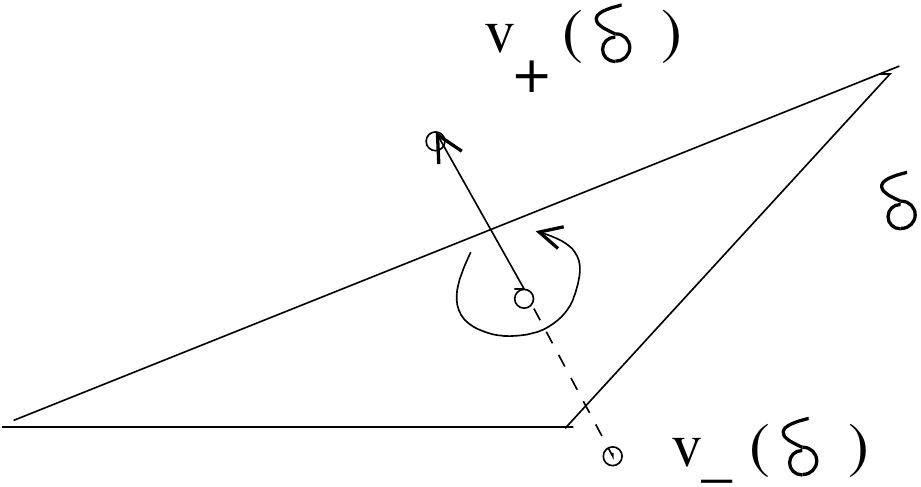}
 \caption{$v_+(\delta)$ and $v_-(\delta)$ for triangle $\delta $}
 \label{fig:triangle_orientation2}
\end{figure}

\begin{figure}
\centering
 \includegraphics[width=7in]{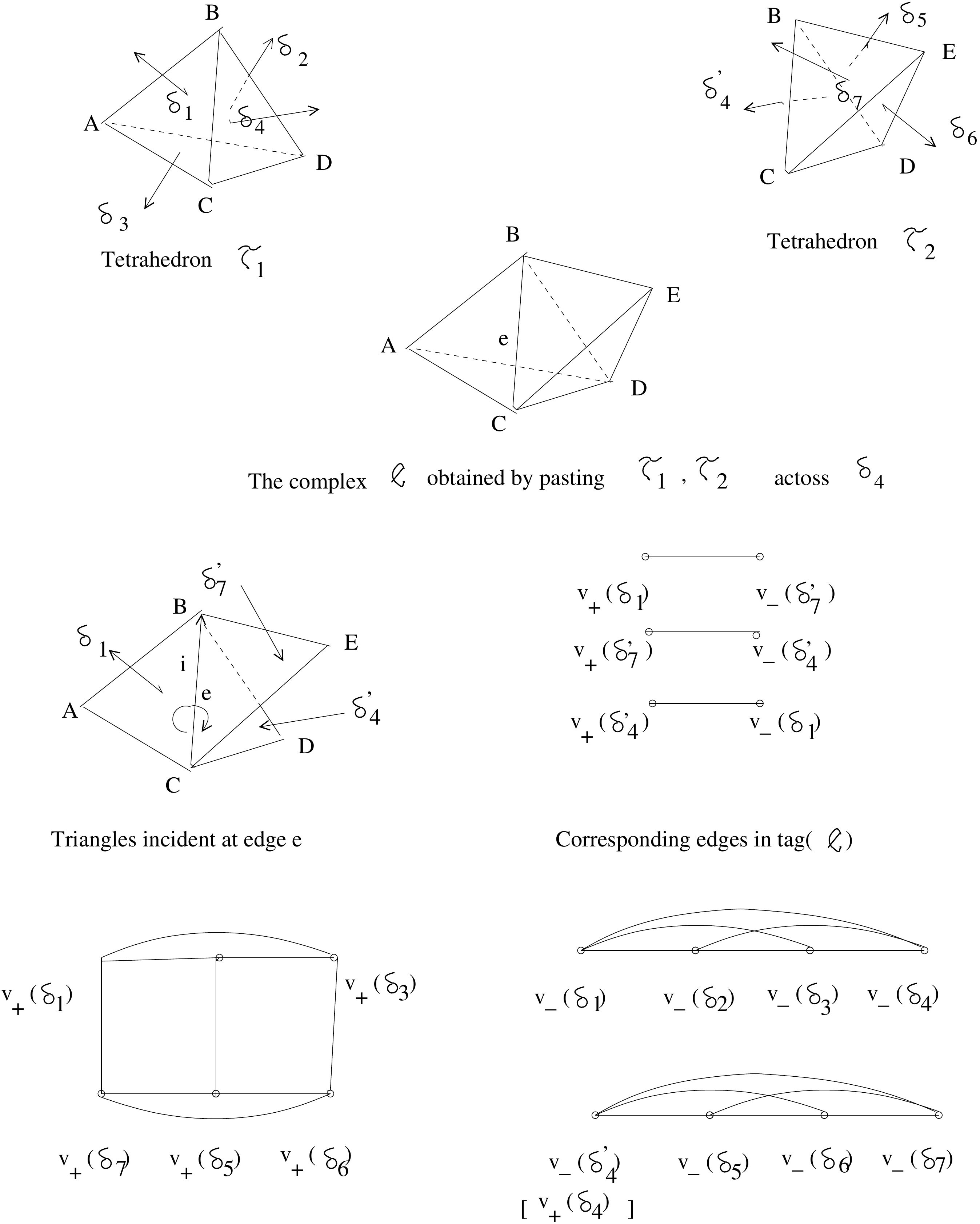}
 \caption{The graph $tag($\C$)$  for a complex $\C$}
 \label{fig:tag}
\end{figure}
The triangle adjacency graph $tag(\C)$ of $\C$ is constructed from the embedding information
about $\C$ in  $\mathbb{R}^3$ as to how triangles lie around an edge they are incident on.
We show later, that for the purpose of writing minimal equations for the network, 
we only have to find the connected components of $tag(\C).$ 

Let $\C$ be a $2-$complex embedded in $\mathbb{R}^3.$

We assign each oriented triangle $\delta:= abc,$ a `positive' node $v_+(\delta)$ and a `negative'
node  $v_-(\delta).$
Physically, $v_+(\delta)$  is placed at a distance $\epsilon$  from the centroid of the triangle moving in the  direction of $ab \times bc,$ (i.e.,
normal to the triangle consistent with the orientation of the triangle) and $v_-(\delta)$  is placed at a distance $\epsilon$  from the centroid of the triangle 
in the opposite direction (see Figure \ref{fig:triangle_orientation2}). The value of $\epsilon$ is sufficiently small so that
the line segment  $(v_+(\delta),v_-(\delta))$
does not intersect any other triangle of $\C.$
Note that if $\delta$ is given the opposite orientation and denoted $\delta ',$ then
$v_+(\delta')=v_-(\delta)$ and $v_-(\delta')=v_+(\delta).$
\begin{definition}
To build $tag(\C),$ we  start with the vertex set $$V(tag(\C)):= \{v_+(\delta_i),\delta_i\ \mbox{a triangle in}\ \C\}\bigcup \{v_-(\delta_i),\delta_i\ \mbox{a triangle in}\ \C\}.$$
Suppose triangles $\delta_1,\cdots ,\delta_k$
are incident at edge $e$ and are oriented to agree with the orientation of $e.$ Further, suppose when we rotate around $e$ in a direction consistent with the 
orientation of $e,$ we encounter them successively as $\delta_1,\delta _2,\cdots ,\delta_k, \delta _1$
(see Figure \ref{fig:trianglesedge1} for the case $k=3$).
%, and the direction of rotation 
%$\delta_3,\delta_2, \delta_1, \delta_3$). 
Then in $tag(\C),$ we construct the undirected edges 
$$(v_+(\delta_1),v_-(\delta_2)),(v_+(\delta_2),v_-(\delta_3)), \cdots ,(v_+(\delta_{k-1}),v_-(\delta_k)),(v_+(\delta_k),v_-(\delta_1)).$$ 
This is done for every edge $e$ of $\C$ (see Figure \ref{fig:tag}). The resulting set of edges is the edge set of
$tag(\C).$
\end{definition}
\begin{remark}
We note that in the above definition, `$v_+(\delta_i),v_-(\delta_i)$'
are used to denote both  points in $\mathbb{R}^3$ and the corresponding vertices
in $tag(\C).$ It would be clear from the context which entity is being referred to.
\end{remark}
In Figure \ref{fig:tag}, we have constructed the graph $tag(\C)$ 
%relevant to edge $e_1,$ 
for a complex $\C$ which is obtained by pasting  two tetrahedra $\tau_1,\tau_2$ across the face $\delta_4.$
Thus the complex has $3-$cells $\tau_1,\tau_2, $  $2-$cells $\delta_1, \delta_2,\delta_3,\delta_4,\delta_5,\delta_6,\delta_7$  and $1-$cells and $0-$cells, the edges and vertices of these triangles. 
The tetrahedra $\tau_1,\tau_2 $ are regularly oriented 
(i.e., so that in the boundary the triangles appear with 
an orientation consistent with the outward normal as in Figure \ref{fig:regular}).
The triangle $\delta_4$ in tetrahedron $\tau_1$ appears as $\delta'_4$ with opposite orientation 
in tetrahedron $\tau_2.$ 
Observe that this graph has three connected components corresponding to
the `outer surface' of the complex composed of the triangles
$\delta _1,\delta _2,\delta _3,\delta _5,\delta _6,\delta _7$
and the `inner surfaces' of the tetrahedra $\tau_1,\tau_2$
composed respectively of the triangles $\delta _1,\delta _2,\delta _3,\delta _4$ and $\delta_4',\delta _5,\delta _6,\delta _7.$ 
Our regular orientation of the tetrahedra has led to these surfaces 
being represented by $v_+$ vertices for the outer surface 
and $v_-$ vertices for the inner surfaces (noting that $\delta_4=- \delta_4'$).

To build the graph $\G_{comptag(\C)}$, we first find the connected components of
the graph $tag(\C).$ Let $V_1, \cdots , V_k$ be the vertex sets of these
components. We take the vertex set $V(\G_{comptag(\C)})$ to be the 
set $\{V_1, \cdots , V_k\}.$ Let $\delta $ be a triangle of $\C$
and let $v_+(\delta)\in V_i, v_-(\delta)\in V_j.$  Then in $\G_{comptag(\C)}$
we introduce a directed edge $e(\delta)$ from $V_i$ to $V_j.$ 
(If $V_i=V_j$ for $\delta$ then $e(\delta)$
would be a self loop.)
This is done for each triangle $\delta.$ 
Figure \ref{fig:gtagc} shows the graph $\G_{comptag(\C)}$ for the complex 
in Figure \ref{fig:tag}. This has been constructed by building
the three supernodes 
$$V_1:= \{v_+(\delta_1),v_+(\delta_2),v_+(\delta_3),v_+(\delta_5),v_+(\delta_6),v_+(\delta_7)\},
V_2:= \{v_-(\delta_1),v_-(\delta_2),v_-(\delta_3), v_-(\delta_4)\},$$
$$V_3:= \{v_-(\delta_4'),v_-(\delta_5),v_-(\delta_6),v_-(\delta_7)\}\equiv \{v_+(\delta_4),v_-(\delta_5),v_-(\delta_6),v_-(\delta_7)\}$$
and introducing an edge for each $\delta$ from the super node in which its
$v_+$ vertex lies to the supernode in which its $v_-$ vertex lies.

%A connected region  is a set of points in $\mathbb{R}^3$ with the
%property that given any two points in the set, it contains a continuous path between them.
It is clear that when $\C$ is removed from $\T,$ $\T\setminus\C$
could split into connected regions
 (see beginning of Section \ref{sec:2complex} for a description of $\C,\T, \C_{\T}$ etc.).
We now have the following intuitively obvious result.
\begin{lemma}
\label{lem:tagpathregionpath}
If $v_{\pm}(\delta_1)$ and $v_{\pm}(\delta_2)$ are path connected in
$tag(\C),$ then they are also path connected in $\T\setminus\C.$
\end{lemma}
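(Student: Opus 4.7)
The plan is to reduce the lemma to the single-edge case and then concatenate. It clearly suffices to show that every edge of $tag(\C)$ has its two endpoints in the same path component of $\T\setminus\C$: a $tag(\C)$-path from $v_{\pm}(\delta_1)$ to $v_{\pm}(\delta_2)$ then induces a path in $\T\setminus\C$ by juxtaposing the corresponding arcs, and this yields the lemma.

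For a single edge $(v_+(\delta_i),v_-(\delta_{i+1}))$ of $tag(\C)$, by construction the triangles $\delta_i,\delta_{i+1}$ are incident at a common edge $e$ of $\C$, oriented to agree with $e$, and consecutive in the cyclic order around $e$ induced by the orientation of $e$. Using the right-hand-rule convention built into the definitions of $v_+$ and $v_-$, I would verify that $v_+(\delta_i)$ sits just off $\delta_i$ on the side one enters when rotating further around $e$ toward $\delta_{i+1}$, and dually $v_-(\delta_{i+1})$ sits just off $\delta_{i+1}$ on the side one enters when rotating back toward $\delta_i$. Consequently both points lie in the open wedge $W$ bounded by $\delta_i$ and $\delta_{i+1}$ with axis $e$, at distance $\epsilon$ from $e$.

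Next, I would show that for a sufficiently small tubular neighborhood $U$ of $e$, the set $W\cap U$ is disjoint from $\C$. By the definition of ``consecutive'' in the cyclic list of triangles around $e$, no other triangle of $\C$ incident at $e$ enters $W$; any triangle of $\C$ not incident at $e$ lies a positive distance from $e$ (there are only finitely many of them and $\C$ is closed), so a small enough $U$ misses all such triangles. Inside $W\cap U$ I can then draw a piecewise-smooth path from $v_+(\delta_i)$ to $v_-(\delta_{i+1})$, consisting of a short radial segment inward toward $e$, a short circular arc about $e$ sweeping through the wedge, and a short radial segment back out; by shrinking $\epsilon$ if necessary I may assume both endpoints themselves lie in $W\cap U\setminus\C$. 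Concatenating such arcs along the given $tag(\C)$-path produces the desired path in $\T\setminus\C$.

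The main obstacle I anticipate is pinning down the orientation bookkeeping carefully enough to certify that $v_+(\delta_i)$ and $v_-(\delta_{i+1})$ really lie in the \emph{same} open wedge $W$ (and not in the complementary one): this is exactly what forces consecutive triangles around $e$ to connect a ``$+$'' vertex of one triangle to a ``$-$'' vertex of the next, and it is the only place the orientation hypothesis is used. Once this geometric fact is in hand, the tubular-neighborhood construction and the concatenation are elementary and use only that $\C$ is a finite union of closed triangles.
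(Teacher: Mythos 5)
Your overall strategy is the same as the paper's: reduce to a single edge of $tag(\C)$, i.e.\ to two triangles $\delta_i,\delta_{i+1}$ consecutive in the rotation about a shared edge $e$, check the orientation bookkeeping placing $v_+(\delta_i)$ and $v_-(\delta_{i+1})$ in the dihedral wedge between them, and join them by an arc swinging around $e$ inside that wedge. The orientation analysis you single out as the crux is indeed correct and is exactly what the construction of $tag(\C)$ encodes.

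There is, however, a concrete slip in your implementation. You assert that $v_+(\delta_i)$ and $v_-(\delta_{i+1})$ lie ``at distance $\epsilon$ from $e$'' and that by shrinking $\epsilon$ you may assume both endpoints lie in $W\cap U$. That is not so: by definition $v_\pm(\delta)$ sit at distance $\epsilon$ from the \emph{centroid} of $\delta$ along its normal, and the centroid may be far from $e$; shrinking $\epsilon$ moves these points toward the centroid, not toward $e$. So your tubular neighborhood $U$ of $e$ (which must in any case be a neighborhood of an interior sub-segment of $e$, since triangles of $\C$ meeting $e$ only in a vertex intersect every neighborhood of the closed edge) does not contain the endpoints, and you still owe an argument that the two ``access'' segments --- from $v_+(\delta_i)$, skimming just above the face of $\delta_i$, down into the wedge near $e$, and symmetrically on the other side --- avoid $\C$. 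This is fixable (slide at height $\epsilon$ over the relative interior of $\delta_i$, staying at positive distance from $\partial\delta_i$ until near $e$, and take $\epsilon$ small relative to the dihedral angles and to the distance from $\delta_i$ to the triangles disjoint from it), but as written the step is asserted rather than proved. The paper's proof dispatches exactly this point by routing the path through interiors of tetrahedra of the ambient triangulation $\C_{\T}$: $v_+(\delta_1)$ and a point $x_1$ near $e$ both lie in the interior of a single convex tetrahedron $\tau_1$ whose interior is disjoint from $\C$, so the straight segment between them is automatically admissible, and the arc around $e$ then passes only through tetrahedra sharing $e$ and crosses only faces not in $\C$. If you want to avoid invoking $\C_{\T}$, you should supply the $\epsilon$--$\delta$ argument for the access segments explicitly.
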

\begin{proof}
It is sufficient to consider the case where the path length in $tag(\C)$ is of length $1,$  i.e., when 
$\delta_1,\delta_2$ share a common edge $e$ in $\C.$
We may assume that $e$ appears with a positive sign in the boundary of both 
$\delta_1$ and $\delta_2.$
%there is an edge between 
%$v_{+}(\delta_1)$ and $v_{-}(\delta_2)$ in $tag(\C).$
This means in $\mathbb{R}^3,$ as we rotate about $e$ consistent with its orientation,
we encounter $\delta_2$ immediately after $\delta_1$ (see Figure \ref{fig:trianglesedge1}).
In $\C_{\T},$ there would be tetrahedra  $\tau_1, \tau_2$ such that
 $v_{+}(\delta_1)$  belongs to tetrahedron $\tau_1$ and $v_{-}(\delta_2)$  belongs to tetrahedron $\tau_2.$
In $\C_{\T},$ there would be tetrahedra $\tau_1', \cdots , \tau_k'$
all having $e$ as an edge,
such that $\tau_1,\tau_1'$ share a common triangle,  $ \tau_i',\tau_{i+1}', i= 1, \cdots k-1,$
share a common triangle,  $\tau_k',\tau_2$
share a common triangle with none of these common triangles  belonging to $\C.$
Draw an arc of radius $\epsilon$ from a point $x_1$ in $\tau_1$ to a point 
$x_2$ in $\tau_2.$ 
Choose $\epsilon$ to be sufficiently small so that it lies entirely in the union
of the tetrahedra $\tau_1,\tau_1', \cdots , \tau_k',\tau_2.$
The path $v_{+}(\delta_1),x_1$ lies in $\tau_1,$ the path $x_2,v_{-}(\delta_2)$
lies in $\tau_2,$ and the $\epsilon -$arc from $x_1$ to $x_2$ lies in the union
of the tetrahedra $\tau_1,\tau_1', \cdots , \tau_k',\tau_2.$
This path will continue to exist even if $\C$ is deleted from $\T.$
Observe that this argument is valid in the simpler case where $\tau_1,\tau_2$ share a common triangle.
The result follows.   
\end{proof}
We will call the algorithm for finding the  connected component of $tag(\C),$
the `sliding algorithm', because if we translate the traversal of nodes of $tag(\C)$
to a `physical' traversal of the triangles of $\C,$ it appears as though we are sliding on the surface
of triangles of $\C.$
  
It is clear that the construction of $tag(\C)$ from $\C$ is linear time on the size of $\C.$
Since finding connected component of a graph is linear time on the size of the graph,
it is clear that the construction of  $\G_{comptag(\C)}$
from $\C$ is also linear time on the size of $\C.$

\begin{figure}
\centering
 \includegraphics[width=7in]{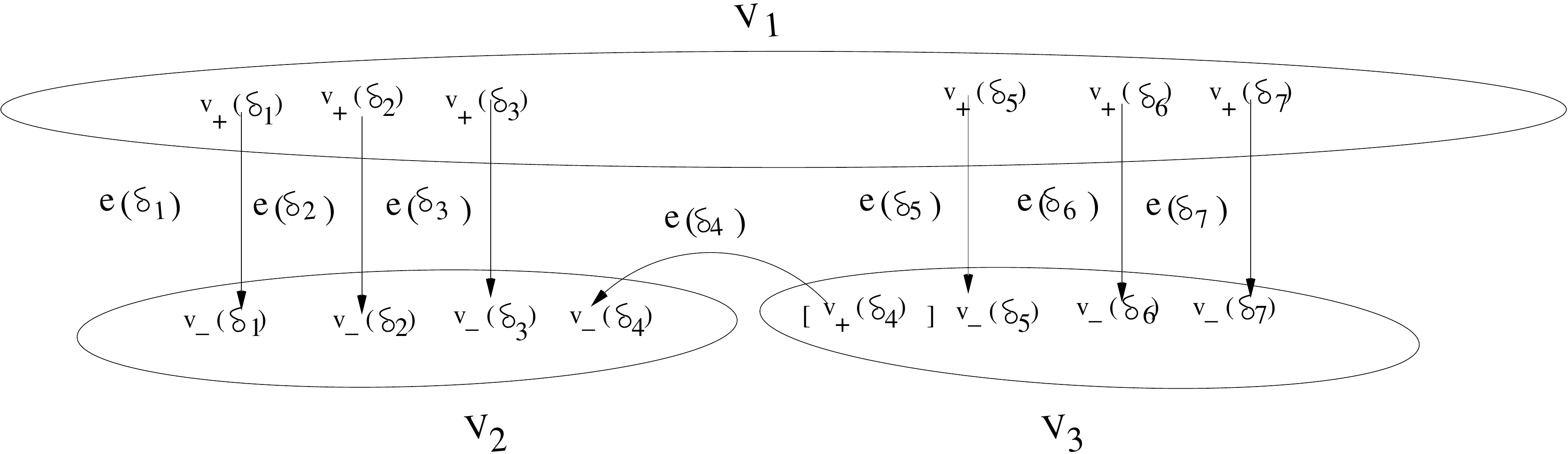}
 \caption{The graph $\G_{comptag(\C)}$  for the complex $\C$}
 \label{fig:gtagc}
\end{figure}

\begin{definition}
Let $\G$ be a directed graph. The incidence matrix $A(\G)$ of $\G$ is defined as follows.
$A(\G)$ has rows corresponding to nodes of $\G$ and columns corresponding
to edges of $\G.$
The entry $A(\G)_{i,j}$ is zero if edge $j$ is not incident on node $i,$
is $+1$ if the edge $j$ is incident on node $i$ but directed away
and is $-1$ if the edge $j$ is incident on node $i$ but directed inward.
If an edge has only one end point, then the corresponding column is a zero column.

\end{definition}
We have the following well known lemma whose proof is routine (see \cite{book},
for instance).
\begin{lemma}
\label{lem:incidencerank}
Let $\G$ be a directed graph. Then
\begin{enumerate}
\item The sum of the rows of the incidence matrix $A(\G)$ of $\G$ is zero;
\item If $\G$ is connected, then the matrix obtained by deleting any row of
$A(\G)$ has linearly independent rows.
\end{enumerate}
\end{lemma}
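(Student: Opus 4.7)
The plan is to verify each part directly from the definition of the incidence matrix; this is standard, and the only mild nuisance is checking that the argument is not disrupted by self-loops or edges with a single endpoint.

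For part (1), I would argue column-by-column. By the definition of $A(\G)$, an ordinary directed edge from $u$ to $v$ with $u\neq v$ contributes $+1$ in row $u$, $-1$ in row $v$, and $0$ elsewhere, so its column sums to $0$. A self-loop at $u$ is incident on $u$ alone and is both ``directed away'' from and ``directed into'' $u$; under the convention that such a column is zero (and in any case an edge with only one endpoint is explicitly given a zero column), the column again sums to $0$. Since every column of $A(\G)$ sums to $0$, the row vector $\mathbf{1}^T A(\G)$ is the zero vector, which is exactly the assertion that the sum of the rows of $A(\G)$ is zero.

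For part (2), I would show the stronger statement that the left null space of $A(\G)$ is one-dimensional, spanned by $\mathbf{1}$; the claim about deleting any single row then follows immediately, because any linear dependence among the remaining $n-1$ rows would extend (by inserting a $0$ coefficient for the deleted row) to a left nullvector of $A(\G)$ that is not a scalar multiple of $\mathbf{1}$, a contradiction. To establish the one-dimensionality, suppose $\lambda^T A(\G) = \mathbf{0}$, where $\lambda$ is indexed by the nodes of $\G$. For any edge of $\G$ with two distinct endpoints $u$ and $v$, evaluating $\lambda^T A(\G)$ on the corresponding column yields $\lambda_u - \lambda_v = 0$, i.e.\ $\lambda_u = \lambda_v$. (Self-loops and single-endpoint edges contribute the trivial equation $0=0$ and impose no constraint.)

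Now I would invoke connectedness of $\G$ (Definition \ref{def:connect} specialized to graphs, i.e.\ $1$-complexes): for any two nodes $u_0$ and $u_k$, there is a sequence $u_0, u_1, \dots, u_k$ with each consecutive pair $u_i, u_{i+1}$ joined by an edge (the orientation is irrelevant, since the equation $\lambda_{u_i} = \lambda_{u_{i+1}}$ depends only on the two endpoints). Chaining these equalities gives $\lambda_{u_0} = \lambda_{u_k}$, so $\lambda$ is constant on $V(\G)$, i.e.\ $\lambda$ is a scalar multiple of $\mathbf{1}$. This proves the left null space of $A(\G)$ is spanned by $\mathbf{1}$, completing the proof. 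The only ``obstacle'' is purely bookkeeping around degenerate edges (self-loops and dangling edges), which affect neither the row-sum computation in part (1) nor the edge-wise deduction $\lambda_u = \lambda_v$ in part (2).
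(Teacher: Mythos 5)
Your proof is correct and is precisely the standard argument that the paper omits, stating only that the proof is routine (citing \cite{book}): part (1) by summing each column, and part (2) by showing the left null space of $A(\G)$ is spanned by the all-ones vector via connectedness. The bookkeeping for self-loops and single-endpoint edges is handled consistently with the paper's convention that such columns are zero, so there is no gap.
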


The incidence matrix $A(\G_{comptag(\C)})$  
has rows corresponding to nodes of $\G_{comptag(\C)},$ i.e.,
to vertex sets of connected components of  $tag(\C),$ and columns corresponding 
to edges of $\G_{comptag(\C)},$ i.e.,
triangles of $\C.$
Let us denote by $V_i$ both a vertex of $\G_{comptag(\C)}$
as well as the corresponding vertex set of the connected component of $tag(\C).$ 
\\

Thus the entry $A(\G_{comptag(\C)})_{V_i,\delta _j}$ 
is zero if neither $v_+(\delta_j)$ nor $ v_-(\delta_j)$ belongs to $V_i$ in $tag(\C),$
is zero if both of  $v_+(\delta_j), v_-(\delta_j)$ belong to $V_i$ in $tag(\C),$
is $+1$ if  $v_+(\delta_j)$  belongs to $V_i$ in $tag(\C),$
is $-1$ if $v_-(\delta_j)$  belongs to $V_i$ in $tag(\C).$
\\

The incidence matrix $A(\G_{comptag(\C)})$  of the graph $\G_{comptag(\C)}$
in Figure \ref{fig:gtagc},
constructed from the triangle adjacency graph $tag(\C),$
associated with the complex $\C$ in Figure \ref{fig:tag}
is given below.

%A(\G_{tag(\C)}) & = &
$$
\begin{matrix}
\ \   &\delta_1&\delta_2&\delta_3&\delta_4&\delta_5&\delta_6&\delta_7\\  
V_1   &+1      &+1      &+1      &0       &+1      &+1      &+1\\
V_2&          -1&-1&-1&-1&0&0&0\\
V_3&          0&0&0&+1&-1&-1&-1
\end{matrix}
.$$
We now have the following simple lemma
\begin{lemma}
\label{lem:incidence}
The rows of $A(\G_{comptag(\C)})$ are $2-$cycles of $\C.$
\end{lemma}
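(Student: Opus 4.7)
The plan is to fix an arbitrary connected component $V_i$ of $tag(\C)$ and show directly that the associated row of $A(\G_{comptag(\C)})$, viewed as a vector $x_{V_i}$ on the triangles of $\C$, satisfies $A^{(1)}(\C)\,x_{V_i}=0$, which is precisely the defining property of a $2$-cycle of $\C$. To do this, I will fix an arbitrary edge $e$ of $\C$ and compute the $e$-th coordinate of $A^{(1)}(\C)\,x_{V_i}$ as a sum over the triangles $\delta_1,\ldots,\delta_k$ incident at $e$, then show that this sum vanishes by a telescoping argument driven by the edges of $tag(\C)$ at $e$.

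First I would reduce to the case where all triangles incident at $e$ are oriented to agree with $e$. Given the actual orientations, let $\epsilon_j\in\{+1,-1\}$ be the entry $[A^{(1)}(\C)]_{e,\delta_j}$, and let $\delta_j':=\epsilon_j\delta_j$ be the reorientation that agrees with $e$. Since reversing orientation swaps the roles of $v_+$ and $v_-$, the product
\[
[A^{(1)}(\C)]_{e,\delta_j}\cdot [A(\G_{comptag(\C)})]_{V_i,\delta_j}
= \epsilon_j\bigl(\mathbf{1}[v_+(\delta_j)\in V_i]-\mathbf{1}[v_-(\delta_j)\in V_i]\bigr)
\]
simplifies, by case analysis on $\epsilon_j$, to
\[
\mathbf{1}[v_+(\delta_j')\in V_i]-\mathbf{1}[v_-(\delta_j')\in V_i].
\]
So the $e$-th coordinate equals $\sum_{j=1}^k\bigl(a_j-b_j\bigr)$, where $a_j:=\mathbf{1}[v_+(\delta_j')\in V_i]$ and $b_j:=\mathbf{1}[v_-(\delta_j')\in V_i]$.

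Next I would use the construction of $tag(\C)$ at the edge $e$. By definition, the reoriented triangles $\delta_1',\ldots,\delta_k'$, listed in rotational order around $e$, contribute the edges $(v_+(\delta_j'),v_-(\delta_{j+1}'))$ for $j=1,\ldots,k$ (indices mod $k$) to $tag(\C)$. Since adjacent vertices of $tag(\C)$ lie in a common connected component, we have $v_+(\delta_j')\in V_i$ if and only if $v_-(\delta_{j+1}')\in V_i$, i.e.\ $a_j=b_{j+1}$ cyclically. Hence $\sum_j a_j=\sum_j b_{j+1}=\sum_j b_j$, and the $e$-th coordinate of $A^{(1)}(\C)\,x_{V_i}$ is zero. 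Since $e$ was arbitrary, this completes the argument.

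The routine but slightly fiddly part is the bookkeeping of orientations in the first step (turning the coefficient at $\delta_j$ into a clean ``$a_j-b_j$'' using the reoriented $\delta_j'$). The combinatorial heart is entirely in the second step, where the rotational edges of $tag(\C)$ at $e$ produce the cyclic identification $a_j=b_{j+1}$ that makes the sum telescope. I do not anticipate a serious obstacle beyond careful sign tracking, and degenerate situations such as $v_+(\delta_j)$ and $v_-(\delta_j)$ lying in the same component (yielding a self-loop in $\G_{comptag(\C)}$, whose column in $A(\G_{comptag(\C)})$ is zero) are handled automatically by the convention on self-loops.
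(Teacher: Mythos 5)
Your proof is correct and follows essentially the same route as the paper's: orient the triangles at $e$ to agree with $e$, use the $tag(\C)$ edges $(v_+(\delta_j'),v_-(\delta_{j+1}'))$ to identify the components containing $v_+(\delta_j')$ and $v_-(\delta_{j+1}')$, and conclude that the row of $A(\G_{comptag(\C)})$ is orthogonal to the row $e$ of the $2$-coboundary matrix; your cyclic telescoping $a_j=b_{j+1}$ is simply a cleaner, more explicit rendering of the paper's ``equal number of $v_+$ and $v_-$ in each component'' count. One notational slip: the matrix with rows indexed by edges and columns by triangles is $A^{(2)}(\C)$ in the paper's notation, not $A^{(1)}(\C)$.
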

\begin{proof}
We will show that the rows of $A(\G_{comptag(\C)})$ are orthogonal to
the rows of $A^{(2)}(\C),$ the $2-$ coboundary matrix of $\C.$
Consider the `$e$' row corresponding to edge $e$ of $\C.$
Let $\delta_1, \cdots , \delta_k$ be the triangles incident at $e.$
Let us, without loss of generality, assume that all these triangles 
have been oriented to agree with the orientation 
of $e.$
Thus the row $e$ in $A^{(2)}(\C)$ has only $0$ and $+1$ entries.
Let us assume further that $\delta_1, \cdots , \delta_k, \delta_1$ is the order
in which we encounter the triangles as we rotate around $e$
in $\mathbb{R}^3$ consistent with the orientation of $e.$
(See Figure \ref{fig:trianglesedge1} for the case $k=3$). 

Let us call the ordered pairs $(\delta _1, \delta _2), \cdots ,
(\delta _{i}, \delta _{i+1}), \cdots ,(\delta_k, \delta_1)$
ordered pairs of adjacent triangles at $e.$
For each ordered pair $(\delta _{i}, \delta _{i+1})$ we will have an edge in $tag(\C)$
between $v_+(\delta_i)$ and $v_-(\delta_{i+1}).$ 
%If ignore all the triangles $\delta$ incident at $e$ 
% such that in $tag(\C)$ 
%both $v_+(\delta)$ and $v_-(\delta)$ belong to the same component,
We may  ignore all the triangles $\delta$ incident at $e$ 
 such that in $tag(\C)$ 
both $v_+(\delta)$ and $v_-(\delta)$ belong to the same component,
since these contribute zero entries to the rows of $A(\G_{comptag(\C)}).$
After this,  we can see that of $\delta_1, \cdots , \delta_k$
we must have the same number of $v_+(\delta)$ and $v_-(\delta)$ belonging to a given component.
%After this we will still have the same number of $v_+(\delta)$ and $v_-(\delta)$ belonging to a given component.
%we see that an even number of the remaining triangles
%incident at $e$ in $\C$ belong to  a single component of $tag(\C)$
%and the $+1$ and $-1$ occur equally often.
Thus in the row of $A(\G_{comptag(\C)})$ corresponding to this 
component we have equal number of $+1$s and $-1$s in columns corresponding to triangles incident at $e$ in $\C.$ 
%So the common nonzero entries of this row with  the row $e$ in $A^{(2)}(\C)$ 
%will be even in number.
But the row $e$ in $A^{(2)}(\C)$ has only $+1$ entries corresponding to triangles incident at it.
Therefore,  the two vectors are orthogonal.
Thus every row of $A(\G_{comptag(\C)})$ is orthogonal
to every row of $A^{(2)}(\C),$ $i.e.,$ rows of $A(\G_{comptag(\C)})$ are $2-$cycles of $\C.$

%-----------------------------------------------
%
%Lee us delete from the list $\delta_1, \cdots , \delta_k$ 
%all those triangles $\delta$ such that in $tag(\C)$ 
%both $v_+(\delta)$ and $v_-(\delta)$ belong to the same component.
%These triangles will be zero columns in $A(\G_{tag(\C)}).$
%Let the order of the triangles around $e$ after this deletion be
%$\delta_{i_1}, \cdots , \delta _{i_r}.$
%The rule for construction of an edge in $\C$ is that $\delta_i,\delta_j$
%are incident at $e,$ agree in orientation with $e$ and occur successively
%as we move around $e$ then there will be an edge between $v_+(\delta_i)$ and $v_-(\delta_j)$ in $tag(\C).$ 
%Suppose we construct a  component of $tag(\C)$by  
%by traversing the nodes. Each edge in the component corresponds
%to a pair of 
%  
% so that the row has only $0$ and $1$ entries.
%-------------------------------------------------
\end{proof}
We show later that the rows of $A(\G_{comptag(\C)})$ actually generate the 
$2-$cycle space of $\C.$

\section{Cell dual $\G_{\T}$ of $\C_{\T}$ and the region graph $\G_{region(\C)}$ of $\C$}
\label{sec:celldual}
The cell dual of the $n-$cells of an $n-$complex is a simple concept  for studying the interrelationship
of the $n-$cells. In practice, this idea is often used to solve electromagnetic field problems,
for instance, in the case of magnetic circuits.
Our interest is in the cell dual of the $3-$complex $\C_{\T}$ embedded in $\mathbb{R}^3.$
Section \ref{sec:2complex} has a  description of $\C_{\T}.$ 
We remind the reader that the set $S$ of triangles of $\C$ is contained in the set $S^{(2)}(\C_{\T})$
of triangles of $\C_{\T}.$ Further 
the  triangles of $\C$ lie in the interior of the tetrahedron $\T$
so that 
none of these triangles intersect the subset 
of triangles of $S^{(2)}(\C_{\T})$ which lie in the boundary of $\T.$
The notions $v_+(\delta),v_-(\delta)$ are defined in the beginning of Section \ref{sec:triangle}.
We show in this section that the graph $\G_{region\C},$ defined below, which captures the interrelationship between 
connected regions of $\T\setminus\C,$ can be obtained from the cell dual of $\C_{\T}$ by  contracting
edges which correspond to triangles of $\C_{\T}$ which are not in $\C.$ 
\begin{definition}
Let the triangles in $S$ have the same orientation in $\C$ and $\C_{\T}.$
The cell dual $\G_{\T}$ of $\C_{\T}$ is constructed as follows. Put a node $v_{ext}$ in the region 
external to $\T.$ In the interior of each tetrahedron $\tau$ of $\C_{\T}$ put a node $v_{\tau }.$
For each triangle $\delta$ of $\C_{\T}$ which has $v_+(\delta) \in \tau_i$ and $v_-(\delta) \in \tau_j$
introduce a directed edge $e_{\delta}$ from $v_{\tau _i}$ to 
$v_{\tau _j}.$ If $\delta$ lies in the boundary of $\T$ and in the boundary of the tetrahedron $\tau,$ add an edge between $v_{ext}$ 
and $v_{\tau },$
with direction  from $v_+(\delta) $ to $v_-(\delta).$
\end{definition}
The following lemma is immediate  from the definition of the cell dual.
% and from the fact that $\T$ is path connected.

\begin{lemma}
\label{lem:connectcelldual}
$\G_{\T}$ is connected.
\end{lemma}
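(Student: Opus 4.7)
The plan is to show that any two nodes of $\G_{\T}$ lie in the same connected component. I would first treat two tetrahedral nodes $v_{\tau_1}, v_{\tau_2}$ by picking interior points $x_1 \in \tau_1$ and $x_2 \in \tau_2$ and forming the straight line segment $\ell$ joining them. Since $\T$ is convex and $\tau_1,\tau_2 \subseteq \T$, the segment $\ell$ lies entirely in $\T$. A standard general-position argument then applies: the $0$- and $1$-cells of $\C_{\T}$ form a finite union of points and line segments in $\mathbb{R}^3$, so by a small perturbation of $x_1$ and $x_2$ within the interiors of $\tau_1$ and $\tau_2$ one can arrange that $\ell$ avoids every vertex and edge of $\C_{\T}$ and meets each triangle of $\C_{\T}$ transversally at an interior point.

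With this genericity, traversing $\ell$ from $x_1$ to $x_2$ produces a finite sequence of tetrahedra $\tau_1 = \tau^{(0)}, \tau^{(1)}, \ldots, \tau^{(k)} = \tau_2$, in which consecutive members share the triangle crossed at the transition. Since every interior triangle of $\C_{\T}$ is a face of exactly two tetrahedra, each such shared triangle provides, by the definition of $\G_{\T}$, an edge between $v_{\tau^{(j)}}$ and $v_{\tau^{(j+1)}}$, yielding a walk from $v_{\tau_1}$ to $v_{\tau_2}$ in $\G_{\T}$. To incorporate $v_{ext}$, I would pick any triangle $\delta$ lying in the boundary $\partial \T$; such a $\delta$ exists because $\partial \T$ is covered by boundary triangles of the tetrahedra of $\C_{\T}$. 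By construction, $\delta$ is a face of exactly one tetrahedron $\tau$, and the definition of $\G_{\T}$ then supplies an edge between $v_{ext}$ and $v_{\tau}$, placing $v_{ext}$ in the same component as every tetrahedral node.

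The main technical point is the general-position perturbation, but this is a routine dimension count (a generic segment in $\mathbb{R}^3$ avoids a fixed finite union of points and line segments and meets any fixed finite collection of planes transversally away from their boundaries); once this is in hand, the rest amounts to unpacking the definition of $\G_{\T}$ together with the fact that interior triangles of $\C_{\T}$ sit between exactly two tetrahedra while boundary triangles of $\T$ sit on exactly one.
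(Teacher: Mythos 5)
Your proposal is correct. It is worth noting that the paper itself supplies no argument here: Lemma \ref{lem:connectcelldual} is simply declared ``immediate from the definition of the cell dual,'' so there is nothing to compare step by step. What you have done is fill in the missing content with the standard general-position argument: a generic segment between interior points of two tetrahedra of $\C_{\T}$ stays in $\T$ by convexity, misses the $0$- and $1$-skeleton, and crosses triangles only transversally at interior points, so the ordered list of tetrahedra it visits yields a walk in $\G_{\T}$; the node $v_{ext}$ is then attached through any triangle on $\partial\T$. Two small points you use implicitly and could make explicit: (i) since $x_1,x_2$ lie in the interior of the convex set $\T$, the whole segment does too, so every triangle it crosses is an interior triangle of $\C_{\T}$ and hence is shared by exactly two tetrahedra, which is what makes each transition an edge of $\G_{\T}$; and (ii) the intersection of the segment with each closed tetrahedron is a closed subinterval, which is what guarantees that the visited tetrahedra form a finite sequence with consecutive members meeting in the crossed triangle. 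Neither is a gap, just the bookkeeping that turns ``immediate'' into a proof. Your argument is exactly the kind of justification the lemma deserves, and it is the natural one given that a purely combinatorial reading of the definition of $\G_{\T}$ does not by itself yield connectivity.
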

%From Lemma \ref{lem:connectcelldual}, Lemma \ref{lem:incidencerank}
%and Theorem \ref{Hurewicz2}, 
We now have the following lemma.
\begin{lemma}
\label{lem:incidence2}
Let $A({\G_{\T}})$ be the incidence matrix of  $\G_{\T}.$ Let $A_{red}$ be the submatrix 
of $A({\G_{\T}})$ obtained by deleting the row corresponding to $v_{ext}.$
Then 
\begin{enumerate}
\item $A_{red}$ is the transpose of the $3-$coboundary matrix of $\C_{\T};$
\item in $A({\G_{\T}}),$ the row corresponding to $v_{ext}$ is the negative of the sum
of all the other rows;
\item the row space of  $A({\G_{\T}})$ = row space of $A_{red}$ = the $2-$cycle space of $\C_{\T}.$
\end{enumerate}
\end{lemma}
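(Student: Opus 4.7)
The plan is to verify the three parts in the stated order, with the bulk of the work in part 1, and then to combine part 1 with Theorem \ref{Hurewicz2} to extract the characterization of the $2$-cycle space in part 3.

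For part 1, I will fix a tetrahedron $\tau$ of $\C_{\T}$ (giving a row of $A_{red}$ indexed by $v_{\tau}$) and a triangle $\delta$ of $\C_{\T}$ (giving a column indexed by $e_{\delta}$), and compare the entry $A_{red}[v_{\tau},e_{\delta}]$ to the entry $A^{(3)}(\C_{\T})[\delta,\tau]$. The left entry, by the construction of $\G_{\T}$ and the definition of the incidence matrix, is $+1$ iff $v_+(\delta)\in\tau$, $-1$ iff $v_-(\delta)\in\tau$, and $0$ otherwise. The right entry, by Definition \ref{def:coboundary}, is the coefficient of $\delta$ (with its fixed orientation in $\C_{\T}$) in $\partial\tau$. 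Now $v_+(\delta)$ lies on the side of $\delta$ pointed to by the normal $ab\times bc$ associated with the orientation of $\delta$, while the regular orientation of $\tau$ demands that each face of $\partial\tau$ be oriented by the outward normal of $\tau$. Chasing these conventions through the three cases gives the desired entry-by-entry identification.

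For part 2, every edge $e_{\delta}$ of $\G_{\T}$ has two endpoints: one in the tetrahedron (or exterior region) containing $v_+(\delta)$, the other containing $v_-(\delta)$. Hence each column of $A(\G_{\T})$ has exactly one $+1$ and one $-1$, so each column sums to zero. Equivalently the sum of all rows of $A(\G_{\T})$ is the zero row, which yields part 2 after isolating the $v_{ext}$ row. Part 3 is then a short combination: by part 2 the row spaces of $A(\G_{\T})$ and $A_{red}$ coincide; by part 1 this row space equals the column space of $A^{(3)}(\C_{\T})$; and by Theorem \ref{Hurewicz2} the column space of $A^{(3)}(\C_{\T})$ is exactly the $2$-cycle space of $\C_{\T}$.

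The main obstacle is the sign bookkeeping in part 1, since one must simultaneously track the fixed orientation of $\delta$ as a cell of $\C_{\T}$, the side on which $v_+(\delta)$ lies, the regular orientation of $\tau$, and the direction (from the $v_+$-side to the $v_-$-side) of the edge $e_{\delta}$ in $\G_{\T}$. The key observation that makes these mutually compatible is that under the regular orientation, $\delta$ occurs in $\partial\tau$ with coefficient equal to the sign of the alignment between its given orientation and the outward normal of $\tau$ at $\delta$, which is precisely the same signed data as whether $v_+(\delta)$ or $v_-(\delta)$ lies in $\tau$. Once this is pinned down, parts 2 and 3 require only routine observations.
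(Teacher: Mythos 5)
Your proposal follows essentially the same route as the paper, which simply declares parts 1 and 2 immediate from the definitions and obtains part 3 from the connectivity and rank facts about incidence matrices together with Theorem \ref{Hurewicz2}; your version merely makes the definitional check explicit, and your part 3 is if anything slightly leaner since it needs only parts 1--2 and Theorem \ref{Hurewicz2}. One caution on the sign bookkeeping you defer in part 1: with the stated conventions --- $v_+(\delta)$ on the side pointed to by the orientation normal of $\delta$, the regular orientation of $\tau$ putting each facet into $\partial\tau$ with the outward-normal orientation, $e_{\delta}$ directed from the $v_+$-side to the $v_-$-side, and incidence entry $+1$ for an edge directed away from a node --- the triangle $\delta$ occurs with sign $+1$ in $\partial\tau$ exactly when $v_+(\delta)$ lies \emph{outside} $\tau$, whereas $A_{red}[v_{\tau},e_{\delta}]=+1$ exactly when $v_+(\delta)$ lies \emph{inside} $\tau$. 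So the honest chase gives $A_{red}=-\bigl(A^{(3)}(\C_{\T})\bigr)^T$ rather than the transpose itself; the signed data are opposite, not ``precisely the same'' as your key observation asserts. This global sign is harmless for parts 2 and 3 (row spaces and hence the $2$-cycle space are unaffected) and is arguably a slip in the statement itself, but it should be acknowledged rather than absorbed into an unchecked claim that the conventions line up.
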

\begin{proof}
Parts (1) and (2) are immediate from the definitions of $\G_{\T}, A({\G_{\T}}), 
A_{red} $ and $3-$coboundary matrices.

Part (3) follows from Lemma \ref{lem:connectcelldual}, Lemma \ref{lem:incidencerank}
and Theorem \ref{Hurewicz2}.

\end{proof}

\begin{definition}
The region graph $\G_{region(\C)}$ of $\C$ has nodes corresponding to connected regions 
$\Reg_1, \cdots, \Reg_k$ of $\T \setminus\C$ and edges corresponding to triangles of $\C.$
It is built as follows.  
Put a node $v_i$ in the region $\Reg_i$ for  $i=1, \cdots ,k.$
For each triangle $\delta $ of $\C$ introduce a directed edge $e_{\delta}$ from $v_i$ to $v_j$
if $v_+(\delta) \in \Reg_i$ and $v_-(\delta) \in \Reg_j.$
\end{definition}
\begin{definition}
\label{def:graphcrossdot}
Let $\G$ be a graph on vertex set $V$ and edge set $E.$ Let $T\subseteq E.$
The graph $\G\circ T,$ called restriction of $\G$ to $ T,$ is obtained from $\G$ by deleting edges in $E\setminus T$ and also 
deleting any isolated nodes formed in the process.\\ 
 %We
 %say, in this case, that the edges of $E-T$ are deleted.
The graph $\G\times T,$ called contraction of $\G$ to $ T,$ is obtained
by first fusing the end vertices of edges in  $E\setminus T$ and then deleting them. We
 say, in this case, that the edges of $E\setminus T$ are contracted.\\
If $T_2\subseteq T,$ we denote $(\G\times T)\times T_2, (\G\times T)\circ T_2,(\G\circ T)\times T_2,(\G\circ T)\circ T_2,$ respectively by\\ $\G\times T\times T_2, \G\times T\circ T_2,\G\circ T\times T_2,\G\circ T\circ T_2.$\\
A graph of the form $\G\circ T\times T_2$ or of the form $\G\times T\circ T_2$ is called a minor of $\G.$
\end{definition}
\begin{definition}
Let $\G$ be a graph on vertex set $V$ and edge set $E.$
$\V(\G)$ denotes the $1-$coboundary space on $E,$ i.e., the row space of the incidence
 matrix of $\G.$ 
\end{definition}

%\begin{remark}
%We note that the operations `$\circ$', `$\times$' are on graphs whereas
%the same symbols were used earlier for operations on vector spaces in Definition \ref{def:minor}. However, the context would make clear as to which operation is intended.
%\end{remark}
%The following results are well known (\cite{tutte},\cite{book}).
In the literature,  the notation  `$\circ$', `$\times$' are used for operations on graphs as above, 
for operations on vector spaces as in Definition \ref{def:minor} and for operations on matroids 
as later in Definition \ref{def:matroiddotcross}.
This is partly because the operations are related.
The context would make clear as to which operation is intended.
This is so in the following results which are  well known (\cite{tutte},\cite{book}).
(For better readability set difference is denoted $A-B$ rather than $A\setminus B.$)
\begin{theorem}
Let $\G$ be a graph on vertex set $V$ and edge set $E.$ Let $T_2\subseteq T.$ 
We have
\begin{enumerate}
\item $\G\times T\circ T= \G\times T,$
\item $\G\circ T\times T= \G\circ T,$
\item $\G\times T\times T_2= \G\times T_2,$ 
\item $\G\times T\circ T_2= \G\circ (E-( T- T_2))\times T_2,$
\item $\G\circ T\circ T_2= \G\circ T_2.$
\end{enumerate}
\end{theorem}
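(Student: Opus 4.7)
The plan is to verify each of the five identities directly from Definition~\ref{def:graphcrossdot}, tracking (i) which edges of $E$ remain or are contracted at each stage, and (ii) how the vertex set evolves under the operations. Identities (1), (2), (3), and (5) follow by straightforward bookkeeping once the correct edge and vertex sets are identified; (4) is the only substantive item, and it amounts to commuting a contraction with a deletion applied to disjoint edge sets.

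For (1), the edge set of $\G \times T$ is already $T$, so $\circ T$ deletes no edge and creates no new isolated vertex. For (2), the edge set of $\G \circ T$ is already $T$, so $\times T$ contracts nothing. For (3), contracting $E - T$ and then $T - T_2$ identifies vertices according to the transitive closure of the endpoint relation of $E - T_2$; this transitive closure is insensitive to the order in which the pairs are added, so the resulting graph coincides with $\G \times T_2$. For (5), the two-stage deletion removes $E - T_2$ in total, and a vertex becomes isolated after the combined deletion if and only if it becomes isolated in a single pass of deleting $E - T_2$ from $\G$, so $\G \circ T \circ T_2 = \G \circ T_2$.

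The main case is (4). Write $C := E - T$ and $D := T - T_2$, so that $E - (T - T_2) = C \cup T_2$ and $C \cap D = \emptyset$. The LHS first contracts $C$ and then deletes $D$; the RHS first deletes $D$ and then contracts $C$. Since $C$ and $D$ are disjoint, the two edge-level operations commute, and in both cases the surviving edge set is $T_2$. I would then match the vertex sets by tracing $\sim_C$ equivalence classes of $V$: on each side, the claim is that an equivalence class $[v]$ appears as a vertex of the final graph if and only if some member of $[v]$ is incident to a $T_2$-edge of $\G$. The verification splits naturally into singleton classes (whose only vertex has no $C$-edge in $\G$) and multi-element classes (all of whose members must carry $C$-edges), and in each regime the endpoint book-keeping is direct.

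The main obstacle will be keeping the isolated-vertex deletion consistent across the two orders of operation in (4). Specifically, I must verify that multi-element classes all of whose members are incident only to $C$- and $D$-edges (no $T_2$-edges) are discarded on both sides -- on the LHS because they become isolated after $D$ is removed from $\G \times T$, and on the RHS because the contraction of $C$ fuses a set of vertices whose only external incidences lay in $D$ and have already been deleted -- and that classes with an incident $T_2$-edge are retained in both cases. Once this matching is carried out class by class, (4) follows and the theorem is complete.
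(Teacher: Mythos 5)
The paper gives no proof of this theorem at all: it is stated as a well-known fact with citations to Tutte and \cite{book}, so there is nothing to compare your argument against line by line. Your direct verification from Definition~\ref{def:graphcrossdot} is sound, and you correctly isolate the only delicate item, namely (4) and its isolated-vertex bookkeeping; items (1), (2), (3), (5) are indeed routine once one observes that fusion under contraction is governed by the connected components of the contracted edge set and that deletion is governed only by the total set of edges removed.

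One caveat on (4), which you should make explicit if you write this out in full. Your claim that a multi-element $\sim_C$-class with no incident $T_2$-edge is ``discarded on the RHS'' is not licensed by the letter of the paper's definition: $\G\times T$ is defined by fusing endpoints of $E\setminus T$ and deleting those \emph{edges}, with no provision for deleting a vertex that becomes isolated after the fusion, whereas $\G\circ T$ explicitly deletes isolated vertices formed in the process. Taken literally, the RHS of (4) can retain an isolated fused vertex that the LHS deletes. The identity therefore holds either up to isolated vertices (which is all that matters downstream, since the paper only ever uses the row space of the incidence matrix, where isolated vertices contribute zero rows) or under the common convention that contraction also discards isolated vertices. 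State whichever convention you adopt; with that fixed, your class-by-class matching completes the proof.
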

\begin{theorem}
\label{thm:graphcontrest}
Let $\G$ be a graph on vertex set $V$ and edge set $E $ and let $T\subseteq E.$ 
Let $\V(\G')$ denote the row space of $A(\G').$
%Let $\V(\G)$ denote the $1-$ coboundary space, i.e., the row space of the incidence
% matrix of $\G.$ Then we have 
\begin{enumerate}
\item $\V(\G\times T)= (\V(\G))\times T,$
\item $\V(\G\circ T)= (\V(\G))\circ T.$
\end{enumerate}
\end{theorem}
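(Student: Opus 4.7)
The plan is to handle part (2) first by a direct inspection of how $A(\G)$ changes under edge deletion, and then to treat part (1) by matching the rows of $A(\G\times T)$ with those linear combinations of rows of $A(\G)$ that vanish outside $T$.

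For part (2), I note that $A(\G\circ T)$ is obtained from $A(\G)$ by deleting the columns indexed by $E\setminus T$ and then dropping any rows that have become identically zero. These are precisely the rows indexed by the isolated vertices created by the deletion, which are themselves removed when forming $\G\circ T$. Since discarding zero rows does not change the row space, $\V(\G\circ T)$ equals the space of vectors $(\lambda^T A(\G))|_T$ as $\lambda$ varies, which is by definition $\V(\G)\circ T$.

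For part (1), I would argue the inclusion $\V(\G\times T) \subseteq \V(\G)\times T$ first. Fix a vertex $C$ of $\G\times T$; by the construction in Definition \ref{def:graphcrossdot}, $C$ corresponds to a connected component of the subgraph $(V,E\setminus T)$ of $\G$. The sum of the rows of $A(\G)$ indexed by the vertices of $C$ vanishes on every $e \in E\setminus T$: if $e$ has both endpoints inside $C$ the contributions $+1$ and $-1$ cancel, and otherwise both endpoints lie outside $C$ (since $C$ is a full connected component of $(V,E\setminus T)$), so no vertex of $C$ contributes. On each column $e \in T$, the same sum returns exactly the signed incidence between $e$ and $C$ in $\G\times T$, because edges of $T$ retain their orientation under contraction. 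Hence the $C$-row of $A(\G\times T)$ is the restriction to $T$ of a vector of $\V(\G)$ that already vanishes off $T$, giving the claimed inclusion. For the reverse inclusion, take $x \in \V(\G)\times T$, say $x = (\lambda^T A(\G))|_T$ with $\lambda^T A(\G)$ vanishing on $E\setminus T$. The vanishing condition forces $\lambda_u = \lambda_v$ whenever $uv \in E\setminus T$, so $\lambda$ is constant on each connected component of $(V,E\setminus T)$; writing $\lambda_v = \mu_{C(v)}$ and regrouping the sum by components, $x$ becomes a linear combination of the rows of $A(\G\times T)$, so $x \in \V(\G\times T)$.

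The only real conceptual step is recognising that the vertex set of $\G\times T$ is naturally indexed by the connected components of $(V,E\setminus T)$, and that summing rows of $A(\G)$ over such a component simultaneously produces the cancellation needed on the contracted edges and the correct signed incidence on the surviving edges. Once this identification is in place, the rest is orientation bookkeeping. I do not anticipate a genuine obstacle: the statement is essentially the linear-algebra shadow of the combinatorial definitions of contraction and restriction, and the two inclusions in part (1) mirror each other cleanly.
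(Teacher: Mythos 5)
Your proof is correct and complete. The paper itself offers no proof of this theorem --- it is stated as a well-known fact with citations to Tutte and \cite{book} --- so there is no argument to compare against; your write-up supplies exactly the standard reasoning the paper implicitly relies on. The key identification you single out (vertices of $\G\times T$ correspond to connected components of $(V,E\setminus T)$, and summing the corresponding rows of $A(\G)$ kills the columns of $E\setminus T$ while reproducing the incidences of $\G\times T$ on $T$) is indeed the whole content of part (1), and your observation that an edge of $E\setminus T$ cannot have exactly one endpoint in such a component is the one point where a careless argument could go wrong; the degenerate cases (self-loops and one-endpoint edges, whose columns are zero by the paper's convention, and isolated vertices, whose rows are zero) are all consistent with your argument.
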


We now have the following simple lemma.
\begin{lemma}
\label{lem:cycleregion}
Let $S$ denote the set of edges of $ \G_{\T}$ corresponding to triangles
in the complex $\C.$
We have
\begin{enumerate}
\item $\G_{region(\C)} = \G_{\T}\times S;$
\item $\G_{region(\C)}$  is connected.
\item $\V(\G_{region(\C)})=\V \times S$, where $\V$ is the $2-$cycle space of $\C_{\T}$ and hence $\V(\G_{region(\C)})$ is the $2-$cycle space of $\C.$
\end{enumerate}
\end{lemma}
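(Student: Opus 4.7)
The plan is to handle the three parts in order, using the first as the foundation for the other two.

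For part~(1), I will argue that the combinatorial structure of $\G_{\T}\times S$ coincides with the construction of $\G_{region(\C)}$. The vertex set of $\G_{\T}\times S$ is obtained by taking the equivalence classes of $V(\G_{\T})=\{v_{ext}\}\cup\{v_{\tau}:\tau\in S^{(3)}(\C_{\T})\}$ under the relation generated by the endpoints of edges not in $S$, i.e.\ edges corresponding to triangles of $\C_{\T}\setminus\C$. Two tetrahedra of $\C_{\T}$ are identified iff one can reach the other by crossing faces not in $\C$; since $\C$ lies in the interior of $\T$, every boundary face of $\T$ is in $\C_{\T}\setminus\C$, so $v_{ext}$ is identified with every tetrahedron adjacent to $\partial\T$ and then propagated. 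Thus each equivalence class is precisely the set of tetrahedra whose interiors union to a connected region of $\T\setminus\C$ (the equivalence class of $v_{ext}$ being the unique region that reaches $\partial\T$). The remaining edges of $\G_{\T}\times S$ are exactly those of $S$, i.e.\ one per triangle of $\C$, directed from the class containing $v_+(\delta)$ to the class containing $v_-(\delta)$. This is precisely the construction of $\G_{region(\C)}$, giving $\G_{region(\C)}=\G_{\T}\times S$.

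For part~(2), I will invoke Lemma~\ref{lem:connectcelldual}, which states that $\G_{\T}$ is connected. Contracting a subset of edges of a graph preserves connectedness, hence $\G_{region(\C)}=\G_{\T}\times S$ is connected as well.

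For part~(3), I will chain together three results already established. By Theorem~\ref{thm:graphcontrest}(1),
\[
\V(\G_{region(\C)})\;=\;\V(\G_{\T}\times S)\;=\;\V(\G_{\T})\times S.
\]
By Lemma~\ref{lem:incidence2}(3), the row space $\V(\G_{\T})$ of the incidence matrix of $\G_{\T}$ equals the $2$-cycle space $\V$ of $\C_{\T}$. Combining these, $\V(\G_{region(\C)})=\V\times S$. Finally, Lemma~\ref{cycles_cob_complex_rest}(2) identifies $\V\times S$ with the $2$-cycle space of $\C$, completing the claim.

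The main obstacle is the bookkeeping in part~(1), specifically verifying that the equivalence class of $v_{ext}$ in $\G_{\T}\times S$ corresponds naturally to a single region of $\T\setminus\C$ rather than to something extraneous. The key technical input is that $\C$ is strictly interior to $\T$, so the boundary triangles of $\T$ belong to $\C_{\T}\setminus\C$; thus $v_{ext}$ cannot be isolated after contraction, and it gets merged with exactly one region of $\T\setminus\C$. Once this is settled, the other two parts follow mechanically from the cited results.
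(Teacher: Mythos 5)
Your proof is correct and follows essentially the same route as the paper's: part (1) by observing that contracting the edges of $\G_{\T}$ not in $S$ fuses exactly the tetrahedra lying in a common connected region of $\T\setminus\C$, part (2) from Lemma \ref{lem:connectcelldual} plus preservation of connectedness under contraction, and part (3) by chaining Theorem \ref{thm:graphcontrest}, Lemma \ref{lem:incidence2}, and Lemma \ref{cycles_cob_complex_rest}. Your explicit remark that $v_{ext}$ gets absorbed into the class of the unique region meeting $\partial\T$ (because $\C$ is interior to $\T$) is a point the paper's proof leaves implicit, but it does not change the argument.
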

\begin{proof}

1.  Two tetrahedra $\tau_{0},\tau_{end}$ of $\C_{\T}$ belong to a connected region of
$\T\setminus\C,$ iff we can find a sequence $\tau_{0},\tau_1, \cdots , \tau_n,\tau_{end},$ 
such  that each tetrahedron, other than $\tau_{end},$  has a common triangle 
with the next tetrahedron, with the additional condition that this common triangle
does not belong to $S.$ In $\G_{\T},$ these correspond to nodes $v_{\tau_{0}},v_{\tau_1}, \cdots , v_{\tau_n},v_{\tau_{end}},$ with successive nodes being joined by an 
edge $e_{\delta}$ with $\delta \notin S.$ 
If we contract all the edges of $\G_{\T}$ corresponding to triangles of $\C_{\T}$ that lie in a connected region of $\T\setminus \C,$ this would result in all the nodes of
$\G_{\T}$ corresponding to  tetrahedra 
in the connected region  
being fused to a single node. When this is done repeatedly for all the connected
regions of $\T\setminus \C,$ we would be left with a graph which has one node $v_{\Reg _i}$
coresponding to each connected region $\Reg_i$ of $\T\setminus \C$ and edges between these
nodes which correspond to those triangles in $S$ which lie between the connected regions.
An edge $e_{\delta}$ would be directed from $v_{\Reg_i}$ to 
$v_{\Reg_j},$
if $v_+(\delta)$ lies in $\Reg_i $ and
if $v_-(\delta)$ lies in $\Reg_j .$ 
Since the operations we have performed are contractions of edges in $E(\G_{\T})- S,$
in the graph $\G_{\T},$ and this resulting graph is $\G_{region(\C)},$ the lemma follows.

2. From Lemma \ref{lem:connectcelldual} we know that $ \G_{\T}$ is connected.
Contraction of a connected graph results in a connected graph. The result follows.

3. By Lemma \ref{lem:incidence2}, the row space of $A(\G_{\T})$ is the $2-$cycle space of
$\C_{\T}.$
By Lemma \ref{cycles_cob_complex_rest}, if $\V$ is the $2-$cycle space of
$\C_{\T},$ then $\V\times S$ is the  $2-$cycle space of
$\C.$ By Theorem \ref{thm:graphcontrest}, if $\hat{\V} $ is the row space of the incidence matrix
of $A(\G_{\T}),$ then $\hat{\V}\times S$ is the row space of the incidence matrix 
of $A(\G_{\T}\times S).$  
By part (1) of the present lemma, $\G_{region(\C)} = \G_{\T}\times S.$
The result follows.

%3. By Theorem \ref{}, $\V\times S$ is the $2-$cycle space of $\C.$
\end{proof}
\section{$\G_{region(\C)} = \G_{comptag(\C)}$ for connected $\C$}
\label{sec:region=tag}
We show in this section that $\G_{region(\C)},\ \G_{comptag(\C)}, $ are identical
if $\C$ is connected (see Definition \ref{def:connect} for definition of connectedness of complexes).
\\

We have seen that the row space
of the incidence matrix of $\G_{region(\C)}$ is the cycle space of $\C.$
Hence, by the discussion in Subsection \ref{subsec:cycleeqns}, if $\G_{region(\C)}$ can be constructed easily,
we have an efficient way of writing equations for the solution
of the electrical $2-$network based on $\C.$
The definition of $\G_{region(\C)}$ requires the notion of path connectedness
in $\T\setminus \C.$ In practice, determining whether two points in $\mathbb{R}^3$
can be connected by a path that avoids specified obstacles is cumbersome.
On the other hand, as we have seen, building $tag(\C)$ and $\G_{comptag(\C)}$ 
from $\C$ embedded in $\mathbb{R}^3 $ is linear time on the size of $\C.$
When $\C$ is disconnected we merely have to repeat the procedure 
for each connected component of $\C.$
Thus, the fact that $\G_{region(\C)} = \G_{comptag(\C)}$ for connected $\C,$
is very useful for our purposes.

We will prove the main theorem of this section through a series of lemmas.
\begin{theorem}
If $\C$ is connected, then $\G_{region(\C)} = \G_{comptag(\C)}.$
\end{theorem}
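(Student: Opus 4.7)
The plan is to establish $\G_{region(\C)} = \G_{comptag(\C)}$ by identifying their vertex sets through a natural map. Both graphs have edge set in bijection with $S^{(2)}(\C)$: one directed edge per triangle $\delta$, running from the equivalence class containing $v_+(\delta)$ to the class containing $v_-(\delta)$. So it suffices to prove that the two partitions of the point set $\{v_+(\delta), v_-(\delta) : \delta \in S^{(2)}(\C)\}$---by region of $\T\setminus\C$ and by connected component of $tag(\C)$---coincide. The inclusion ``same $tag$-component implies same region'' is Lemma \ref{lem:tagpathregionpath}; the work is all in the reverse direction.

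For the reverse, fix a region $\Reg$ and two points $v_{\epsilon_1}(\delta_1), v_{\epsilon_2}(\delta_2) \in \Reg$, and pick a continuous path $\gamma \subseteq \Reg$ between them. After a small perturbation, $\gamma$ traverses a well-defined sequence of tetrahedra of $\C_\T$ all lying in $\Reg$, with consecutive tetrahedra sharing a triangular face not in $\C$. My goal is to modify $\gamma$, by a homotopy inside $\Reg$, into a concatenation of short arcs each living in a single ``pie slice'' at some edge of $\C$---the local chamber bounded by two consecutive triangles of $\C$ incident at that edge. The key local fact is that the two $v_\pm$ points bordering a common pie slice are joined by a $tag(\C)$-edge by the very definition of $tag(\C)$, and that successive pie slices along the modified $\gamma$ will share either a common edge of $\C$ (producing a $tag(\C)$-edge between the relevant $v_\pm$ vertices) or a common triangle of $\C$ traversed on a fixed side (in which case both pie slices contain the same point $v_\pm(\delta)$, giving a trivial $tag(\C)$-identification). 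Concatenating these local links yields the required walk in $tag(\C)$ from $v_{\epsilon_1}(\delta_1)$ to $v_{\epsilon_2}(\delta_2)$.

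The main obstacle will be producing this homotopy rigorously, and this is exactly where the hypothesis of connectedness (equivalently $2$-connectedness, in the sense of Definition \ref{def:connect}) of $\C$ enters: the triangles of $\C$, linked by their shared edges, form a single combinatorially connected structure against which one can ``press'' $\gamma$ to reach $\partial \Reg$ through a sequence of edge-adjacent triangles. Without this hypothesis, a region could wrap around one connected piece of $\C$ while $v_{\epsilon_2}(\delta_2)$ sits against a different piece, and the sliding moves of $tag(\C)$---which only occur across triangles of $\C$---could not bridge the two. Once the homotopy is in place, the translation from local pie-slice data to $tag(\C)$-edges is immediate, the identification of vertex sets follows, and the matching edge data described at the start of the proof then gives $\G_{region(\C)} = \G_{comptag(\C)}$.
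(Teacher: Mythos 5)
Your reduction is sound: both graphs have one directed edge per triangle of $\C$, running from the class of $v_+(\delta)$ to the class of $v_-(\delta)$, so the theorem does come down to showing that the partition of the points $v_{\pm}(\delta)$ by connected component of $tag(\C)$ coincides with the partition by connected region of $\T\setminus\C$; and one inclusion is indeed Lemma \ref{lem:tagpathregionpath}. But the reverse inclusion --- same region implies same $tag(\C)$-component --- is the entire content of the theorem, and you have not proved it; you have only described a plan and then flagged its central step (``producing this homotopy rigorously'') as an open obstacle. That is a genuine gap, not a technicality. Concretely: a path $\gamma$ joining $v_{\epsilon_1}(\delta_1)$ to $v_{\epsilon_2}(\delta_2)$ inside a region may spend most of its length in the bulk of the region, far from any triangle of $\C$, or may pass directly between two nearby triangles of $\C$ that share no edge; in neither situation is there any local ``pie slice'' to invoke, so the claimed decomposition of $\gamma$ into pie-slice arcs does not exist until you have first rerouted $\gamma$ so that it hugs the surface of $\C$ and only ever transfers between triangles across shared edges. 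Your justification for why such a rerouting exists --- that the triangles of $\C$ form a single edge-connected structure --- is not sufficient: combinatorial connectedness of $\C$ does not by itself imply that the portion of the boundary of a given region lying in $\C$ is connected under the sliding moves of $tag(\C)$, and making that implication rigorous is essentially as hard as the theorem itself.

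It is worth contrasting this with the paper's argument, which sidesteps the direct topological construction entirely. The paper uses only the easy direction (Lemma \ref{lem:tagpathregionpath}) to get a well-defined map from $tag(\C)$-components onto regions, and then forces this map to be a bijection by counting: rows of $A(\G_{comptag(\C)})$ are $2$-cycles of $\C$ (Lemma \ref{lem:incidence}), the row space of $A(\G_{region(\C)})$ \emph{is} the $2$-cycle space of $\C$ (Lemma \ref{lem:cycleregion}, which rests on the contractibility of $\T$ via Theorem \ref{Hurewicz2}), and both graphs are connected, so by Lemma \ref{lem:incidencerank} the number of $tag(\C)$-components is at most the number of regions while surjectivity gives the opposite inequality. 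Connectedness of $\C$ enters only to guarantee that $\G_{comptag(\C)}$ is connected so that the rank count applies. If you want to keep your direct approach, you must actually construct the deformation of $\gamma$ onto the surface of $\C$ and justify each transfer between triangles; otherwise you should replace that step with a linear-algebraic counting argument of the kind the paper uses.
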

Let the edges of  $\G_{region(\C)},\   \G_{comptag(\C)}$
be named according to the triangles of $\C,$ to which
they correspond.
\begin{lemma}
The row space of $A(\G_{comptag(\C)})$ is contained in the row space
of $A(\G_{region(\C)}).$
\begin{proof}
By Lemma \ref{lem:incidence}, rows of $A(\G_{comptag(\C)})$ are $2-$cycles of $\C,$
and by  Lemma \ref{lem:cycleregion}, row space of $A(\G_{region(\C)})$ is the 
space of $2-$cycles of $\C.$
The result follows.
\end{proof}
\end{lemma}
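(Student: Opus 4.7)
The plan is to combine two already-established facts: Lemma \ref{lem:incidence} tells us that every row of $A(\G_{comptag(\C)})$ is a $2-$cycle of $\C$, and Lemma \ref{lem:cycleregion}(3) identifies the row space of $A(\G_{region(\C)})$ as precisely the full $2-$cycle space of $\C.$ Once both are invoked, the containment is automatic: any vector lying in the row space of $A(\G_{comptag(\C)})$ is a linear combination of rows that individually lie in the $2-$cycle space, hence lies in that space, hence lies in the row space of $A(\G_{region(\C)}).$

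Concretely, I would first fix the convention that edges of $\G_{comptag(\C)}$ and of $\G_{region(\C)}$ are both indexed by triangles of $\C,$ so that the rows of the two incidence matrices are vectors on the common index set $S^{(2)}(\C).$ This matching of column sets is what lets us even talk about one row space being contained in the other. Then I would observe that, under this indexing, Lemma \ref{lem:incidence} places each row of $A(\G_{comptag(\C)})$ in the orthogonal complement of the row space of the $2-$coboundary matrix $A^{(2)}(\C),$ which by Theorem \ref{cycle_cob} is exactly the $2-$cycle space of $\C.$

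Next, I would apply Lemma \ref{lem:cycleregion}(3), which supplies the reverse identification: the row space of $A(\G_{region(\C)})$ equals the $2-$cycle space of $\C.$ Combining the two gives
\[
\operatorname{rowspace}\bigl(A(\G_{comptag(\C)})\bigr)\subseteq \{2\text{-cycles of }\C\}=\operatorname{rowspace}\bigl(A(\G_{region(\C)})\bigr),
\]
which is the claim.

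There is no real obstacle here; the statement is essentially a bookkeeping consequence of the two preceding lemmas, and the work was done in establishing those. The only thing to be careful about is that the column indexing of the two incidence matrices is consistent (both indexed by triangles of $\C$ with matching orientations), so that ``row space containment'' is meaningful as a statement about subspaces of the same ambient vector space on $S^{(2)}(\C).$ The genuinely substantive direction, namely the reverse inclusion needed to conclude $\G_{region(\C)}=\G_{comptag(\C)},$ is what the subsequent lemmas of the section will presumably have to address.
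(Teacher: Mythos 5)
Your proposal is correct and follows exactly the paper's own argument: cite Lemma \ref{lem:incidence} to place the rows of $A(\G_{comptag(\C)})$ in the $2$-cycle space of $\C,$ and Lemma \ref{lem:cycleregion} to identify that space with the row space of $A(\G_{region(\C)}).$ Your additional care about indexing both edge sets by the triangles of $\C$ matches the convention the paper fixes immediately before stating the lemma.
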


%---------------------------------------
%\begin{lemma}
%\label{lem:selfloop}
%An edge $\delta$ is a self loop in $\G_{tag(\C)}$
%iff it is a self loop in $\G_{region\C}.$
%\end{lemma}
%\begin{proof}
%An edge $\delta$ is a self loop in $\G_{tag(\C)},$
% iff there is a path in $tag(\C)$ between $v_{+}(\delta)$ and
%$v_{-}(\delta).$
%By Lemma \ref{lem:tagpathregionpath}, this means there is a path in $\T-\C$ between
%$v_{+}(\delta)$ and
%$v_{-}(\delta)$
%and therefore, by the definition of $\G_{region\C},$  $\delta $ is a self loop 
%in $\G_{region\C}.$
%Next, suppose $\delta$ is a self loop in $\G_{region\C}.$
%This means the column $\delta $ is a zero column in $A(\G_{region\C}).$
%But, by Lemma \ref{lem:cycleregion}, the row space of $A(\G_{region\C}), $ is the $2-$cycle space of $\C.$
%Therefore $\delta$ does not lie in the support of any $2-$cycle of $\C.$
%Since, by  Lemma \ref{lem:incidence}, rows of $A(\G_{tag(\C)})$ are $2-$cycles of $\C,$ it follows that $\delta$ is a zero column in $A(\G_{tag(\C)}),$  
%i.e., $\delta$ is a self loop in $\G_{tag(\C)}.$
%\end{proof}
%--------------------------------------
\begin{lemma}
\label{lem:twographequality}

%Let the edges of  $\G_{region\C},  \G_{tag(\C)}$
%be named according to the triangles of $\C,$ to which 
%they correspond.

1. For each row $i$ of $A(\G_{comptag(\C)})$ there exists a row $i'$ of 
 $A(\G_{region(\C)})$ such that whenever  
$A(\G_{comptag(\C)})(i,j)$ is nonzero, we have $A(\G_{comptag(\C)}(i,j)=A(\G_{region(\C)})(i',j)$ and further, if $\delta$ is a selfloop incident  at $i$ in $\G_{comptag(\C)}$
 it is a selfloop incident at $i'$  in $\G_{region(\C)}.$

2. When $\C$ is connected, this map $i \rightarrow i'$ is one to one onto
and the graphs $\G_{comptag(\C)},\ \G_{region(\C)}$ are identical under the renaming
of vertex $i$ of $\G_{comptag(\C)}$ as vertex $i'$ of $\G_{region(\C)}.$ 
\end{lemma}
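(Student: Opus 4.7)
The plan is to introduce the natural map $\phi$ that sends each connected component $V_i$ of $tag(\C)$ to the unique region $\Reg_{i'}$ of $\T\setminus\C$ containing it. By Lemma \ref{lem:tagpathregionpath}, all vertices of $V_i$ lie in one common region, so setting $\phi(V_i):=\Reg_{i'}$ is well-defined, and $V_i\subseteq\Reg_{\phi(V_i)}$. I will show that part 1 boils down to injectivity of $\phi$, while part 2 follows once $\phi$ is a bijection; both will be established using the hypothesis that $\C$ is connected.

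Granting injectivity, part 1 is immediate. For the self-loop clause, if $\delta$ is a self-loop at $V_i$ in $\G_{comptag(\C)}$ then $v_+(\delta),v_-(\delta)\in V_i\subseteq\Reg_{i'}$, so $\delta$ is a self-loop at $\Reg_{i'}$ in $\G_{region(\C)}$. For the matching clause, suppose $A(\G_{comptag(\C)})(V_i,\delta)=+1$, so $v_+(\delta)\in V_i$ and $v_-(\delta)\notin V_i$. Then $v_+(\delta)\in\Reg_{i'}$; and if $v_-(\delta)\in\Reg_{i'}$, it would sit in some component $V_k\subseteq\Reg_{i'}$ distinct from $V_i$ yet with $\phi(V_k)=\phi(V_i)$, contradicting injectivity of $\phi$. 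Hence $v_-(\delta)\notin\Reg_{i'}$ and $A(\G_{region(\C)})(\Reg_{i'},\delta)=+1$; the entry $-1$ is handled symmetrically. Part 2 then follows once $\phi$ is a bijection, since both graphs carry the same edge set (the triangles of $\C$) and each edge's incidence transports correctly under the vertex renaming.

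To establish bijectivity I argue by rank counting. For surjectivity of $\phi$, every region $\Reg$ has at least one triangle of $\C$ on its boundary (as $\C$ is nonempty and sits in the interior of $\T$), and that triangle contributes a $v_{\pm}(\delta)$ inside $\Reg$ belonging to some tag component. For injectivity, observe that the row of $A(\G_{region(\C)})$ indexed by $\Reg$ equals the sum of the rows of $A(\G_{comptag(\C)})$ indexed by tag components $V$ with $\phi(V)=\Reg$, because the $v_{\pm}(\delta)$ lying in $\Reg$ partition into precisely those components. Hence the row space of $A(\G_{region(\C)})$ is contained in that of $A(\G_{comptag(\C)})$. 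The reverse inclusion holds by Lemma \ref{lem:incidence} (tag rows are $2$-cycles of $\C$) combined with Lemma \ref{lem:cycleregion} (region rows span the full $2$-cycle space of $\C$), so the two row spaces coincide and have equal rank.

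The main obstacle is showing $\G_{comptag(\C)}$ is connected when $\C$ is; once this is done, its incidence matrix has rank $|V(\G_{comptag(\C)})|-1$, matching the rank $|V(\G_{region(\C)})|-1$ of $A(\G_{region(\C)})$ from Lemma \ref{lem:cycleregion}, and the surjective $\phi$ is forced to be injective by cardinality. The connectedness argument rests on the following observation. At any edge $e$ of $\C$ with incident triangles $\sigma_1,\ldots,\sigma_m$ in rotational order about $e$ (reoriented if necessary to agree with $e$), the tag edge $(v_+(\sigma_i),v_-(\sigma_{i+1}))$ puts $v_+(\sigma_i)$ and $v_-(\sigma_{i+1})$ in the same tag component, which in $\G_{comptag(\C)}$ is simultaneously the $+$-endpoint of edge $\sigma_i$ and the $-$-endpoint of edge $\sigma_{i+1}$; hence edges $\sigma_i$ and $\sigma_{i+1}$ share a vertex. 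Combined with the fact that each edge $\sigma_i$ itself connects the tag components containing $v_+(\sigma_i)$ and $v_-(\sigma_i)$, all tag components containing any $v_{\pm}(\sigma_j)$ for $j=1,\ldots,m$ lie in one connected component of $\G_{comptag(\C)}$. Because $\C$ is connected as a $2$-complex, every pair of triangles is linked by a chain of edge-sharing triangles, so iterating places the tag component of every $v_{\pm}(\delta)$, and hence every vertex of $\G_{comptag(\C)}$, into a single connected component.
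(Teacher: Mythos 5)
Your proof of part 2 is correct and follows essentially the paper's route: well-definedness of the map $i\mapsto i'$ from Lemma \ref{lem:tagpathregionpath}, surjectivity, and a rank/cardinality count that uses connectedness of both graphs. (The paper gets by with the single row-space inclusion coming from Lemmas \ref{lem:incidence} and \ref{lem:cycleregion}, whereas you also prove the reverse inclusion via the row-sum identity; either suffices.) Your explicit argument that $\G_{comptag(\C)}$ is connected when $\C$ is connected usefully fills in a step the paper dismisses as ``easy to see.''

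The gap is in part 1. The lemma asserts part 1 for an arbitrary $2$-complex $\C$; only part 2 carries the hypothesis that $\C$ is connected. You derive the entry-matching claim of part 1 from injectivity of $\phi$, and you establish injectivity only through the rank count, which requires $\G_{comptag(\C)}$ --- hence $\C$ --- to be connected. So, as written, your argument proves part 1 only in the connected case, i.e.\ a weaker statement than the one claimed. The claim is nonetheless true in general, and the fix is available from lemmas you already invoke: suppose $A(\G_{comptag(\C)})(V_i,\delta)\neq 0.$ That row is a $2$-cycle of $\C$ by Lemma \ref{lem:incidence}, hence lies in the row space of $A(\G_{region(\C)})$ by Lemma \ref{lem:cycleregion}. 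If $v_+(\delta)$ and $v_-(\delta)$ lay in the same region of $\T\setminus\C,$ the edge $\delta$ would be a self-loop of $\G_{region(\C)},$ its column in $A(\G_{region(\C)})$ would be zero, and every vector in that row space would vanish at $\delta$ --- contradicting the assumed nonzero entry. Hence $v_+(\delta)$ and $v_-(\delta)$ lie in distinct regions, and the entry at $(\Reg_{i'},\delta)$ equals that at $(V_i,\delta)$ with no appeal to injectivity or to connectedness. With this substitution your part 1 stands in full generality, and injectivity is then needed only where it is legitimately available, namely in part 2.
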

\begin{proof}
1. By definition, a row of $A(\G_{comptag(\C)})$ corresponds to the vertex set
of a connected component of ${tag(\C)}.$ By Lemma \ref{lem:tagpathregionpath}, we know that
whenever a vertex, say $v_{+}(\delta),$ is path connected in ${tag(\C)}$
to a vertex, say $v_{-}(\delta'),$
it is also path connected in $\T\setminus \C$
and therefore these vertices belong to the same 
connected region of $\T\setminus \C.$ Each vertex set of a connected component
of $tag(\C)$ corresponds to a single connected region in which it lies
in $\T\setminus \C.$
So to every vertex $i$ of $\G_{comptag(\C)},$ there corresponds a unique vertex 
$i'$ of $\G_{region(\C)},$ such that
%whenever a non selfloop edge $\delta$ leaves (enters) a vertex $i$ 
%in $\G_{tag(\C)},$ 
%%it also leaves (enters) the 
%corresponding vertex $i'$ of   $\G_{region(\C)}.$
whenever an  edge $\delta$ leaves (enters) a vertex $i$
in $\G_{comptag(\C)},$
it also leaves (enters) the
corresponding vertex $i'$ of   $\G_{region(\C)}.$
This is also true of selfloops of $\G_{comptag(\C)}.$ 
Thus, if selfloop $\delta$ is incident at $i$ in $\G_{comptag(\C)},$ it is incident
at $i'$ in $\G_{region(\C)}.$ 
%By Lemma \ref{lem:selfloop}, the sets of selfloops of the two graphs are identical.
%Thus, if selfloop $\delta$ is incident at $i$ in $\G_{tag(\C)},$ it is incident
%at $i'$ in $\G_{region(\C)}.$ 
%The non selfloop edges are identical in both graphs. 
\\

2. Since, by Lemma \ref{lem:cycleregion}, $\G_{region(\C)}$ is connected, it has no isolated nodes. 
%By definition, $\G_{region(\C)}$ is connected
%and therefore has no isolated nodes. 
Therefore , since the edge sets of both graphs are identical, the map $i\rightarrow i',$ from vertex set of $\G_{comptag(\C)},$ 
to vertex set of $\G_{region(\C)}$
is onto.

If $\C$ is connected it is easy to see that $\G_{comptag(\C)}$
is connected. For connected graphs, by Lemma \ref{lem:incidencerank},
the row rank of the  incidence matrix is one less than the  number of nodes of the
graph.
By Lemma \ref{lem:incidence}, Lemma \ref{lem:cycleregion}, the row space of $\G_{comptag(\C)}$  is contained in the
row space of $\G_{region(\C)}.$
Thus the number of nodes of $\G_{comptag(\C)}$
is less or equal to the number of nodes of $\G_{region(\C)}.$
But the map $i\rightarrow i'$ is onto and therefore we must have the number of nodes 
the same for both graphs and therefore the map $i\rightarrow i'$ is one to one onto. 
%Thus the map $i\rightarrow i'$ is one to one  onto
%and 
Further, an edge $\delta$ is directed from  $i$ to $j$ in $\G_{comptag(\C)}$
iff it is directed from $i'$ to $j'$ in $\G_{region(\C)}.$
Thus, under the renaming of node $i$ of $\G_{comptag(\C)}$
as node $i'$ of $\G_{region(\C)},$
the two graphs are identical.

%The non selfloop edges are identical in both graphs.
% and the number of non selfloop 
%edges of node $i$ in $\G_{tag(\C)}$ is less or equal to that of node  $i'$ in $\G_{region(\C)}.$ 
%The sum of these numbers over all the nodes in each graph is the same
%being equal to twice the number of non selfloop edges.
\end{proof}
\begin{remark}
If the $2-$complex $\C$ is not connected, we first build $\G_{comptag(\C_i)}$
for each of the connected components $\C_i$ of $\C.$ 
It will turn out that each row of  $A(\G_{comptag(\C_i)})$ except one 
would be identical to a corresponding row of $A(\G_{region(\C)}).$
If we add these exceptional rows we would get the row corresponding to the 
external node of $\G_{region(\C)}$
and this would complete the construction of $A(\G_{region(\C)})$ and therefore of $\G_{region(\C)}.$
\end{remark}
\section{Matroid dual of the   $(2,1)$ skeleton of $\C$}
\label{sec:matroid}
In this section, we explore the matroidal relation between the $2-$complex $\C$
and the graph $\G_{region(\C)}.$
Our statements are in general correct for $2-$complex $\C$ and $\G_{region(\C)}.$
If $\C$ is connected $\G_{comptag(\C)}=\G_{region(\C)}.$
Otherwise, we can build $\G_{region(\C)}$ from the disconnected $\G_{comptag(\C)}$ 
by fusing the nodes corresponding to the external region in each connected component of the latter.

It is well known that if a graph is nonplanar, its matroid dual will not correspond to a graph.
It is a natural question to ask whether the dual of a nonplanar graph has a simple
representation. We will sketch arguments to show that any nonplanar 
graph  is the minor of another nonplanar graph whose matroid dual is the  $(2,1)$ skeleton
of a $2-$complex embedded in $\mathbb{R}^3.$

We first discuss some preliminary ideas from matroid theory.

The following two results are fundamental in matroid theory \cite{tutte} , \cite{welsh76}, \cite{oxleybook}.
\begin{theorem}
Let $\M=(S,\mathcal{I})$ be a matroid
 and let $T\subseteq S.$
\begin{enumerate}
\item Let  ${\mathcal{I}}\circ T$ be the family of members of ${\mathcal{I}}$
contained in $T.$ Then $\M\circ T:= (T, {\mathcal{I}}\circ T)$ is a matroid.

\item Let  ${\mathcal{I}}\times T$ be the family of all members $X$ of ${\mathcal{I}}$
contained in $T$ with the property that $X\cup \B_{S-T}$ is a member of ${\mathcal{I}}$
whenever $\B_{S-T}$ is a base of $\M\circ (S-T).$
Then  $\M\times T:= (T, {\mathcal{I}}\times T)$ is a matroid.

\item
 Let ${\mathcal{I}}^*$ be the family of subsets of $S$
which are subsets of cobases of $\M.$
Then $(S, {\mathcal{I}}^*)$ is a matroid.
\end{enumerate}
\end{theorem}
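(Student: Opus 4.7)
The plan is to verify the paper's matroid axiom---that for every subset of the ground set, all maximal members of the independence family contained in it have the same cardinality---separately for each of the three constructions.

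Part (1) is immediate. For any $A \subseteq T$, the maximal members of $\mathcal{I} \circ T$ contained in $A$ are exactly the maximal members of $\mathcal{I}$ contained in $A$ (since $A \subseteq T$), so they share a common size by the axiom applied to $\M$.

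Part (2) rests on two reductions. First I would establish the equivalent characterization that $X \in \mathcal{I} \times T$ iff $X \subseteq T$ and $X \cup \B_{S-T} \in \mathcal{I}$ for \emph{some} base $\B_{S-T}$ of $\M \circ (S-T)$. For the \emph{some}-implies-\emph{every} direction, apply the axiom for $\M$ to $X \cup (S-T)$: since $\B_{S-T}$ is maximal in $S-T$, downward closure shows $X \cup \B_{S-T}$ cannot be enlarged by any $y \in (S-T)-\B_{S-T}$, hence is a maximal independent subset of $X \cup (S-T)$ of size $|X|+|\B_{S-T}|$. For any other base $\B'_{S-T}$ of $\M \circ (S-T)$, extend $\B'_{S-T}$ (which again cannot absorb elements of $S-T$) to a maximal independent subset of $X \cup (S-T)$ of the form $\B'_{S-T} \cup Z$ with $Z \subseteq X$; the cardinality bound forces $Z=X$, and so $X \cup \B'_{S-T} \in \mathcal{I}$. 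With this characterization in hand, fix a base $\B_{S-T}$ and, for any $T' \subseteq T$, consider the map $X \mapsto X \cup \B_{S-T}$. I would show that this sends maximal members of $\mathcal{I} \times T$ within $T'$ bijectively onto maximal members of $\mathcal{I}$ within $T' \cup (S-T)$: any augmentation of $X \cup \B_{S-T}$ inside $T' \cup (S-T)$ must add either an element of $S-T$ (ruled out by maximality of $\B_{S-T}$) or an element of $T'$ (corresponding to enlarging $X$). Since all maximal members of $\mathcal{I}$ within $T' \cup (S-T)$ have a common size by the axiom for $\M$, and $|\B_{S-T}|$ is fixed, so do the maximal $X$.

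Part (3) uses the invariant $\rho(A) := $ the common size of maximal members of $\mathcal{I}$ contained in $A$, well defined by the axiom for $\M$. Observe that $A \in \mathcal{I}^*$ iff $S - A$ contains a base of $\M$ iff $\rho(S-A)=\rho(S)$. For any $X \subseteq S$ and $A \subseteq X$ with $A \in \mathcal{I}^*$, any base $B \subseteq S-A = (S-X) \cup (X-A)$ satisfies $|B \cap (S-X)| \leq \rho(S-X)$, whence $|X-A| \geq |B \cap (X-A)| = |B| - |B \cap (S-X)| \geq \rho(S) - \rho(S-X)$. Now suppose $A$ is maximal in $\mathcal{I}^*$ within $X$. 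A short exchange argument inside $\M$ produces a base $B \subseteq S-A$ with $|B \cap (S-X)| = \rho(S-X)$: starting from any base $B_0 \subseteq S-A$, if $|B_0 \cap (S-X)| < \rho(S-X)$, pick $y \in S-X$ augmenting $B_0 \cap (S-X)$, form the fundamental circuit of $y$ with respect to $B_0$ (which must meet $B_0 \cap X$), and swap out an element of $B_0 \cap X$; iterate. For the resulting $B$, $|B \cap (X-A)| = \rho(S) - \rho(S-X)$, so if $|X-A| > \rho(S) - \rho(S-X)$, there exists $x \in (X-A) - B$, giving $A \cup \{x\} \in \mathcal{I}^*$ (witnessed by the same $B$), contradicting maximality. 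Hence $|A| = |X| - \rho(S) + \rho(S-X)$, a constant.

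The main obstacle will be Part (3): the upper bound on $|A|$ is clean, but showing every inclusion-maximal $A$ achieves equality requires producing a base of $\M$ in $S-A$ that meets $S-X$ maximally, which depends on the small fundamental-circuit exchange argument within $\M$. Part (2), while technically intricate, reduces cleanly once the \emph{some}-base characterization of $\mathcal{I} \times T$ is in place.
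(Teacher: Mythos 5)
The paper offers no proof of this theorem: it is presented as a standard fact of matroid theory with references to Tutte, Welsh and Oxley, so there is no in-paper argument to compare against step by step. Your proposal is a correct, self-contained verification directly from the paper's defining axiom (equicardinality of maximal members of $\mathcal{I}$ inside every subset of $S$), and all three parts go through; the ``some-base implies every-base'' characterization of $\mathcal{I}\times T$ and the rank formula $|A|=|X|-\rho(S)+\rho(S-X)$ for maximal $A\in\mathcal{I}^*$ contained in $X$ are exactly the right pivots. Three small remarks. First, in part (2) the map $X\mapsto X\cup \B_{S-T}$ is not in general \emph{onto} the maximal independent subsets of $T'\cup(S-T)$: such a subset need not contain the fixed base $\B_{S-T}$ (already in the uniform matroid $U_{1,2}$ with $T'=\emptyset$). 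Fortunately your argument only uses the forward direction, namely that the image of each maximal $X$ is a maximal independent subset of $T'\cup(S-T)$, which is true and suffices to pin down $|X|$. Second, the base $B\subseteq S-A$ with $|B\cap(S-X)|=\rho(S-X)$ in part (3) can be produced without fundamental circuits: take a maximal independent subset of $S-X$ and extend it greedily to a maximal independent subset of $S-A$; the latter has size $\rho(S-A)=\rho(S)$ (since $A\in\mathcal{I}^*$), hence is a base of $\M$ meeting $S-X$ in the required number of elements. Third, your arguments repeatedly invoke the hereditary property of $\mathcal{I}$ (subsets of independent sets are independent), which the paper's minimalist definition of a matroid does not state explicitly; it is worth flagging that you are assuming it, as the theorem would not even be well posed without it.
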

\begin{definition}
\label{def:matroiddotcross}
Let $\M$ be a matroid on $S$ and let $T\subseteq S.$
The matroid $\M\circ T:= (T, {\mathcal{I}}\circ T)$ is   called the restriction of $\M$ to $T.$
The matroid $\M\times T:= (T, {\mathcal{I}}\times T)$ is  called the contraction of $\M$ to $T.$
The matroid $\M^*:= (S, {\mathcal{I}}^*)$ is  called the dual of $\M.$

\end{definition}
%If $\M\equiv (S,\mathcal{I})$ is a matroid, the matroid $(S, {\mathcal{I}}^*)$ is denoted by $\M^*$
%and called the matroid dual of $\M.$
%The natural question is the converse. Can one build a $2-$complex $\C$ 
%whose $2-$skeleton  is the matroid dual of a nonplanar graph $\G?$
%We will sketch an algorithm for building a larger graph $\G_{large}$
%which has such a matroid dual and from which by contracting and deleting
%edges we can get back $\G.$
%----------------------------------------------------------------------------------

%-----------------------------------------------------------------------------------------
\begin{theorem}
\label{thm:matroidspace}
Let $\V$ be a vector space on a finite set $S$ and let $T\subseteq S.$  Then
\begin{enumerate}
\item $\M(\V^{\perp})=(\M(\V))^*.$
\item $\M(\V\circ T)=(\M(\V))\circ T.$
\item $\M(\V\times T)=(\M(\V))\times T.$
\end{enumerate}

\end{theorem}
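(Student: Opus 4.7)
The plan is to prove each of the three identities by working directly with a representative matrix $A$ of $\V$ and reading off the independence structure of its column subsets. For part 1, I would fix a base $B$ of $\M(\V)$ and take the standard representative matrix $Q_B = \bbmatrix{I & | & K}$ of $\V$ with respect to $B$ (after a suitable column permutation). As observed inside the proof of Theorem \ref{minimal_support}, the matrix $\bbmatrix{-K^T & | & I}$ is then a representative matrix of $\V^\perp$, and its identity block is indexed by $S \setminus B$, so $S \setminus B$ is a base of $\M(\V^\perp)$. Running the same argument with the roles of $\V$ and $\V^\perp$ interchanged shows that $B \mapsto S \setminus B$ is a bijection between the base families of $\M(\V)$ and $\M(\V^\perp)$; since a matroid is determined by its base family and the bases of $(\M(\V))^*$ are, by definition, the complements of bases of $\M(\V)$, this yields $\M(\V^\perp) = (\M(\V))^*$.

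For part 2, let $A|_T$ denote the submatrix of $A$ consisting of the columns indexed by $T$. The row space of $A|_T$ is exactly $\V \circ T$, though its rows may now be linearly dependent; a row reduction of $A|_T$ produces a representative matrix of $\V \circ T$ without altering the linear dependencies among its columns. Thus for any $X \subseteq T$, the columns of $A$ indexed by $X$ are linearly independent iff $X$ is independent in $\M(\V \circ T)$. Since the former condition is exactly what it means for $X$ to be independent in $\M(\V) \circ T$, the two matroids coincide.

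For part 3, I would choose a basis of $\V$ adapted to $T$: start with a basis $y_1, \ldots, y_k$ of the subspace $\{y \in \V : y(e) = 0 \text{ for all } e \notin T\}$ (whose restrictions to $T$ form a basis of $\V \times T$), then extend it to a basis $y_1, \ldots, y_r$ of $\V$. With respect to this basis the representative matrix acquires the block form
\[
A \ =\ \bbmatrix{A_{1,T} & 0 \\ A_{2,T} & A_{2,S-T}},
\]
with $A_{1,T}$ a representative matrix of $\V \times T$ and the columns of $A_{2,S-T}$ forming a representative matrix of $\V \circ (S-T)$. Fix a base $B_{S-T}$ of $\M(\V) \circ (S-T) = \M(\V \circ (S-T))$ (using part 2). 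A block-triangular dependency argument then shows that for $X \subseteq T$, the columns of $A$ indexed by $X \cup B_{S-T}$ are linearly independent iff the columns of $A_{1,T}$ indexed by $X$ are linearly independent: any relation $\sum_{i \in X} \alpha_i A_{\cdot,i} + \sum_{j \in B_{S-T}} \beta_j A_{\cdot,j} = 0$, read off in the top block of rows, forces $\sum_{i \in X} \alpha_i A_{1,i} = 0$; conversely, any dependence among the columns of $A_{1,T}$ indexed by $X$ lifts to a dependence in the full matrix because $B_{S-T}$ spans the column space of $A_{2,S-T}$. The former condition defines independence of $X$ in $\M(\V) \times T$ and the latter defines independence of $X$ in $\M(\V \times T)$. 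The main obstacle is precisely this block-triangular bookkeeping, which is what ties the abstract, base-dependent definition of matroid contraction used in the excerpt to the linear algebra of the adapted basis; by comparison, parts 1 and 2 are essentially direct consequences of the representative-matrix calculus already developed.
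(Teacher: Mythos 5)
The paper itself offers no proof of Theorem \ref{thm:matroidspace}: it is presented, together with the preceding theorem, as a standard fact of matroid theory with citations to Tutte, Welsh and Oxley. Your representative-matrix argument is a correct, self-contained proof, and it sits naturally inside the linear-algebraic framework the paper does develop: the observation that $\bbmatrix{I&|&K}$ represents $\V$ iff $\bbmatrix{-K^T&|&I}$ represents $\V^{\perp}$, which drives your part 1, is exactly the one the paper already invokes in the proof of Theorem \ref{minimal_support}, and part 2 is immediate once one notes that row operations and deletion of zero rows preserve column dependencies. Two small points in part 3 should be made explicit. First, in the converse direction you must rewrite $w:=\sum_{i\in X}\alpha_i A_{2,i}$, which is a combination of columns of $A_{2,T}$ and not of $A_{2,S-T}$, as a combination of the columns of $A_{2,S-T}$ indexed by $B_{S-T}$; this works because $A_{2,S-T}$ is a representative matrix of $\V\circ(S-T)$ (via its rows, not its columns, as you write) and hence has full row rank $r-k$, so its columns --- and therefore already the base columns $B_{S-T}$ --- span all of $\mathbb{R}^{r-k}$. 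Saying only that $B_{S-T}$ spans the column space of $A_{2,S-T}$ leaves this absorption step implicit. Second, the paper's definition of $\M\times T$ quantifies over \emph{all} bases of $\M\circ(S-T)$ while you fix one; since your equivalence is established for an arbitrary base and the resulting condition on $X$ is base-independent, the two formulations coincide, but this deserves a sentence. Neither point affects correctness.
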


%----------------------------------------------------------------------------------

%-------------------------------------------------------------------

We have, by Lemma \ref{lem:cycleregion},  that the $2-$coboundary space of the $2-$complex $\C$ and the $1-$coboundary space 
of the graph $\G_{region(\C)}$ are complementary orthogonal.
The matroid $\M(\V(\G))$ associated with the $1-$coboundary space $\V(\G)$
of $\G$ is said to be also associated with $\G$ and denoted $\M(\G).$
Let us denote by $\V(\C),$  the $2-$coboundary space of $\C$
and call $\M(\V(\C)),$
the matroid $\M(\C)$
associated with the  $(2,1)$ skeleton of $\C.$
By Theorem \ref{thm:matroidspace}, we have that $\M(\V^{\perp})=(\M(\V))^*.$
So $(\M(\C))^*= \M(\G_{comptag(\C)}).$
Thus the matroid dual of the  $(2,1)$ skeleton of $\C$ is a graph which may in general 
be nonplanar.
%It is well known that if a graph is nonplanar, its matroid dual will not correspond to a graph.
%It is a natural question to ask whether the dual of a nonplanar graph has a simple
%representation. We will sketch arguments to show that any nonplanar 
%graph  is the minor of a (nonplanar) graph whose matroid dual is the $2-$skeleton
%of a $2-$complex embedded in $\Re^3.$

%------------------------------------------------------------------------------------
The natural question is the converse. Can one build a $2-$complex $\C$
whose  $(2,1)$ skeleton  is the matroid dual of a nonplanar graph $\G_{orig}$ ?
We will sketch an algorithm for building a larger graph $\G_{large}$
which has such a matroid dual and from which by 
%contracting and 
deleting a single vertex and edges incident on it,
%edges and a single vertex 
we can get back $\G_{orig}.$
It is well known that a graph $\G_{orig}$ is nonplanar iff it 
contains  the Kuratowski graphs $K_{5},K_{3,3}$ as minors.
Therefore if $\G_{orig}$ is nonplanar, so is $\G_{large}.$

The method we use is to construct  regions surrounding the individual  nodes 
in such a way that regions corresponding to two nodes have a common boundary iff
there is an edge in $\G_{orig}$ between them.
But this method leaves us with an extra `external' region which has common boundary 
with every one of the regions corresponding to the nodes of $\G_{orig}.$
The $2-$complex made up of the boundary triangles of all the regions
will turn out to be the matroid dual of a larger graph $\G_{large}.$

Figure \ref{fig:planar2} indicates a simplified version of the problem.
The original graph $\G_{orig}$ should be thought of as embedded  in $\mathbb{R}^3$
even though for diagrammatic simplicity we have used a planar representation.
We surround the nodes by `tubes' $\Reg_1, \cdots , \Reg_6,$ whose surfaces are triangulated but not shown.
The tubes  have certain common triangles with other tubes. These are indicated as $e_1',\cdots , e_6'.$ 
In addition, there is an external region $\Reg_{ext}$ which has common triangles 
with all the tubes. Let the complex $\C$ be composed of all surface triangles,
their boundary edges and vertices. The resulting region graph, $\G_{large}$
of $\C,$ 
is shown alongside. The dotted lines indicate sets of parallel edges corresponding
to triangles common to the external region and the other `tubes'.
The $2-$coboundary space $\V(\C)$ and the $2-$cycle space $\V(\C)^{\perp}$ 
respectively of $\C$ would be the $1-$cycle space and $1-$coboundary space  of 
$\G_{large}$ if the orientations of triangles $\C$ and the edges of $\G_{large}$ 
are consistent.
\begin{figure}
\label{fig:planar2}
\centering
 \includegraphics[width=5in]{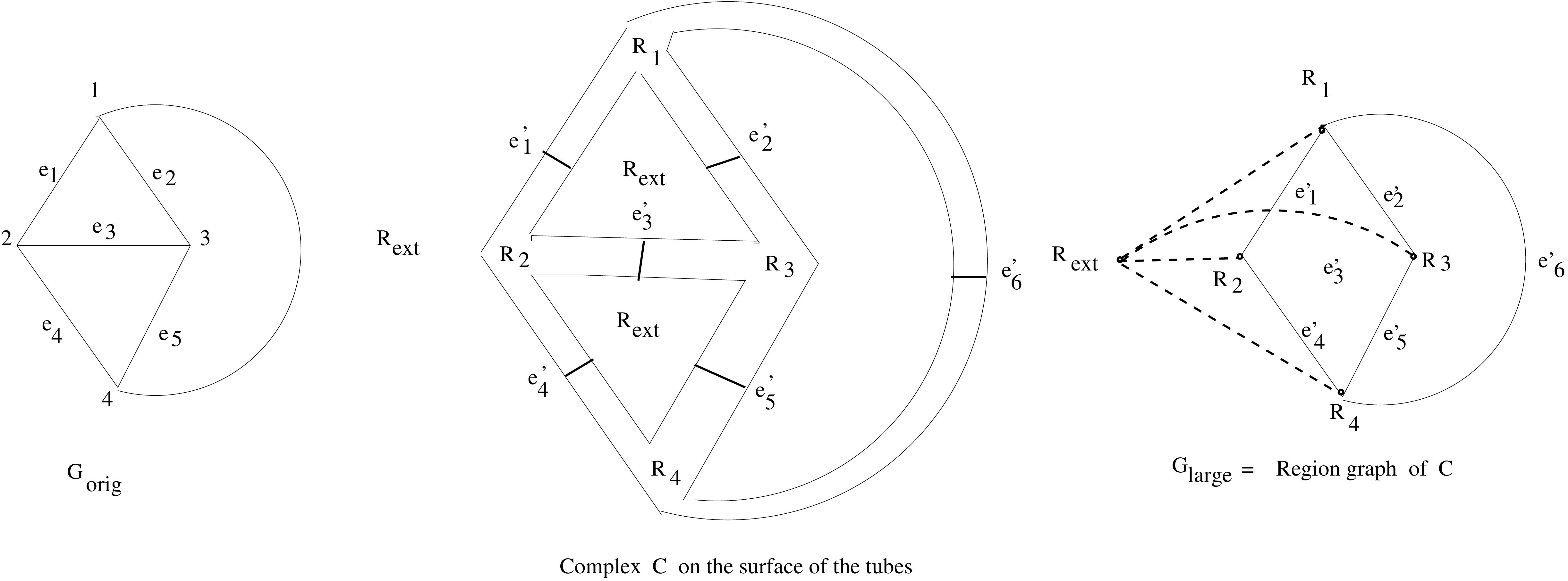}
 \caption{From $\G_{orig}$ to $\G_{large}$}
 \label{fig:triangles_edge}
\end{figure}

\begin{remark}
The matroid dual has to be distinguished from the Lefchetz/Poincar\'{e} duality of complexes (see \cite{hatcher}, \cite{munkres}).
In general the latter do not lead to matroid duality. For instance, when a graph is drawn on a torus without edges crossing each other, the coboundary space of its Lefschetz dual will have dimension less (by genus of torus)
than the dimension of the cycle space of the original graph. In general when a $2-$complex $\C$ is embedded 
in a three dimensional space, say $\mathcal{S},$  the region graph will be the Lefschetz dual but unless the 
space $\mathcal{S}$ is contractible, the region graph may not be the matroid dual of $\C$ and will not allow us to write equations for an electrical $2-$network
based on $\C.$
\end{remark}

\section{Dual of an electrical $2-$network}
\label{sec:dualnetwork}
From the discussions in the previous sections, it is reasonable to expect that if an electrical $2-$network is built
on a $2-$complex $\C,$ we could build a `dual' electrical network of the graph based kind, the solution of which
would yield a solution of the  $2-$network.
We spell out the details in this section.

First we dualize constraints $K_1,K_2$ of Subsection \ref{def:flux_mmf}.\\

Constraint $K_1$ (Generalized KVL) can be simplified to read\\
``the net outward flux $\phi_S$ through $2-$cells of $\C$ which are boundaries of connected regions
of tetrahedra of $\C_{\T}$ is zero.
"\\
Constraint $Dual \ K_1$ (KCL) should read\\ 
``the net current $\phi_S$ away from a node of $\G_{region(\C)}$ 
is zero."
\\

Constraint $K_2$ (Generalized KCL) can be simplified to read\\
``the current adjusted
mmf vector $m_S$ 
is a $2-$cycle of $\C.$"

By Lemma \ref{lem:cycleregion} we know that $2-$cycles of $\C$ and $1-$coboundaries of $\G_{region(\C)}$
are identical. So\\
Constraint $Dual \ K_2$ (KVL) should read

``the vector $m_S$ is a $1-$coboundary of $\G_{region(\C)}.$"
\\

We defined an  electrical $2-$network $\N$ as a pair $(\C_S,\D_S),$
where\\
$\C_S$ is a $2-$complex embedded in $\mathbb{R}^3$ with $S$ as its $2-$cells,\\
$\D_S, $ the device characteristic, is a collection of ordered pairs $(\phi_S,m_S)$ of vectors on $S$ over $\mathbb{R}.$

Therefore, we may define the dual network $\N^d$ to $\N$ as the pair $(\G_{region(\C)}, \D_S).$\\
A solution of $\N^d$ 
is an ordered pair $(\phi_S,m_S)$
satisfying the following constraints
\begin{enumerate}
\item $\phi_S $ satisfies Constraint $Dual \ K_1.$
\item $m_S $ satisfies Constraint $Dual \ K_2.$
\item  $(\phi_S,m_S)\in \D_S.$
\end{enumerate}

Observe that $\phi_S $ satisfies Constraint $K_1$ iff it satisfies Constraint $Dual \ K_1$
and 
$m_S $ satisfies \\
Constraint $K_2$ iff it satisfies Constraint $Dual \ K_2.$
Thus $(\phi_S,m_S)$ is a solution of  $\N$ iff it is a solution of  $\N^d.$ 

Subsection \ref{subsec:cycleeqns} contains a description of cycle based
solution of $\N.$ This corresponds to coboundary based solution of $\N^d.$
In particular, when the device characteristic permits $\phi_S$ to be expressed as an affine function  of 
 $m_S$, one could use nodal analysis based on node potentials for $\N^d.$

It is of interest to know how results of the `standard' electrical networks
carry over to electrical $2-$networks.

We have already seen that Tellegen's Theorem carries through (Theorem \ref{gtellegen}). Ideas of topological network theory such as multiport decomposition and topological transformation
\cite{book}
go through essentially unchanged because we are able to build the dual nonplanar graph to the complex
and therefore the dual electrical network.

Kirchhoff's tree theorem \cite{seshu} states that the equivalent conductance seen across a pair of terminals $a,b$
of a network composed of resistors equals 
$$ \frac{\mbox{sum of all tree conductance products}}{{\mbox{sum of all 2-tree conductance products separating} \ a,b}}.$$
Here a 2-tree separating $a,b$ is a loop free set obtained from a tree by removing an edge so that
$a,b$ occur in different connected components of the tree and
a tree (2-tree) conductance product is the product of the conductances in a tree (2-tree) of the resistive network.
This theorem clearly 
has an analogue for the $2-$network $\N.$ 
Let exactly one of the triangles, say $\delta ,$  carry a current $i'$ around it consistent with the orientation.
After solving $\N^{d},$ we can obtain $f_{\delta}:= \phi_{\delta}= g i'.$ 
To compute $g$ by Kirchhoff's tree formula, note that the triangle $\delta ,$ and the current $i'$ translate 
in the dual network to an edge $e_{\delta}$ composed of a resistor (of resistance value $=$ reluctance of triangle) in series with a  voltage
source of value $E:=i'$ (see Figure \ref{fig:primaldual}). 
\begin{figure}
 \label{fig:primaldual}
\centering
 \includegraphics[width=7in]{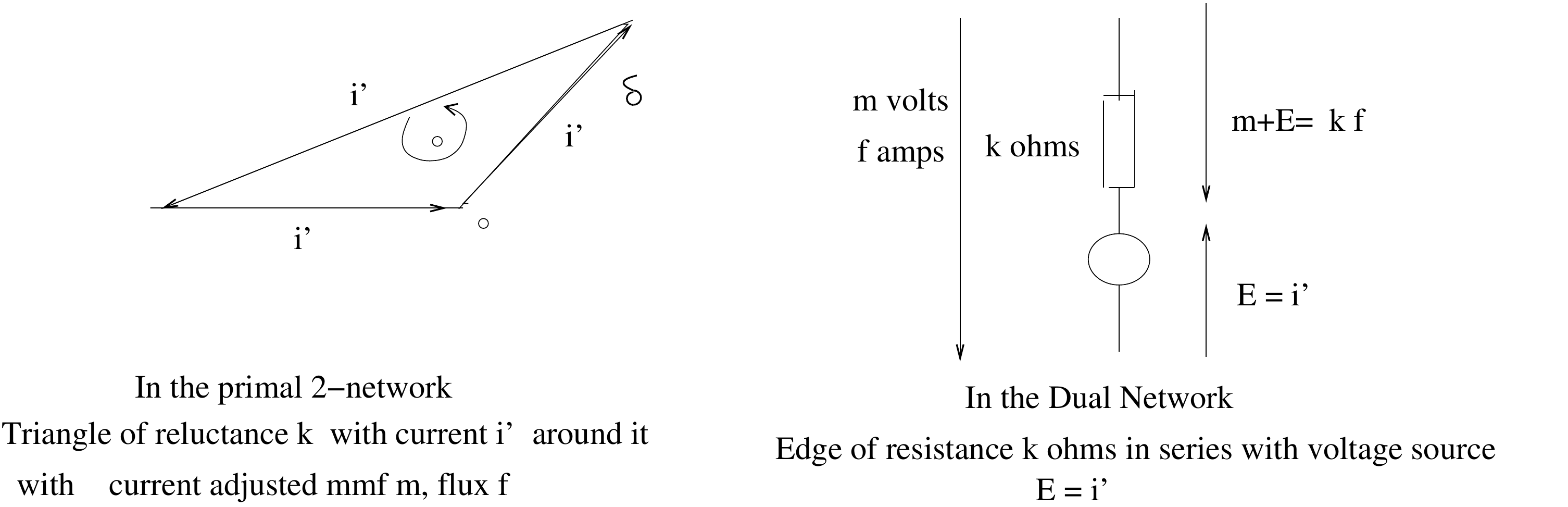}
 \caption{Triangle in a $2-$ network and the corresponding edge in the dual network}
 %\label{fig:triangles_edge}
\end{figure}

In the dual network, after solution, we have the relation, current $f := f_{\delta} = g E.$
The quantity $g$ is the equivalent conductance as seen by the source $E$ when no other source is present.   
We can write an expression for the inverse reluctance $g$ using Kirchhoff's tree formula for the dual network.
This can be translated to an expression involving the cobases of $\M(\C)$ and subcobases
which become a cobase when  $\delta$ is added.
Let us call the latter `$\delta-$ friendly'  subcobases.
Then the equivalent  inverse reluctance seen by the current loop $i'$,
would be
$$ \frac{\mbox{sum of all cobase }
(reluctance)^{-1} \mbox{products}}{\mbox{sum of all}\   \delta- \mbox{friendly  subcobase }
 (reluctance)^{-1} \mbox{products}}
.$$

\section{Generalizations}
\label{sec:gen}
%It is of interest to know how results of the `standard' electrical networks
%carry over to electrical $2-$networks and also how the ideas of the present paper generalize.
%
%We have already seen that Tellegen's Theorem carries through (Theorem \ref{gtellegen}).
%Ideas of topological network theory such as multiport decomposition and topological transformation
%go through essentially unchanged because we are able to build the dual nonplanar graph to the complex
%and therefore the dual electrical network.
We discuss in this section how the ideas of this paper go through if we have an $(n-1)-$complex $\C$ embedded in
$\mathbb{R}^n.$

%The ideas of this paper go through essentially unchanged if we have an $(n-1)-$complex $\C$ embedded in 
%$\mathbb{R}^n.$  
%We will then have  the $(n-1)-$coboundary space of $\C$ as regular.
%The electrical $(n-1)-$circuit would be defined essentially identically,
%with the constraints $\K_1,\K_2$ modified by replacing $2-$cycle and $2-$coboundary spaces 
%by $(n-1)-$cycle and $(n-1)-$coboundary spaces
%respectively. The generalized Tellegen's Theorem has an identical proof replacing $2$ by $(n-1)$ at appropriate
%places. The $(n-1)-$dimensional analogue of the notion of triangles coming together at an edge requires some careful handling.
%But finally we will have a matroid dual which is the region graph.
%This region graph can again be constructed by an analogue of the sliding algorithm.

%\subsection{Sketch of the $(n-1)-$complex solution}
%\label{subsec:sketch}
As before, we will assume that there is a large
$n-$simplex $\T,$ whose interior contains $\C.$
The $n-$simplex
$\T$ is decomposed into a set of $n-$simplices  whose interiors do not intersect and which together with their faces
constitute the $n-$complex $\C_{\T} .$
We assume $\C$ to be a subcomplex of $\C_{\T} .$
Next, because $\T$ is contractible we have,
by Theorem \ref{Hurewicz}, that for $0<j\leq n,$ every $(j-1)-$cycle
of the complex is the boundary of a $j-$chain.
When $n>3,$ we have to consider two cases, $j=n$ and $j<n.$

%\subsection{Sketch of the $(n-1)-$complex solution}
%\label{subsec:sketch}
The ideas of this paper go through essentially unchanged for the $j=n$ case.

%if we have an $(n-1)-$complex $\C$ embedded in
%$\mathbb{R}^n.$
We will then have  the $(n-1)-$coboundary space of $\C$ as regular.
The electrical $(n-1)-$circuit would be defined essentially identically,
with the constraints $K_1,K_2$ modified by replacing $2-$cycle and $2-$coboundary spaces
by $(n-1)-$cycle and $(n-1)-$coboundary spaces
respectively. The generalized Tellegen's Theorem has an identical proof replacing $2$ by $(n-1)$ at appropriate
places. The $(n-1)-$dimensional analogue of the notion of triangles coming together at an edge requires some careful handling.
But finally we will have a matroid dual which is the region graph.
This region graph can again be constructed by an analogue of the sliding algorithm.
\subsection{Sketch of the $(n-1)-$complex solution}
\label{subsec:sketch}

%The $j=n$ case is the exact analogue of the problem considered in this paper.
The analogue of an edge is an $(n-2)-$dimensional simplex.
The collection of triangles coming together at an edge corresponds to
a collection $\Sigma$ of $(n-1)-$dimensional simplices.
We describe a general way of reducing this problem to line segments 
coming together at a node which will also work in the case considered in this paper.

Consider an $(n-2)-$dimensional simplex $e$ contained in $\mathbb{R}^n.$
 Suppose the barycentre of $e$ is the origin. Let $\V$ be
the two dimensional vector space that 
is the orthogonal complement of the linear span of  $e.$ Let
$e$ be a face of each member of a collection  $\Sigma$ of $(n-1)-$dimensional simplices $\delta_1, \cdots , \delta_k.$ 
Consider $\V$ intersected with the union of all the
simplices in $\Sigma$. This set is the union of line segments $l_{\delta_i}$ having the
origin (representing $e$) as one endpoint, as can be seen from dimensional considerations.
Let all the simplices $\delta_i$ be oriented so that $e$ appears
with a positive sign in $\partial(\delta_i).$
We could take our convention as clockwise rotation around the origin  for fixing 
the order in which the $l_{\delta_i}$  are encountered. 
The vertices $v_-(\l_{\delta_i}), v_+(\l_{\delta_i})$ are 
to either side of $l_{\delta_i}$ so that as we move clockwise about the origin, 
we encounter $v_-(\l_{\delta_i})$ before crossing $l_{\delta_i}$
 to encounter $v_+(\l_{\delta_i}).$

The construction of $tag(\C), \G_{comptag(\C)}, \G_{region(\C)}$ is as in the $2-$complex case
and it will turn out that $\G_{comptag(\C)}= \G_{region(\C)}$ when $\C$ is connected
and that the $(n-1)-$cycle space of $\C$ is the row space of the incidence matrix of
$\G_{comptag(\C)}.$

So in this case of an $(n-1)-$complex $\C$ embedded in
$\mathbb{R}^n,$ the electrical $(n-1)-$network problem can be solved as a graph  based electrical network problem.

\subsection{$j<n$ case}
In this case $\C$ is a $(j-1)-$complex.
Constraint $K_1$ would read ``the flux vector $\phi_S$ is orthogonal to $(j-1)-$cycles of $\C$ which are boundaries of $j-$chains of $\C_{\T}.$" Because of contractibility of $\T$ in $\mathbb{R}^n,$ 
$(j-1)-$cycles of $\C$  would reduce to boundaries of $j-$chains of $\C_{\T}.$
However,
these $j-$chains cannot be regarded as regions of $\C_{\T}.$ 
There would still be a non trivial generalized Tellegen's Theorem stating that the space of $\phi_S$ 
and the space of $m_S$ are complementary orthogonal.
Complete unimodularity would not hold for the $(j-1)-$ coboundary or cycle space.  
Nor can we use ideas like region graph. The computation of a maximal linearly independent set of rows
of the $(j-1)-$coboundary matrix will involve linear algebraic computations (see Subsection \ref{subsec:solution}).
\section{Conclusion}
\label{sec:conclusion}
In this paper we discussed a generalization of Kirchhoff's laws on graphs, to the case of $2-$complexes.
We were led to this generalization while attempting to solve a physical problem 
involving fluxes and mmfs in Euclidean space. We showed that there was a non trivial generalization
of Tellegen's Theorem on the complementary orthogonality of voltage and current spaces,
 to that of flux and mmf spaces in the new context. This helped 
us to define an electrical $2-$network on a $2-$complex $\C$ and discuss procedures for its solution. 
We gave linear time algorithms for building auxiliary graphs
 $tag(\C)$ and $\G_{comptag(\C)}.$ When $\C$ was connected, we showed that $\G_{comptag(\C)}$ was the matroid dual of $\C,$ i.e., that the column matroid on the incidence matrix of $\G_{comptag(\C)}$ was dual to the column matroid of 
the $2-$coboundary matrix of $\C.$ Using this duality, we showed how to build in linear time, 
a graph based dual network to the 
 given $2-$network, whose solutions were identical to solutions of the latter after appropriate renaming of variables.
We inferred that preprocessing for the new
class of networks for  parallelization,  
using methods such as multiport decomposition and topological transformation, 
was as easy as it was for graph based networks.
Finally, we discussed generalizations of networks on $2-$complexes embedded in
$\mathbb{R}^3$, first to  $(n-1)-$complexes and then to $j-$complexes, $0<j<n-1,$ embedded in $\mathbb{R}^n.$ 

\section*{Acknowledgements}
\thispagestyle{empty}

The authors  would like to acknowledge helpful discussions with Arvind Nair.

%\clearpage

\bibliographystyle{elsarticle1-num}

\bibliography{references}

\end{document}